\documentclass[10pt,reqno]{amsart}

\usepackage[margin=1in]{geometry}
\usepackage{amsmath,amssymb,amsthm,mathtools}
\usepackage{mathrsfs}
\usepackage{enumitem}
\usepackage{microtype}
\usepackage{graphicx}
\usepackage{float}
\usepackage{tikz}
\usetikzlibrary{arrows.meta,positioning}
\numberwithin{equation}{section}
\usepackage{hyperref}

\usepackage{sidecap}

\newtheorem{theorem}{Theorem}[section]
\newtheorem{proposition}{Proposition}[section]
\newtheorem{lemma}{Lemma}[section]
\newtheorem{corollary}{Corollary}[section]
\newtheorem{remark}{Remark}[section]
\theoremstyle{definition}
\newtheorem{definition}{Definition}[section]

\newcommand{\R}{\mathbb R}
\newcommand{\dd}{\mathrm d}
\newcommand{\dx}{\mathrm{d}x}

\newcommand{\dt}{\mathrm{d}t}
\newcommand{\ds}{\mathrm{d}s}
\newcommand{\dy}{\mathrm{d}y}

\newcommand{\dv}{\textnormal{d}v}

\newcommand{\e}{\varepsilon}
\newcommand{\one}{\mathbf 1}
\newcommand{\p}{\partial}
\newcommand{\la}{\langle}
\newcommand{\ra}{\rangle}
\newcommand{\ov}{\overline}

\newcommand{\tnd}{\textnormal{d}}
\newcommand{\ddt}{\frac{\textnormal{d}}{\textnormal{d}t}}
\newcommand{\ve}{\varepsilon}
\newcommand{\weakto}{\rightharpoonup}
\newcommand{\weakstarto}{\overset{\star}{\rightharpoonup}}
\newcommand{\bq}{\begin{equation}}
\newcommand{\eq}{\end{equation}}
\newcommand{\lt}{\left}
\newcommand{\rt}{\right}

\newcommand{\calD}{\mathcal D}

\newcommand{\calG}{\mathcal G}

\newcommand{\calM}{\mathcal M}

\newcommand{\calO}{\mathcal O}

\newcommand{\calU}{\mathcal U}
\newcommand{\calV}{\mathcal V}

\newcommand{\frkB}{\mathfrak B}

\newcommand{\frkM}{\mathfrak M}

\newcommand{\frkQ}{\mathfrak Q}

\newcommand{\scrA}{\mathscr A}
\newcommand{\scrB}{\mathscr B}

\newcommand{\scrD}{\mathscr D}
\newcommand{\scrE}{\mathscr E}

\newcommand{\scrH}{\mathscr H}

\newcommand{\scrQ}{\mathscr Q}

\newcommand{\scrV}{\mathscr V}

\newcommand{\frka}{\mathfrak a}
\newcommand{\frkb}{\mathfrak b}

\newcommand{\frkl}{\mathfrak l}

\newcommand{\frkr}{\mathfrak r}

\newcommand{\bbN}{\mathbb N}

\usepackage[colorinlistoftodos]{todonotes}

\usepackage{cancel}
\usepackage{soul}

\title[BGK equation with physical collision frequency in a slab]{Global large-data solutions for the BGK equation with physical collision frequency in a slab}

\author[Choi]{Young-Pil Choi}
\address[Young-Pil Choi]{\newline Department of Mathematics\newline
Yonsei University, 50 Yonsei-Ro, Seodaemun-Gu, Seoul 03722, Republic of Korea}
\email{ypchoi@yonsei.ac.kr}

 \author[Song]{Sihyun Song}
 \address[Sihyun Song]{\newline Department of Mathematics\newline
 Yonsei University, 50 Yonsei-Ro, Seodaemun-Gu, Seoul 03722, Republic of Korea}
 \email{ssong@yonsei.ac.kr}

\begin{document}

\allowdisplaybreaks

\date{\today}

\keywords{BGK equation, density-dependent collision frequency, global existence, large data, slab domain, inflow boundary condition, Maxwell boundary condition}

\begin{abstract}
We establish a global large-data existence theory for the Bhatnagar--Gross--Krook equation in a slab with the physical collision frequency given by the local density. We consider both inflow and Maxwell boundary conditions and assume only finite mass, kinetic energy, and entropy of the initial data, together with the corresponding inflow flux bounds. The data need not be small or pointwise bounded, and we impose neither positive lower bounds on the density or temperature nor prescribed velocity moments above order two. To our knowledge, this is the first nonperturbative global existence result for this initial and boundary value problem under these assumptions. The central obstacle is the simultaneous loss of energy compactness and integrability of the unrenormalized collision operator. Finite energy alone does not give uniform integrability of the second velocity moment, while the density dependent collision frequency prevents separate local $L^1$ control of the gain and loss terms. We overcome these difficulties through a compensated estimate uniform in the truncation of the collision frequency, a de la Vall\'ee Poussin weight determined by the data, and two successive renormalizations using entropy production. This yields the compactness of the macroscopic energy, local Maxwellians, and nonlinear collision term, and hence global renormalized solutions.
\end{abstract}

\maketitle

\tableofcontents

%
%
%
%
%
%
%
%
%
%
%

\section{Introduction}

%
%
%
%
%
%
%
%
%
%
%

\subsection{The BGK equation in a slab}

The BGK model, introduced by Bhatnagar, Gross, and Krook \cite{BGK54}, replaces the Boltzmann collision operator by relaxation towards a local Maxwellian while preserving the basic collision invariants. The choice of collision frequency is analytically decisive. A constant frequency leads to a substantially milder relaxation problem, whereas the physical choice $\nu=\rho_f$ makes the collision rate itself depend on the unknown solution and restores the quadratic density scaling associated with binary collisions. Our aim is to develop a global large-data theory for this physical collision frequency under only the natural mass, energy, and entropy conditions, allowing vacuum and without imposing any prescribed velocity moment above the energy.

We consider one spatial dimension and three-dimensional velocities. More precisely, let
\[
\calO=I\times\R^3, \quad I=\lt(-\frac12,\frac12\rt),
\]
and write $(x,v)=(x,v_1,v_2,v_3)\in\calO$. The equation takes the form
\bq\label{eq/ bgk}
\begin{cases}
\p_t f+v_1\p_x f=\rho_f(M[f]-f ), & (t,x,v)\in(0,\infty)\times\calO,\\
f(0,x,v)=f_0(x,v).
\end{cases}
\eq
Here the density, bulk velocity, and temperature are defined by
\[
\rho_f=\int_{\R^3}f\,\dv, \quad \rho_f u_f=\int_{\R^3}v f\,\dv, \quad 3\rho_f\theta_f =\int_{\R^3}|v-u_f|^2f\,\dv,
\]
and the associated local Maxwellian is
\[
M[f] =\frac{\rho_f}{(2\pi\theta_f)^{\frac32}} \exp\lt(-\frac{|v-u_f|^2}{2\theta_f}\rt).
\]
On the vacuum set $\{\rho_f=0\}$, we set $u_f=\theta_f=0$ and $M[f]=0$.

By construction, $M[f]$ has the same mass, momentum, and energy as $f$:
\bq\label{eq/ M mom}
\int_{\R^3} (M[f]-f )
\begin{pmatrix}
1\\
v\\
|v|^2
\end{pmatrix}
\dv=0.
\eq
Thus the relaxation operator preserves the collision invariants locally. The corresponding global balance laws depend on the boundary condition imposed at $x=\pm\frac12$, which will be specified below.

%
%
%
%
%
%
%
%
%
%
%

\subsection{The physical collision frequency and the Boltzmann closure}

The appearance of the local density $\rho_f$ in the collision frequency can be understood directly from the gain--loss structure of the Boltzmann collision operator. At a fixed space-time point, write
\[
Q_B(f,f)=Q_B^+(f,f)-\nu_f(v)f(v), \quad \nu_f(v) = \iint_{\R^3 \times \mathbb S^2} B(v-v_*,\sigma)f(v_*)\,\dd\sigma \dv_*.
\]
For cutoff Maxwell molecules,
\[
B(z,\sigma)=b(\cos\theta), \quad \cos\theta=\frac{z\cdot\sigma}{|z|},
\]
and the angular integral
\[
\kappa:=\int_{\mathbb S^2}b(e\cdot\sigma)\,\dd\sigma
\]
is independent of $e\in\mathbb S^2$. Hence
\[
\nu_f(v)=\kappa\rho_f, \quad Q_B^-(f,f)=\kappa\rho_f f.
\]

The classical BGK approximation may then be viewed as a Maxwellian closure of the gain term. Indeed, for each collision invariant $\psi(v)\in\{1,v,|v|^2\}$,
\[
\int_{\R^3}\psi(v)Q_B^+(f,f)\,\dv = \kappa\rho_f\int_{\R^3}\psi(v)f(v)\,\dv.
\]
Since the local Maxwellian $M[f]$ has the same mass, momentum, and energy as $f$, the Maxwellian gain $\kappa\rho_f M[f]$ has exactly the same collision-invariant moments as $Q_B^+(f,f)$. Replacing the Boltzmann gain by this local equilibrium distribution thus gives
\[
Q_B^+(f,f)\ \rightsquigarrow\ \kappa\rho_f M[f], \quad Q_{\rm BGK}(f) =\kappa\rho_f(M[f]-f ).
\]
After the normalization $\kappa=1$, this is precisely the collision operator in \eqref{eq/ bgk}. This is a relaxation closure of the Boltzmann collision operator rather than a rigorous scaling limit; see \cite{BGK54,BHP21,BPS23}.

The density factor is thus not merely an additional variable coefficient. It preserves the quadratic density homogeneity of binary collisions: for every $a>0$,
\[
M[af]=aM[f], \quad Q_{\rm BGK}(af)=a^2Q_{\rm BGK}(f).
\]
In contrast, a constant-frequency BGK operator has only linear homogeneity in the density. The simple choice $\nu_f=\rho_f$ is specific to the Maxwell-molecule setting; more general Boltzmann kernels lead to velocity- and temperature-dependent collision frequencies.
\subsection{Previous results and the large-data problem}

The mathematical theory of BGK equations has a long history. Early existence results and continuity properties of the local Maxwellian were established in \cite{PG83,GP89}. For the constant-frequency Cauchy problem, global solutions were constructed in \cite{Per89} for data with finite mass, energy, and entropy, together with a spatial second-moment condition. A key ingredient was a local space-time third-velocity-moment estimate generated by free-transport dispersion, rather than imposed on the initial data, which provided the compactness needed to identify the temperature. The corresponding Dirichlet boundary-value problem was studied in \cite{PD91}. Weighted $L^\infty$ estimates, uniqueness, and further Cauchy theories were subsequently developed in \cite{PP93,Mis96,ZH07,CZ16}.

The physical collision frequency $\nu=\rho_f$ leads to a substantially different large-data problem. The distinction between the original density dependent rate and a constant rate was emphasized in the particle approximation literature \cite{BHP21,BP23}; in particular, \cite{BP23} points out the lack of a constructive existence and uniqueness theory for the former. More recently, \cite[Section 1.1]{EMY26} explicitly identifies well-posedness for the original self-consistent model with $\lambda(\rho)=\rho$ as an open problem. Theorem \ref{thm1} below addresses the global existence part of this problem in a slab with inflow or Maxwell reflection; uniqueness and more general collision frequencies are outside the scope of the result. The difficulty is not merely the presence of an unbounded coefficient: $\rho_f$ multiplies a local Maxwellian whose density, velocity, and temperature are themselves determined by the solution, and the available mass and energy bounds do not directly provide the compactness needed to pass to this nonlinear collision term. In particular, it appears that the aforementioned third-velocity-moment method does not apply in this setting; those arguments rely on the fact that the collision operator of \eqref{eq/ bgk} has bounded second moments, and this is unclear in the setting where the collision frequency is given by the local density. Thus, the compactness of the temperature cannot be obtained in this way.

Under additional structural or perturbative assumptions, however, several theories for nonconstant collision frequencies are available. Near-equilibrium results were obtained in \cite{Bel03,Yun10}. The large-amplitude theory of \cite{BKLY26} allows substantial pointwise perturbations, but still relies on weighted $L^\infty$ control, a positive lower bound along free transport, matching conserved quantities, and a small relative entropy. A recent whole-space theory near vacuum treats more general collision frequencies by combining weighted smallness with dispersive estimates \cite{PY26}. These regimes are fundamentally different from an existence theory for arbitrary data with finite energy and entropy.

The boundary theory in the large-data regime is more limited. Stationary BGK problems on an interval or slab, including large boundary data and nonequilibrium steady states, were studied in \cite{Uka92,EM21,BY16,BY24}. For evolutionary problems in bounded domains, global perturbative solutions with diffuse reflection were obtained in \cite{CKP26}, while local well-posedness near equilibrium with inflow boundary data was established in \cite{LZ26}. To the best of our knowledge, the available boundary results do not provide a global nonperturbative existence theory for the self-consistent BGK equation with collision frequency $\rho_f$ under only the natural assumptions of finite mass, kinetic energy, and entropy.

Some recent large-data results concern related, but different, BGK-type models. The density dependent equation in \cite{EMY26} relaxes towards a fixed Maxwellian and does not contain the full self-consistent mass--momentum--energy structure of \eqref{eq/ bgk}, while \cite{DJL24} concerns a kinetic relaxation model associated with hyperbolic conservation laws. These results do not address the compactness problem arising from the local Maxwellian in \eqref{eq/ bgk}.

The main new ingredient is the estimate in Section \ref{sec/ comp}, which controls the thermal energy weighted by the collision frequency with a constant independent of the frequency truncation. This estimate propagates a superlinear energy weight chosen from the data and yields compactness of the macroscopic fields even when the density and temperature may degenerate. Entropy production then provides the additional control needed to recover the density factor in the renormalized collision term.

%
%
%
%
%
%
%
%
%
%
%

\subsection{Main results}

We now state our main result. We use the notation $\la v\ra := (1+|v|^2)^{\frac12}$, and adopt the convention $0|\log 0|=0$. The initial datum is assumed to satisfy
\bq\label{eq/ data}
f_0\ge 0,\quad \iint_{\calO} f_0(x,v) (\la v\ra^2+|\log f_0(x,v)| )\,\dx\dv<\infty.
\eq
Thus only finite mass, kinetic energy, and entropy are required. The datum may be arbitrarily large, may contain vacuum, and need not satisfy an $L^p$ bound with $p>1$ or any velocity moment condition above order two.
 
Since the spatial domain $I$ is bounded, an additional spatial moment assumption is not needed. In the inflow problem, the corresponding integrability conditions are imposed on the incoming flux with respect to the natural boundary measure $|v_1|\,\dt\dv$.

We consider the following two boundary conditions.

\begin{enumerate}
\item[\textbf{(In)}]
The inflow boundary condition. Given
\[
g_l:[0,\infty)\times\R^3_+\to[0,\infty), \quad g_r:[0,\infty)\times\R^3_-\to[0,\infty),
\]
where $\R^3_\pm:=\{v\in\R^3:\pm v_1>0\}$, we assume
\bq\label{eq/ in data}
\begin{aligned}
&\iint_{(0,\infty)\times\{v_1>0\}} g_l(t,v)(\la v\ra^2+|\log g_l(t,v)| ) v_1\,\dv\dt<\infty,\\
&\iint_{(0,\infty)\times\{v_1<0\}} g_r(t,v)(\la v\ra^2+|\log g_r(t,v)| ) |v_1|\,\dv\dt<\infty.
\end{aligned}
\eq
The traces
\[
f_l:=f|_{x=-\frac12}, \quad f_r:=f|_{x=\frac12}
\]
are required to satisfy
\bq\label{eq/ inflow}
\begin{cases}
f_l(t,v)=g_l(t,v), & v_1>0,\\
f_r(t,v)=g_r(t,v), & v_1<0.
\end{cases}
\eq

\item[\textbf{(Max)}]
The Maxwell boundary condition. We define the outgoing mass fluxes by
\[
J_l(t):=\int_{v_1<0}f_l(t,v)|v_1|\,\dv, \quad J_r(t):=\int_{v_1>0}f_r(t,v)v_1\,\dv.
\]
Let $\frkM(v)$ be a stationary Maxwellian centered at the origin and normalized by
\bq\label{eq/ M norm}
\int_{v_1>0}\frkM(v)v_1\,\dv = \int_{v_1<0}\frkM(v)|v_1|\,\dv =1.
\eq
For accommodation coefficients $\alpha_l,\alpha_r\in(0,1]$, we impose
\bq\label{eq/ maxwell}
\begin{cases}
f_l(t,v) =(1-\alpha_l)f_l(t,\scrV v) +\alpha_l\frkM(v)J_l(t), & v_1>0,\\[4pt]
f_r(t,v) =(1-\alpha_r)f_r(t,\scrV v) +\alpha_r\frkM(v)J_r(t), & v_1<0,
\end{cases}
\eq
where
\[
\scrV(v_1,v_2,v_3):=(-v_1,v_2,v_3).
\]
\end{enumerate}

\begin{remark}
If one of $\alpha_l,\alpha_r$ vanishes, reflection across the corresponding specular boundary reduces the problem to a doubled slab with positive accommodation at the remaining boundaries. If both vanish, the reflected extension becomes periodic. In fact, even without such a reflection procedure, a purely specular boundary may be handled in a straightforward manner by following for instance \cite{Mis00}. For clarity, the results below are stated for $\alpha_l,\alpha_r>0$.
\end{remark}

Throughout the paper, boundary traces are understood in the sense of the Green formula \eqref{eq/ green} below. They agree with the transport traces in \cite{Mis00,Mis10}; values on the grazing set $\{v_1=0\}$ are immaterial.

To formulate the notion of solution, for a subset $X\subset\R^d$ we denote by $\calD(X)$ the space of smooth compactly supported test functions on $X$, and by $\calD'(X)$ its dual. We also set
\[
\frkB:= \lt\{ \beta\in C^1([0,\infty))\,:\, \beta(0)=0,\, |\beta(z)|\le C_\beta(1+z),\, |\beta'(z)|\le\frac{C_\beta}{1+z} \text{ for some }C_\beta>0 \rt\}.
\]

\begin{definition}
Let $\scrQ$ be a given source term. A nonnegative function $f\in L^\infty((0,\infty);L^1(\calO))$ is called a \emph{renormalized solution} of
\[
\begin{cases}
\p_t f+v_1\p_x f=\scrQ &\text{in }(0,\infty)\times\calO,\\
f|_{t=0}=f_0,
\end{cases}
\]
with boundary condition \textbf{(In)} or \textbf{(Max)} if there exist trace functions $f_l,f_r$ satisfying \eqref{eq/ inflow} or \eqref{eq/ maxwell} almost everywhere, respectively, such that for every $t>0$, $\varphi\in\calD([0,t]\times\ov{\calO})$, and $\beta\in\frkB$,
\bq\label{eq/ green}
\begin{aligned}
&\iint_{\calO}\beta(f(t))\varphi(t)\,\dx\dv -\iint_{\calO}\beta(f_0)\varphi(0)\,\dx\dv\\
&\quad +\int_0^t\int_{\R^3} v_1\beta(f_r(s,v))\varphi(s,\frac12,v)\,\dv\ds  -\int_0^t\int_{\R^3} v_1\beta(f_l(s,v))\varphi(s,-\frac12,v)\,\dv\ds\\
&\quad \quad = \int_0^t\iint_{\calO} \lt[ \beta(f)(\p_s+v_1\p_x)\varphi +\beta'(f)\scrQ\,\varphi \rt]\dx\dv\ds,
\end{aligned}
\eq
and all the terms in \eqref{eq/ green} are well defined.
\end{definition}

If \eqref{eq/ green} holds with $\beta(z)=z$, we simply call $f$ a \emph{weak solution}.

\begin{theorem}\label{thm1}
Assume that $f_0$ satisfies \eqref{eq/ data}. Consider either the inflow condition \textbf{(In)} with \eqref{eq/ in data}, or the Maxwell condition \textbf{(Max)} with $\alpha_l,\alpha_r>0$. Then there exists a nonnegative function
\[
f\in  C([0,\infty);L^1(\calO,\la v \ra^2 \dx\dv)) 
\]
with traces satisfying the prescribed boundary condition almost everywhere and such that for every $\beta\in\frkB$:
\[
\begin{cases}
\p_t\beta(f)+v_1\p_x\beta(f) = \beta'(f)\rho_f(M[f]-f ) &\text{in }\calD'((0,\infty)\times\calO),\\
\beta(f)|_{t=0}=\beta(f_0).
\end{cases}
\]
In particular, $f$ is a global renormalized solution of \eqref{eq/ bgk}. It also has the following properties. 
\begin{enumerate}
\item[\textbf{(In)}] The solution satisfies the Boltzmann entropy inequality \eqref{eq/ ent in}. Moreover, for every $t\ge0$ and $\psi\in\{1,v_1,v_2,v_3,|v|^2\}$,
\[
\iint_{\calO}\psi(v)f(t)\,\dx\dv+\int_0^t\int_{\R^3}\psi(v)v_1(f_r-f_l)\,\dv\ds=\iint_{\calO}\psi(v)f_0\,\dx\dv.
\]
\item[\textbf{(Max)}] The total mass is conserved:
\[
\iint_{\calO}f(t)\,\dx\dv=\iint_{\calO}f_0\,\dx\dv,\quad t\ge0.
\]
The solution satisfies the relative entropy inequality \eqref{eq/ ent max}, including the collision dissipation and the Darroz\`es--Guiraud boundary dissipation.
\end{enumerate}
Both entropy inequalities hold for every $t\ge0$. These properties are proved in Proposition \ref{prop/ bal}.
\end{theorem}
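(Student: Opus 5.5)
The plan is to construct $f$ as a limit of solutions to approximate problems in which the collision frequency is truncated and the data are regularized, and then to pass to the limit using uniform energy and entropy bounds together with compactness arguments. For $n\in\bbN$ consider
\[
\p_t f^n+v_1\p_x f^n=\nu_n(\rho_{f^n})\bigl(M[f^n]-f^n\bigr),\qquad \nu_n(\rho)=\min\{\rho,n\},
\]
with $f_0$, $g_l$, $g_r$ replaced by bounded, compactly supported approximations $f_0^n$, $g_l^n$, $g_r^n$ that converge in the weighted $L^1$ spaces of \eqref{eq/ data}, \eqref{eq/ in data}. For fixed $n$ the collision frequency is bounded, so the arguments of \cite{Per89,PD91} apply: a fixed point for the mild formulation (for \textbf{(Max)} via iteration in the boundary flux, following \cite{Mis00}), combined with the continuity of $f\mapsto M[f]$ from \cite{PG83,GP89}, gives a solution $f^n$. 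The boundedness of $\nu_n$ is exactly what allows Perthame's local third-moment argument to be used at this level.

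Next we derive bounds uniform in $n$. Testing with $1$, $v$, $|v|^2$ and $\log f^n$ and using \eqref{eq/ M mom} gives bounds on mass, energy and entropy. For \textbf{(In)} the boundary terms are controlled by the incoming flux \eqref{eq/ in data}. For \textbf{(Max)} we use the relative entropy with respect to $\frkM$, the Darroz\`es--Guiraud inequality, and $\alpha_l,\alpha_r>0$ to control the outgoing trace fluxes and the energy. These estimates also give the entropy production bound
\[
\int_0^T\!\!\iint \nu_n(\rho_{f^n})\bigl(M[f^n]-f^n\bigr)\log\frac{M[f^n]}{f^n}\le C.
\]
Equi-integrability of $f^n$ in $(x,v)$ then follows from the entropy bound via Dunford--Pettis, and velocity averaging for the renormalized quantities $\beta(f^n)$ gives strong $L^1_{loc}$ compactness of $\rho_{f^n}$ and $\rho_{f^n}u_{f^n}$.

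The main obstacle is compactness of the energy $\int|v|^2 f^n\,\dv$, which is needed to identify $\theta_f$ and hence the limit of $M[f^n]$. Here we would invoke the compensated estimate of Section \ref{sec/ comp}. Choose a de la Vall\'ee Poussin function $\Phi$, superlinear and depending only on the data, with $\iint f_0\Phi(|v|^2)<\infty$, and propagate this weighted bound uniformly in $n$. The key point is that the collision term contributes, for convex $\Phi$ and the isotropic Maxwellian, at most a thermal energy weighted by $\nu_n$, and that estimate controls this with a constant independent of $n$. This yields uniform integrability of $|v|^2 f^n$, so averaging lemmas give strong convergence of the energy density and thus $M[f^n]\to M[f]$ a.e. and in $L^1_{loc}$ on $\{\rho_f>0\}$, with vacuum handled by $M[f^n]\le$ the mass bound.

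To pass to the limit in the collision term we renormalize twice with $\beta(z)=\log(1+z)$-type functions. The product $\beta'(f^n)\nu_n(\rho_{f^n})M[f^n]$ is not locally $L^1$ bounded separately, so we split by the size of $M/f$ and use the entropy production bound to control the region where $M[f^n]\gg f^n$. A first renormalization gives a limit equation with a defect. A second renormalization in $\delta$, which removes the factor $\rho$ via $\beta_\delta(f)=\log(1+f/\delta)$ applied to the equation divided by $1+\rho$, together with convexity/Egorov arguments, gives strong convergence $f^n\to f$ a.e., so the defect vanishes. Time continuity in $L^1(\la v\ra^2)$ follows from the equation and the uniform weighted integrability. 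Trace convergence and the boundary conditions follow from the Green formula and weak compactness of traces (\cite{Mis00,Mis10}). The conservation laws, with the $|v|^2$ identity for \textbf{(In)} relying on uniform integrability, and the entropy inequalities follow by lower semicontinuity; these are collected in Proposition \ref{prop/ bal}.
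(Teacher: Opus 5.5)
Your overall architecture is the paper's: bounded rates with $0\le\nu_n(z)\le z$, uniform energy/entropy/dissipation bounds, the slab compensated estimate giving $\int_0^T\int_I\nu_n(\rho)\rho\theta\le C_T$, a propagated data-dependent weight, velocity averaging plus energy tails, and the entropy-production splitting. But two steps fail as you state them.

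\textbf{The weight propagation.} It is false that for a general convex $\Phi$ the collision term contributes at most $C\nu_n(\rho)\rho\theta$. That contribution is bounded by the Maxwellian Jensen gap $\int\Phi(\la v\ra^2)M\,\dv-\rho\,\Phi(X)$, with $X=1+|u|^2+3\theta$. For $\Phi(z)=z^2$ this gap equals $\rho(4\theta|u|^2+6\theta^2)$, which the compensated estimate does not control. You need the curvature bound $\Phi''(z)\le C/z$. Then $\Phi-Cz\log z$ is concave, and the gap is at most $\frac{C}{X}\int(\la v\ra^2-X)^2M\,\dv\le C\rho\theta$. So the de la Vall\'ee Poussin function must be constructed to have this property in addition to the data bounds; the paper takes $\Phi'(z)=\sum_k z/(a_k+z)$ with $a_{k+1}\ge 2a_k$. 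Also, Perthame's third-moment argument does not extend to the slab even at fixed $n$: dispersion acts only through $v_1$, so transverse velocities are not controlled. At fixed $n$ you should instead propagate the same weight using $\nu_n(\rho)\rho\theta\le\frac{n}{3}\int|v|^2f\,\dv$.

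\textbf{The collision limit.} This is the more serious gap. The sentence ``a second renormalization applied to the equation divided by $1+\rho$, together with convexity/Egorov arguments, gives $f^n\to f$ a.e.'' names no mechanism that produces a.e.\ convergence. In case \textbf{(Max)} such convergence is not available from the bounds at hand, because the diffuse part $\alpha\frkM J_n(t)$ of the incoming traces converges only in the biting sense.

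The paper never proves strong convergence of $f_\e$. Instead it proceeds as follows.
\begin{enumerate}
\item At fixed $\delta$ it passes to the limit in the equation for $\beta_\delta(f_\e)$, with $\beta_\delta(z)=z/(1+\delta z)$. The source becomes $\rho_f(\frka_\delta M[f]-\frkb_\delta)$, where $\frka_\delta$ and $\frkb_\delta$ are the weak-$\star$ limits of $\beta_\delta'(f_\e)$ and $f_\e\beta_\delta'(f_\e)$. The gain limit uses a.e.\ convergence of $\lambda_\e(\rho_{f_\e})M[f_\e]$ together with weak compactness from the entropy production.
\item It renormalizes this limit equation with $\log(1+z)$ and lets $\delta\to0$.
\item The entropy bound makes $\beta_\delta(f_\e)-f_\e$, $\beta_\delta'(f_\e)-1$ and $f_\e\beta_\delta'(f_\e)-f_\e$ small in $L^1$ uniformly in $\e$. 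Lower semicontinuity of the norm then gives $\frka_\delta\to1$ and $\frkb_\delta\to f$ strongly.
\item Vitali's theorem applies because $0\le\frkQ^-_\delta\le\rho_f$ and $\frkQ^+_\delta\le2\frkQ^-_\delta+\frac{2}{\log 2}F_\nu$. The latter bound is the absolutely continuous part of the weak limit of $Q^+_{\e,\delta}\le2Q^-_{\e,\delta}+\frac{2}{\log 2}D_\e$.
\end{enumerate}
In this route no defect has to be shown to vanish.

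\textbf{The Maxwell traces.} ``Weak compactness of traces'' is also unavailable for \textbf{(Max)}, since $J_{\e r}$ is only bounded in $L^1(0,T)$. You need two ingredients. First, the biting lemma for $J_{\e r}$. Second, the Darroz\`es--Guiraud bound, used through $F_{\e r}\le\frac{1}{L}h_{\rm rel}(F_{\e r}|J_{\e r})+\frac{e^L-1}{L}J_{\e r}$. Together these give weak compactness on good sets $\calG_k$. The limit is then taken in the bounded renormalized traces $\beta_\delta(f_{\e r})$.
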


\begin{remark}
For every $\beta\in\frkB$, the renormalized source term satisfies
\[
\beta'(f)\rho_f(M[f]-f) \in  L^1_{\rm loc}([0,\infty)\times \ov{\calO}) .
\]
Indeed, by splitting the domain into the regions $\{M[f]\ge 3f\}$ and $\{M[f]<3f\}$, the integral over the first region is controlled by the kinetic entropy dissipation in \eqref{eq/ diss def}. This yields
\[
\frac{\rho_f|M[f]-f|}{1+f}\in  L^1_{\rm loc}([0,\infty)\times\ov{\calO}),
\]
and the conclusion follows from $(1+f)|\beta'(f)|\le C_\beta$.
\end{remark}

%
%
%
%
%
%
%
%
%
%
%

\subsection{Main ideas of the proof}

The proof has two main compactness difficulties. First, finite kinetic energy does not by itself control the large-velocity tails uniformly along an approximation. This is important since velocity averaging gives compactness only for bounded velocity weights, while the temperature entering the local Maxwellian depends on the full kinetic energy. Second, the physical collision frequency introduces an additional factor of $\rho_f$; thus the gain and loss terms cannot be handled separately by a direct $L^1$ compactness argument. We address these difficulties by combining a compensated estimate in the slab with a tailored de la Vall\'ee Poussin argument and a two-stage renormalization procedure. The last step follows the classical compactness framework for renormalized kinetic equations \cite{DL89B,Lio94}.

In the arguments below, we approximate the physical collision frequency by the bounded rates
\[
\lambda_\e(z):=\frac{z}{1+\e z}.
\]
For each fixed $\e>0$, the boundedness of $\lambda_\e$ allows us to construct global weak solutions of the approximate problem and to justify the intermediate renormalization arguments; see Appendix \ref{app/ approx}. The key point is that the a priori estimates needed to pass to the limit $\e\to0$ are uniform in $\|\lambda_\e\|_{L^\infty}$. In particular, their essential structural input is the bound $0\le \lambda_\e(z)\le z$. 

\subsubsection{Compensated estimates in the slab}

We begin with the one-dimensional balance laws
\[
\p_t\rho+\p_xj_1=0, \quad \p_tj_1+\p_xe_1=0,
\]
where
\[
j_1=\int_{\R^3}v_1f\,\dv, \quad e_1=\int_{\R^3}v_1^2f\,\dv.
\]
Following the Bony-type approach \cite{Bon87,Cer92,BCP06,Wyn23} and the related compensated-integrability viewpoint \cite{Ser18}, we introduce
\[
\scrA(t,x) := \int_{-\frac12}^x\rho(t,y)\,\dy - \lt(x+\frac12\rt)\int_I\rho(t,y)\,\dy, \quad \scrB(t) := \int_I\scrA(t,x)j_1(t,x)\,\dx.
\]
The choice
\[
\scrA(t,-\frac12)=\scrA(t,\frac12)=0
\]
eliminates the momentum boundary terms, and investigating the evolution of $\scrB$ leads to the estimate
\[
\int_0^T\int_I (\rho e_1-j_1^2 ) \,\dx\dt \le C_T.
\]

This normal-direction estimate can be converted into control of the full temperature. Writing $\ov v=(v_2,v_3)$, $\ov u=(u_2,u_3)$, the isotropy of the local Maxwellian gives
\[
\int_{\R^3} |\ov v|^2(M[f]-f )\,\dv = \frac{\rho e_1-j_1^2}{\rho} - \rho\theta.
\]
Testing with $|\ov v|^2$ and using the boundary flux estimates gives, for the truncated collision frequencies
\[
\lambda_\e(z)=\frac{z}{1+\e z},
\]
the uniform bound
\bq\label{eq/ intro therm}
\sup_{\e>0} \int_0^T\int_I \lambda_\e(\rho_{f_\e}) \rho_{f_\e}\theta_{f_\e} \,\dx\dt \le C_T.
\eq

\subsubsection{Energy tails and macroscopic compactness}

In the whole-space theory for constant collision frequency, transport dispersion yields local space-time control of the third velocity moment within the finite-energy framework \cite{Per89}. This mechanism does not directly extend to the present slab, where the transport operator involves only $v_1$: large transverse velocities need not produce rapid motion towards the boundary.

We instead use \eqref{eq/ intro therm} to propagate a convex energy weight with constants independent of the frequency truncation. More precisely, we construct a convex function $\Phi$ satisfying
\[
\frac{\Phi(z)}z\xrightarrow[z\to\infty]{}\infty,\quad 0\le\Phi''(z)\le\frac Cz.
\]
The function is chosen from the tails of the approximate initial data and, in case \textbf{(In)}, from the incoming energy tails.

The curvature bound is compatible with the Gaussian structure of the local Maxwellian. If $X_\e = 1+|u_{f_\e}|^2+3\theta_{f_\e}$, then a direct Gaussian computation gives
\[
\int_{\R^3} (\la v\ra^2-X_\e)^2M[f_\e]\,\dv = \rho_{f_\e} ( 4\theta_{f_\e}|u_{f_\e}|^2 +6\theta_{f_\e}^2  ).
\]
Together with Jensen's inequality, this yields
\[
\int_{\R^3} \Phi(\la v\ra^2) (M[f_\e]-f_\e )\,\dv \le C\rho_{f_\e}\theta_{f_\e}.
\]
The bound \eqref{eq/ intro therm} then allows us to propagate the weighted energy:
\[
\sup_{\e>0} \operatorname*{ess\,sup}_{0<t<T} \iint_{\calO} \Phi(\la v\ra^2)f_\e(t)\,\dx\dv \le C_T,
\]
together with the corresponding outgoing boundary estimates. Since $\frac{\Phi(z)}z\to\infty$, we obtain
\bq\label{eq/ intro tail}
\lim_{R\to\infty} \sup_{\e>0} \operatorname*{ess\,sup}_{0<t<T} \iint_{I\times\{|v|>R\}} \la v\ra^2f_\e(t)\,\dx\dv =0.
\eq

Velocity averaging \cite{GLPS88,DLM91} gives compactness for bounded velocity averages. Combining it with \eqref{eq/ intro tail} yields strong convergence of the full second moment. The density, momentum, and temperature can then be identified, and
\[
M[f_\e] \longrightarrow  M[f] \quad\text{strongly in } L^1( (0,T)\times\calO, \la v\ra^2\,\dt\dx\dv  ).
\]

\subsubsection{Renormalized collision compactness}

The strong convergence of the local Maxwellians is not sufficient to pass directly to the physical collision term due to the additional density factor. We first use
\[
\beta_\delta(z) := \frac{z}{1+\delta z}
\]
and pass to the limit $\e\to0$ for fixed $\delta$. The corresponding renormalized gain and loss terms are treated separately.

The second renormalization is $\gamma(z):=\log(1+z)$. The entropy production
\[
D_\e[f_\e] := \lambda_\e(\rho_{f_\e}) (f_\e-M[f_\e] ) (\log f_\e-\log M[f_\e] )
\]
controls the gain through the elementary splitting
\[
Q_{\e,\delta}^+ \le 2Q_{\e,\delta}^- + \frac{2}{\log2}D_\e[f_\e].
\]
If
\[
D_\e[f_\e]\,\dt\dx\dv \weakstarto \nu = F_\nu\,\dt\dx\dv+\nu_s,
\]
the limiting renormalized collision terms satisfy
\[
0\le\frkQ_\delta^-\le\rho_f, \quad \frkQ_\delta^+ \le 2\frkQ_\delta^- + \frac{2}{\log2}F_\nu.
\]
This provides the local uniform integrability needed to let $\delta\to0$. In particular, the singular part of the limiting entropy-production measure need not be shown to vanish. We obtain
\[
\p_t\gamma(f)+v_1\p_x\gamma(f) = \gamma'(f)\rho_f(M[f]-f).
\]

\subsubsection{Boundary traces}

The interior compactness argument is common to the two boundary conditions, but their trace limits are different. For \textbf{(In)}, the outgoing energy and entropy estimates give weak $L^1$ compactness of the traces directly. For \textbf{(Max)}, the corresponding compactness follows from the Darroz\`es--Guiraud information through a biting argument, after which the incoming traces are recovered from the Maxwell boundary law; see \cite{Mis10}.

Once $\gamma(f)$ and its traces are identified, the remaining renormalizations follow by applying the transport chain rule \cite{DL89O,Mis10} to
\[
\Gamma(s)=\beta(e^s-1), \quad \beta\in\frkB.
\]
The same trace theory, together with the uniform energy and entropy bounds, gives
\[
f\in C([0,T];L^1(\calO)),
\]
and then interpolating with the energy tail estimate in \eqref{eq/ intro tail}, we get
\begin{equation*}
    f\in C([0,T];L^1(\calO,\la v \ra^2 \dx\dv)).
\end{equation*}

The rest of this paper is organized as follows. In Section \ref{sec/ comp}, we establish the compensated estimate and the thermal bound weighted by the collision frequency. Section \ref{sec/ tails} develops the uniform estimates on the kinetic energy tails. The interior limit and the inflow boundary argument are presented in Section \ref{sec/ inflow limit}, while the Maxwell boundary traces are treated in Section \ref{sec/ maxwell limit}. In Appendix \ref{app/ approx}, we construct solutions for each truncated collision frequency. Finally, Appendix \ref{app/ mild} derives the mild formulation along the last free flight segment.

%
%
%
%
%
%
%
%
%
%
%

\section{Compensated estimates in the slab}\label{sec/ comp}

We begin with the auxiliary BGK equation
\bq\label{eq/ lambda}
\begin{cases}
\p_t f+v_1\p_x f =\lambda(\rho_f)(M[f]-f ) &\text{in }(0,\infty)\times\calO,\\
f|_{t=0}=f_0.
\end{cases}
\eq
On the boundary, we impose either \textbf{(In)} or \textbf{(Max)}. Throughout this section, $\lambda:[0,\infty)\to[0,\infty)$ is assumed to be bounded and measurable unless otherwise stated. The boundedness of $\lambda$ is used to guarantee the weighted integrability of the collision term required in the weak formulation and in the velocity truncation arguments below. The additional structural condition $0\le\lambda(z)\le z$ will only be imposed when deriving the density weighted thermal estimate in Section \ref{subsec/ therm}.

This distinction is useful for the approximation introduced later. Indeed, the truncated collision frequencies
\[
\lambda_\e(z):=\frac{z}{1+\e z}
\]
are bounded for every fixed $\e>0$ and satisfy
\[
0\le\lambda_\e(z)\le z, \quad \|\lambda_\e\|_{L^\infty}=\frac1\e.
\]
The boundary and compensated estimates established below are independent of $\|\lambda\|_{L^\infty}$. This uniformity is the essential point: Proposition \ref{prop/ rho2} below controls $\lambda_\e(\rho_{f_\e})\rho_{f_\e}\theta_{f_\e}$ even as $\|\lambda_\e\|_{L^\infty}\to\infty$, and supplies the estimate used to propagate the energy weight in Section \ref{sec/ tails}.

For brevity, we write
\[
\rho=\rho_f,\quad u=u_f,\quad \theta=\theta_f,\quad j=\rho u=(j_1,j_2,j_3),
\]
and set
\[
e_1:=\int_{\R^3}v_1^2f\,\dv.
\]

We shall repeatedly use the following consequence of the boundedness of $\lambda$:
\bq\label{eq/ Q wt}
\int_0^T\iint_{\calO} \la v\ra^2 \bigl|\lambda(\rho)(M[f]-f)\bigr| \,\dx\dv\dt \le 2\|\lambda\|_{L^\infty} \int_0^T\iint_{\calO} \la v\ra^2f\,\dx\dv\dt <\infty.
\eq
Here we used the moment identities of the local Maxwellian.

%
%
%
%
%
%
%
%
%
%
%

\subsection{Boundary fluxes and macroscopic balance laws}

We first record the boundary estimates needed throughout this section.

\begin{lemma}\label{lem/ j bounds}
Let $f\ge0$ be a weak solution of \eqref{eq/ lambda} on $[0,T]$, satisfying either \textbf{(In)} or \textbf{(Max)}, and assume
\[
f\in C([0,T];L^1(\calO)) \cap L^\infty(0,T; L^1(\calO,\la v\ra^2\,\dx\dv) )
\]
In case \textbf{(Max)}, set
\[
K(t):=\iint_{\calO}xv_1f(t,x,v)\,\dx\dv, \quad c_{\frkM,1}:= \int_{v_1<0}v_1^2\frkM(v)\,\dv>0.
\]
Then, for every $t\in[0,T]$,
\bq\label{eq/ J est}
\frac{c_{\frkM,1}}2 \int_0^t(\alpha_lJ_l+\alpha_rJ_r )\,\ds \le K(0)-K(t) +\int_0^t\iint_{\calO}v_1^2f\,\dx\dv\ds.
\eq
For either boundary condition, the traces satisfy
\bq\label{eq/ tr wt}
\int_0^T\int_{\R^3} \la v\ra^2(f_l+f_r)|v_1|\,\dv\dt<\infty.
\eq
More precisely,
\bq\label{eq/ bdy E}
\alpha_l \int_0^T\int_{v_1<0} \la v\ra^2 f_l|v_1|\,\dv\dt +\alpha_r \int_0^T\int_{v_1>0} \la v\ra^2 f_rv_1\,\dv\dt \le C_T,
\eq
where $\alpha_l=\alpha_r=1$ in case \textbf{(In)}. The constant $C_T$ depends only on the bulk second-moment bound and the prescribed boundary data; in the Maxwell case it may also depend on $\alpha_l,\alpha_r$ and $\frkM$. It is independent of $\|\lambda\|_{L^\infty}$.
\end{lemma}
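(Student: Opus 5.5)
The plan is to test the weak formulation \eqref{eq/ green} (with $\beta(z)=z$) against velocity weights that grow polynomially, justified by truncation in $v$ combined with \eqref{eq/ Q wt}. The collision term vanishes in each identity used below because of the moment relations \eqref{eq/ M mom}. This is why no constant will depend on $\|\lambda\|_{L^\infty}$: boundedness of $\lambda$ only makes the truncation limits legitimate.

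\emph{Maxwell flux bound \eqref{eq/ J est}.} I take $\varphi=xv_1$, i.e.\ the test function $x$ times the collision invariant $v_1$. The transport term produces $\iint v_1^2 f$. The collision term drops out because $\int v_1(M[f]-f)\,\dv=0$. The boundary contribution is
\[
\tfrac12\int_{\R^3}v_1^2(f_r+f_l)\,\dv .
\]
This yields
\[
K(t)-K(0)+\tfrac12\int_0^t\!\int_{\R^3} v_1^2(f_r+f_l)\,\dv\ds=\int_0^t\!\iint_{\calO} v_1^2 f\,\dx\dv\ds .
\]
Next I insert the Maxwell law on the incoming half. For example, at $x=-\frac12$ with $v_1>0$, the term $\int_{v_1>0}v_1^2 f_l$ equals $(1-\alpha_l)\int_{v_1<0}v_1^2f_l+\alpha_l c_{\frkM,1}J_l$, using the symmetry of $\frkM$. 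All the resulting terms are nonnegative, so dropping everything except $\alpha_l c_{\frkM,1}J_l$ and the analogous right-boundary term gives \eqref{eq/ J est}. The quantity $\int_0^t\iint v_1^2 f$ is controlled by the bulk second-moment bound, and $|K|\le \frac12\|\la v\ra^2 f\|_{L^1}$. Hence $\int_0^T(\alpha_lJ_l+\alpha_rJ_r)\le C_T$.

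\emph{Energy traces \eqref{eq/ bdy E}.} I test with the energy invariant $\varphi=|v|^2$, or with $\la v\ra^2$, which is a combination of $1$ and $|v|^2$; again the collision term vanishes. This gives the energy balance: energy at time $t$, plus the outgoing energy flux, equals the initial energy plus the incoming energy flux.

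In case \textbf{(In)}, the incoming flux is prescribed and bounded by \eqref{eq/ in data}. This bounds the outgoing flux, and since the incoming traces are the data, \eqref{eq/ tr wt} follows.

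In case \textbf{(Max)}, write $\mathcal E_l^{\rm out}=\int_{v_1<0}\la v\ra^2f_l|v_1|$. The incoming energy at the left wall is $(1-\alpha_l)\mathcal E_l^{\rm out}+\alpha_l J_l\int_{v_1>0}\la v\ra^2\frkM v_1$. Substituting this into the energy balance, the net flux through the left wall becomes $\alpha_l\mathcal E_l^{\rm out}-\alpha_l c' J_l$, and similarly at the right wall. Therefore
\[
\alpha_l\!\int_0^t\!\mathcal E_l^{\rm out}+\alpha_r\!\int_0^t\!\mathcal E_r^{\rm out}\le E(0)+C\!\int_0^t(\alpha_lJ_l+\alpha_rJ_r),
\]
and the step-one bound closes the estimate. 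Incoming traces are then bounded using the reflection law. Since $\alpha_l,\alpha_r>0$, this gives \eqref{eq/ tr wt}.

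The main obstacle is justification rather than algebra. The test functions $xv_1$ and $|v|^2$ are unbounded, and a priori the traces need not be integrable against $\la v\ra^2|v_1|$. I would first use truncated weights $\chi_R(v)\la v\ra^2$ and $\chi_R(v)xv_1$. The collision term then no longer cancels exactly, but the error is bounded by the tail of \eqref{eq/ Q wt}, which vanishes as $R\to\infty$ for fixed $\lambda$; the uniform constants never see this error. The boundary terms are monotone in $R$ after the sign arrangement above, so monotone convergence gives the bounds. Continuity in $L^1$ allows evaluation at every $t$.
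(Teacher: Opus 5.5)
Your proposal is correct and follows the paper's proof essentially step by step. You test with $xv_1\chi_R$ and insert the Maxwell law (using $\int_{v_1>0}v_1^2\frkM\,\dv=c_{\frkM,1}$) to get \eqref{eq/ J est}, then test with $\la v\ra^2\chi_R$ to get the energy balance with diffuse source $c_{\frkM,2}\int_0^t(\alpha_lJ_l+\alpha_rJ_r)\,\ds$, and you use \eqref{eq/ Q wt} only to remove the truncation, so no constant depends on $\|\lambda\|_{L^\infty}$. The one point to make explicit is the extension to every $t$. Since $xv_1$ is unbounded, $L^1$ continuity alone does not give continuity of $K$; you must combine it with the uniform second-moment bound, as in the paper's interpolation \eqref{eq/ v1 cont}.
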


\begin{proof}
We use a standard multiplier argument; see \cite{AM94,Mis00,Mis00T,Mis10}. Let $0\le\chi_R\le1$ be a family of smooth radial velocity cutoffs such that $\chi_R\uparrow1$ as $R\to\infty$. The bound \eqref{eq/ Q wt} justifies the passage to the limit in all volume terms arising from the velocity truncation.

Consider first \textbf{(Max)}. Testing the Green formula with $xv_1\chi_R(v)$ and passing to the limit $R\to\infty$, using the momentum cancellation in \eqref{eq/ M mom}, gives, initially for almost every $t\in(0,T)$,
\bq\label{eq/ xv1}
K(t)-K(0) +\frac12\int_0^t\int_{\R^3} v_1^2(f_l+f_r)\,\dv\ds = \int_0^t\iint_{\calO}v_1^2f\,\dx\dv\ds.
\eq
We also note that $K\in C([0,T])$. Indeed, for every $s,t\in[0,T]$ and $R>0$,
\begin{equation} \label{eq/ v1 cont}
\begin{aligned}
|K(t)-K(s)| &\le \frac12\iint_{\calO}|v_1|\,|f(t)-f(s)|\,\dx\dv \\
&\le \frac{R}{2}\|f(t)-f(s)\|_{L^1(\calO)} +\frac1R\sup_{0\le\tau\le T} \iint_{\calO}|v|^2f(\tau)\,\dx\dv.
\end{aligned}
\end{equation}
Fixing $R\gg 1$ large and then using the continuity of $f$ in $L^1(\calO)$ yields the claimed continuity. Since the remaining terms in \eqref{eq/ xv1} are absolutely continuous in time, the identity extends to every $t\in[0,T]$.

The Maxwell boundary condition yields
\begin{align*}
\frac{c_{\frkM,1}}2 \int_0^t(\alpha_lJ_l+\alpha_rJ_r)\,\ds ={}& K(0)-K(t) +\int_0^t\iint_{\calO}v_1^2f\,\dx\dv\ds\\
&- \lt(1-\frac{\alpha_l}{2}\rt) \int_0^t\int_{v_1<0}v_1^2f_l\,\dv\ds - \lt(1-\frac{\alpha_r}{2}\rt) \int_0^t\int_{v_1>0}v_1^2f_r\,\dv\ds.
\end{align*}
Discarding the last two nonpositive terms proves \eqref{eq/ J est}.

We next test with $\la v\ra^2\chi_R(v)$. In case \textbf{(In)}, the collision contribution vanishes in the limit due to \eqref{eq/ M mom}, and the Green formula gives
\begin{align*}
&\iint_{\calO}\la v\ra^2f(t)\,\dx\dv +\int_0^t\int_{v_1<0} \la v\ra^2f_l|v_1|\,\dv\ds +\int_0^t\int_{v_1>0} \la v\ra^2f_rv_1\,\dv\ds\\
&\quad = \iint_{\calO}\la v\ra^2f_0\,\dx\dv +\int_0^t\int_{v_1>0} \la v\ra^2g_lv_1\,\dv\ds +\int_0^t\int_{v_1<0} \la v\ra^2g_r|v_1|\,\dv\ds.
\end{align*}
The right-hand side is finite by \eqref{eq/ in data}.

For \textbf{(Max)}, the specular part preserves the radial weight $\la v\ra^2$. Set
\[
c_{\frkM,2} := \int_{v_1>0}\la v\ra^2\frkM(v)v_1\,\dv.
\]
The same velocity-truncation argument gives
\begin{align*}
&\iint_{\calO}\la v\ra^2f(t)\,\dx\dv +\alpha_l\int_0^t\int_{v_1<0} \la v\ra^2f_l|v_1|\,\dv\ds +\alpha_r\int_0^t\int_{v_1>0} \la v\ra^2f_rv_1\,\dv\ds\\
&\quad = \iint_{\calO}\la v\ra^2f_0\,\dx\dv +c_{\frkM,2} \int_0^t(\alpha_lJ_l+\alpha_rJ_r)\,\ds.
\end{align*}
Together with \eqref{eq/ J est}, this proves \eqref{eq/ bdy E}. Since $\alpha_l,\alpha_r>0$, the Maxwell boundary condition also controls the corresponding incoming weighted fluxes through the reflected outgoing part and the Maxwellian contribution. This gives \eqref{eq/ tr wt}. The inflow case follows directly from the above energy balance and \eqref{eq/ in data}. This completes the proof.
\end{proof}

We next record the macroscopic balance laws. They follow from the collision invariants, while Lemma \ref{lem/ j bounds} supplies the trace integrability needed to justify the boundary terms.

\begin{lemma}
Under the assumptions of Lemma \ref{lem/ j bounds}, the macroscopic moments satisfy
\bq\label{eq/ mom}
\p_t\rho+\p_xj_1=0, \quad \p_tj_1+\p_xe_1=0 \quad\text{in }\calD'((0,T)\times I).
\eq
Moreover, define
\[
j_{f_r,1}(t):=\int_{\R^3}v_1f_r(t,v)\,\dv, \quad j_{f_l,1}(t):=\int_{\R^3}v_1f_l(t,v)\,\dv.
\]
Then, for every $\varphi\in\calD((0,T)\times\ov I)$,
\bq\label{eq/ mass bdy}
\begin{aligned}
&\int_0^T\int_I (\rho\p_t\varphi+j_1\p_x\varphi )\,\dx\dt =
\begin{cases}
\displaystyle \int_0^T \Big[ \varphi(t,\frac12)j_{f_r,1}(t) -\varphi(t,-\frac12)j_{f_l,1}(t) \Big]\,\dt, &\text{in case \textbf{(In)}},\\[8pt]
0, &\text{in case \textbf{(Max)}}.
\end{cases}
\end{aligned}
\eq
\end{lemma}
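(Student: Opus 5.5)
We test the Green formula \eqref{eq/ green} with $\beta(z)=z$ and test functions of the form $\varphi(t,x)\psi(v)\chi_R(v)$. Here $\varphi\in\calD((0,T)\times\ov I)$, $\psi\in\{1,v_1\}$, and $\chi_R$ is the radial velocity cutoff from the proof of Lemma \ref{lem/ j bounds}. We then let $R\to\infty$. Since $\varphi$ vanishes near $t=0$ and $t=T$, the terms at the initial and final times drop out.

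The volume terms are handled as follows. The transport term gives $\iint f\psi\chi_R(\p_t\varphi+v_1\p_x\varphi)$. Because $|\psi|(1+|v_1|)\le C\la v\ra^2$ and $f\in L^\infty(0,T;L^1(\la v\ra^2))$, dominated convergence sends it to $\int_0^T\int_I(\rho\p_t\varphi+j_1\p_x\varphi)$ when $\psi=1$, and to $\int_0^T\int_I(j_1\p_t\varphi+e_1\p_x\varphi)$ when $\psi=v_1$. The collision term is $\iint\lambda(\rho)(M[f]-f)\psi\chi_R\varphi$. By \eqref{eq/ Q wt}, its integrand is dominated by an $L^1$ function, so it converges to $\int\varphi\,\lambda(\rho)\int\psi(M[f]-f)\,\dv$, which vanishes by \eqref{eq/ M mom}.

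For the boundary terms we use \eqref{eq/ tr wt}, which gives $\la v\ra^2 f_{l,r}|v_1|\in L^1$. The boundary integrals $\int v_1\psi\chi_R f_{r}\varphi(t,\frac12)$ and $\int v_1\psi\chi_R f_{l}\varphi(t,-\frac12)$ therefore converge to their untruncated versions. When $\varphi\in\calD((0,T)\times I)$ has interior support, these terms vanish, and we obtain \eqref{eq/ mom} in $\calD'$.

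For $\psi=1$ and general $\varphi\in\calD((0,T)\times\ov I)$, the Green identity reads
\[
\int_0^T\int_I(\rho\p_t\varphi+j_1\p_x\varphi)\,\dx\dt=\int_0^T\Big[\varphi(t,\tfrac12)j_{f_r,1}-\varphi(t,-\tfrac12)j_{f_l,1}\Big]\dt,
\]
which is \eqref{eq/ mass bdy} in case \textbf{(In)}. In case \textbf{(Max)} we must show $j_{f_l,1}=j_{f_r,1}=0$ for a.e.\ $t$. At $x=-\frac12$ we split $j_{f_l,1}=\int_{v_1>0}v_1f_l\,\dv-J_l$. We substitute \eqref{eq/ maxwell} into the incoming part. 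The change of variables $v\mapsto\scrV v$ turns the specular contribution into $(1-\alpha_l)J_l$, and the normalization \eqref{eq/ M norm} turns the diffuse part into $\alpha_lJ_l$. The incoming flux therefore equals $J_l$, so the net flux vanishes. The same computation applies at $x=\frac12$. All the fluxes involved are finite by \eqref{eq/ tr wt}.

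The only delicate point is justifying the limit $R\to\infty$ in the collision and trace terms. This rests entirely on \eqref{eq/ Q wt} and \eqref{eq/ tr wt}, both of which require only first and second velocity weights. Since these are already available from Lemma \ref{lem/ j bounds}, I expect no real obstacle.
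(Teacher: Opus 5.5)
Your proposal is correct and follows the paper's proof essentially step for step. You test the Green formula with $\varphi\chi_R$ and $\varphi v_1\chi_R$, remove the cutoff by dominated convergence using \eqref{eq/ Q wt}, \eqref{eq/ tr wt}, and the bulk second-moment bound, eliminate the collision term via \eqref{eq/ M mom}, and show the Maxwell net flux vanishes through the reflection $v\mapsto\scrV v$ and the normalization \eqref{eq/ M norm}; the only cosmetic difference is that you verify the zero-flux identity at $x=-\frac12$, whereas the paper does it at $x=\frac12$.
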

\begin{proof}
We first derive the continuity equation together with its boundary contribution. Let $\chi_R\in C_c^\infty(\R^3)$ be a radial cutoff such that $\one_{\{|v|\le R\}}\le\chi_R\le\one_{\{|v|\le2R\}}$. For $\varphi\in\calD((0,T)\times\ov I)$, use $\varphi_R(t,x,v):=\varphi(t,x)\chi_R(v)$ as a test function in the Green formula for \eqref{eq/ lambda}. Since $\varphi$ is compactly supported in time, the initial and terminal terms vanish, and we obtain
\bq\label{eq/ chir}
\begin{aligned}
&\int_0^T \int_{\R^3} v_1f_r(t,v)\varphi(t,\frac12)\chi_R(v)\,\dv\dt - \int_0^T \int_{\R^3} v_1f_l(t,v)\varphi(t,-\frac12)\chi_R(v)\,\dv\dt\\
&= \int_0^T \iint_{\calO} f(\p_t\varphi+v_1\p_x\varphi)\chi_R \,\dx\dv\dt + \int_0^T \iint_{\calO} \lambda(\rho)(M[f]-f)\varphi\chi_R \,\dx\dv\dt.
\end{aligned}
\eq

We next pass to the limit $R\to\infty$. The boundary terms are well defined since \eqref{eq/ tr wt} implies
\[
\int_0^T|j_{f_r,1}(t)|\,\dt \le \int_0^T\int_{\R^3}|v_1|f_r\,\dv\dt<\infty \quad \text{and} \quad \int_0^T|j_{f_l,1}(t)|\,\dt \le \int_0^T\int_{\R^3}|v_1|f_l\,\dv\dt<\infty.
\]
Hence the boundary integrals in \eqref{eq/ chir} converge by the dominated convergence theorem.

For the bulk transport term, we use $|v_1|\le\la v\ra$ together with the finite second moment of $f$. Thus
\[
\int_0^T\iint_{\calO} f(\p_t\varphi+v_1\p_x\varphi)\chi_R \,\dx\dv\dt \longrightarrow  \int_0^T\int_I (\rho\p_t\varphi+j_1\p_x\varphi ) \,\dx\dt.
\]

It remains to consider the collision term. The bound \eqref{eq/ Q wt} provides an integrable majorant. Hence the dominated convergence theorem, together with the mass cancellation in \eqref{eq/ M mom}, gives
\[
\int_0^T\iint_{\calO} \lambda(\rho)(M[f]-f)\varphi\chi_R \,\dx\dv\dt \longrightarrow  \int_0^T\int_I \lambda(\rho)\varphi \lt( \int_{\R^3}(M[f]-f)\,\dv \rt)\dx\dt =0.
\]
Passing to the limit in \eqref{eq/ chir}, we obtain
\bq\label{eq/ mass green}
\int_0^T \varphi(t,\frac12)j_{f_r,1}(t)\,\dt - \int_0^T \varphi(t,-\frac12)j_{f_l,1}(t)\,\dt = \int_0^T\int_I (\rho\p_t\varphi+j_1\p_x\varphi ) \,\dx\dt.
\eq
In particular, taking $\varphi\in\calD((0,T)\times I)$ gives
\[
\p_t\rho+\p_xj_1=0 \quad\text{in }\calD'((0,T)\times I).
\]
Equation \eqref{eq/ mass green} also proves \eqref{eq/ mass bdy} in case \textbf{(In)}.

For the Maxwell boundary condition, the net mass flux vanishes at each endpoint. We verify this at $x=\frac12$. For almost every $t\in(0,T)$,
\[
\begin{aligned}
j_{f_r,1}(t) &= \int_{v_1>0}v_1f_r(t,v)\,\dv + \int_{v_1<0}v_1f_r(t,v)\,\dv\\
&= J_r(t) + \int_{v_1<0} v_1 \lt[ (1-\alpha_r)f_r(t,\scrV v) +\alpha_r\frkM(v)J_r(t) \rt] \dv.
\end{aligned}
\]
Under the change of variables $v\mapsto\scrV v$,
\[
\int_{v_1<0} v_1f_r(t,\scrV v)\,\dv = -\int_{v_1>0}v_1f_r(t,v)\,\dv = -J_r(t),
\]
while the normalization \eqref{eq/ M norm} gives
\[
\int_{v_1<0}v_1\frkM(v)\,\dv=-1.
\]
Consequently,
\[
j_{f_r,1}(t) = J_r(t) -(1-\alpha_r)J_r(t) -\alpha_rJ_r(t) =0.
\]
The same computation at $x=-\frac12$ gives $j_{f_l,1}(t)=0$. Substituting these identities into \eqref{eq/ mass green} proves \eqref{eq/ mass bdy} for \textbf{(Max)}.

We finally derive the normal momentum equation. Let now $\varphi\in\calD((0,T)\times I)$ and use $\psi_R(t,x,v) := \varphi(t,x)v_1\chi_R(v)$ as a test function. Since $\varphi$ is compactly supported in the interior of $I$, the boundary terms vanish. The Green formula yields
\[
0 = \int_0^T\iint_{\calO} f \Big( v_1\p_t\varphi +v_1^2\p_x\varphi \Big) \chi_R \,\dx\dv\dt + \int_0^T\iint_{\calO} \lambda(\rho)(M[f]-f) v_1\varphi\chi_R \,\dx\dv\dt.
\]
The first integral converges as $R\to\infty$ to
\[
\int_0^T\int_I ( j_1\p_t\varphi +e_1\p_x\varphi  )\,\dx\dt,
\]
since the second velocity moment is integrable.

For the collision term, \eqref{eq/ Q wt} provides an integrable majorant, while the momentum cancellation in \eqref{eq/ M mom} gives
\[
\int_{\R^3}v_1(M[f]-f)\,\dv=0.
\]
Hence
\[
\int_0^T \iint_{\calO} \lambda(\rho)(M[f]-f) v_1\varphi\chi_R \,\dx\dv\dt \longrightarrow  0,
\]
and this yields
\[
\int_0^T\int_I ( j_1\p_t\varphi +e_1\p_x\varphi  )\,\dx\dt=0
\]
for every $\varphi\in\calD((0,T)\times I)$. This is equivalent to
\[
\p_tj_1+\p_xe_1=0 \quad\text{in }\calD'((0,T)\times I).
\]
This completes the proof.
\end{proof}

%
%
%
%
%
%
%
%
%
%
%

\subsection{A Bony estimate in the slab}

The one-dimensional balance laws in \eqref{eq/ mom} allow us to recover a compensated estimate of Bony type \cite{Bon87,Cer92,BCP06,Wyn23}; see also the compensated-integrability viewpoint in \cite{Ser18}. In the presence of boundaries, it is useful to choose a primitive of the density which vanishes at both endpoints. We set
\bq\label{eq/ scra}
\scrA(t,x) := \int_{-\frac12}^x\rho(t,y)\,\dy - \lt(x+\frac12\rt) \int_I\rho(t,y)\,\dy.
\eq
In particular,
\[
\scrA(t,-\frac12)=\scrA(t,\frac12)=0.
\]
The associated Bony functional is defined by
\[
\scrB(t) := \int_I\scrA(t,x)j_1(t,x)\,\dx.
\]
The vanishing of $\scrA$ at the endpoints eliminates the momentum boundary terms arising from the integration by parts.

\begin{proposition}\label{prop/ bony}
Under the assumptions of Lemma \ref{lem/ j bounds}, there exists $C_T>0$, independent of $\|\lambda\|_{L^\infty}$, such that
\bq\label{eq/ useful}
\int_0^T\int_I (\rho e_1-j_1^2 )\,\dx\dt \le C_T.
\eq
More precisely, for every $t\in[0,T]$, we have
\bq\label{eq/ B prelim}
\int_0^t\int_I (\rho e_1-j_1^2 )\,\dx\ds \le \scrB(t)-\scrB(0)+C_T.
\eq
\end{proposition}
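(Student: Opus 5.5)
We compute $\frac{\dd}{\dt}\scrB$ using the balance laws \eqref{eq/ mom} and the boundary identity \eqref{eq/ mass bdy}, then integrate in time. Formally,
\[
\p_t\scrA(t,x) = -j_1(t,x) + j_1(t,-\tfrac12) - \lt(x+\tfrac12\rt)\frac{\dd}{\dt}\int_I\rho\,\dy .
\]
In case \textbf{(Max)}, the boundary mass fluxes vanish and total mass is conserved. Hence $\p_t\scrA=-j_1$.

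In case \textbf{(In)}, write $m(t)=\int_I\rho\,\dy$. Then \eqref{eq/ mass bdy} gives
\[
\p_t\scrA=-j_1+j_{f_l,1}(t)-\lt(x+\tfrac12\rt)\bigl(j_{f_l,1}-j_{f_r,1}\bigr).
\]
The error term is bounded by $|j_{f_l,1}|+|j_{f_r,1}|$, which is integrable in time by \eqref{eq/ tr wt}.

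For the momentum part, we use $\p_tj_1=-\p_xe_1$. Since $\scrA$ vanishes at both endpoints, integrating by parts gives
\[
\int_I\scrA\,\p_tj_1\,\dx=\int_I\p_x\scrA\, e_1\,\dx=\int_I(\rho-m)e_1\,\dx .
\]
No boundary term appears; this is exactly why $\scrA$ was chosen with zero endpoint values. Collecting terms,
\[
\frac{\dd}{\dt}\scrB=\int_I(\rho e_1-j_1^2)\,\dx - m(t)\int_I e_1\,\dx + \int_I j_1\,(\text{boundary-flux error})\,\dx .
\]
Integrating over $(0,t)$ and rearranging gives \eqref{eq/ B prelim} with
\[
C_T = \sup_t m(t)\int_0^T\!\!\int_I e_1\,\dx\ds + \sup_t\|j_1(t)\|_{L^1}\int_0^T\bigl(|j_{f_l,1}|+|j_{f_r,1}|\bigr)\,\ds .
\]
The first term is controlled by the mass bound and the $L^\infty_tL^1(\la v\ra^2)$ bound. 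The second is controlled by $|j_1|\le\int\la v\ra^2 f\,\dv$ together with \eqref{eq/ tr wt}, whose bound, by Lemma \ref{lem/ j bounds}, is independent of $\|\lambda\|_{L^\infty}$.

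Next we bound $\scrB$ itself. We have $|\scrA|\le 2m(t)$ and $|j_1|\le \int(1+|v|^2)f\,\dv$, so $|\scrB(t)|\le 2m(t)\iint_{\calO}\la v\ra^2 f(t)\,\dx\dv$ uniformly on $[0,T]$. Thus \eqref{eq/ B prelim} implies \eqref{eq/ useful}. The integrand $\rho e_1-j_1^2$ is pointwise nonnegative by Cauchy--Schwarz, so the estimate is a genuine $L^1$ bound.

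The main obstacle is justifying the formal computation at the regularity available. Here $\scrA$ is only $L^\infty_tW^{1,1}_x$, and $\rho e_1$ is merely a product of $L^1$ functions, not known a priori to be integrable; indeed, its integrability is the conclusion. To handle this, I would mollify in time and regularize in space. More efficiently, I would truncate velocities: test the Green formula for \eqref{eq/ lambda} with $\scrA_\eta(t,x)\,v_1\chi_R(v)$, where $\scrA_\eta$ is built from a time-mollified density and still vanishes at $x=\pm\frac12$. The collision contribution is handled via \eqref{eq/ Q wt} and the momentum cancellation in \eqref{eq/ M mom}. The term $\int\p_x\scrA\, e_1$, that is $\int\rho e_1$, is nonnegative, so Fatou's lemma applies when passing $R\to\infty$ and $\eta\to0$; this direction is exactly the one needed to get the inequality \eqref{eq/ B prelim}. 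The $j_1^2$ term is handled by a limiting argument using the $L^1$ bound on $j_1$ and the strong time continuity of $\scrA$, which follows from $f\in C([0,T];L^1)$. Endpoint values of $\scrB$ are continuous in time by the same argument used for $K$ in \eqref{eq/ v1 cont}.
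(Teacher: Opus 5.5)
Your formal computation of $\frac{\dd}{\dt}\scrB$, the treatment of the inflow boundary fluxes, and the bound on $|\scrB(t)|$ all agree with the paper. The gap is in the justification, and that is where the content of the proposition lies.

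In your scheme only $\scrA$ is regularized, through a time-mollified density $\rho_\eta$, while $j_1$ and $e_1$ are left as they are. Testing with $\scrA_\eta v_1\chi_R$ and letting $R\to\infty$ then produces the mixed integrand $\rho_\eta e_1-j_1j_{\eta,1}$ instead of $\rho e_1-j_1^2$. Here $j_{\eta,1}$ is the mollified flux appearing in $\p_t\scrA_\eta$.

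This mixed quantity has no sign, so Fatou cannot be applied to it as a whole. If you split it as you propose, Fatou does handle $\rho_\eta e_1$. But you then need the cross term $\int_0^t\!\int_I j_1j_{\eta,1}\,\dx\ds$ to converge to $\int_0^t\!\int_I j_1^2\,\dx\ds$, and that requires $j_1\in L^2((0,t)\times I)$. This is not available a priori, because $j_1^2\le\rho e_1$ is exactly the quantity being estimated. An $L^1$ bound on $j_1$ together with time continuity of $\scrA$ gives no control of this cross term.

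There is also an admissibility problem. Since $\p_x\scrA_\eta=\rho_\eta-m_\eta$ is only $L^1$ in $x$, the term $\int f v_1^2\chi_R\,\p_x\scrA_\eta$ is a product of two $L^1_x$ functions. So $\scrA_\eta v_1\chi_R$ is not a legitimate test function for an $L^1$ weak solution.

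The missing idea is to regularize $f$ itself, so that all three moments are regularized by the same kernel. The paper sets $f_\e=f*_t\zeta_\e$ with a one-sided kernel supported in the future. This makes the balance laws \eqref{eq/ mom} hold in $\calD'(I)$ for every $t\in J_\e$. Consequently:
\begin{itemize}
\item $\p_xj_{\e1}=-\p_t\rho_\e$ and $\p_xe_{1\e}=-\p_tj_{\e1}$ belong to $L^1(I)$, so $j_{\e1}(t),e_{1\e}(t)\in W^{1,1}(I)\subset L^\infty(I)$.
\item Every term in $\frac{\dd}{\dt}\scrB_\e$ is integrable, and the formal computation becomes rigorous.
\item The endpoint values of $j_{\e1}$ make sense, and in case \textbf{(Max)} they vanish by \eqref{eq/ mass bdy}.
\item Most importantly, $\rho_\e e_{1\e}-j_{\e1}^2=\rho_\e\int_{\R^3}(v_1-u_{\e1})^2f_\e\,\dv\ge0$ by Cauchy--Schwarz for the single nonnegative density $f_\e$.
\end{itemize}
The last point lets Fatou apply to the full integrand along an a.e.\ convergent subsequence. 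This yields \eqref{eq/ B prelim} without ever needing $j_1\in L^2$.
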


\begin{proof}
We regularize only in time. Choose $0\le\zeta\in C_c^\infty((-2,-1))$, $\int_{\R}\zeta(s)\,\ds=1$, and, for $0<\e<\min\{1,\frac T2\}$, set $\zeta_\e(t):= \frac1\e\zeta\lt(\frac{t}{\e}\rt)$. For $t \in J_\e:=(-\e,T-2\e)$, define the one-sided temporal convolutions
\[
\rho_\e(t) :=\int_0^T\rho(s)\zeta_\e(t-s)\,\ds,
\]
and analogously $j_\e$ and $e_{1\e}$. Since $\operatorname{supp}\zeta_\e(t-\cdot) \subset(t+\e,t+2\e) \subset(0,T)$ for $t\in J_\e$, the balance laws \eqref{eq/ mom} give
\bq\label{eq/ mol cont}
\forall t\in J_\e, \quad \p_t\rho_\e(t) +\p_xj_{\e1}(t)=0, \quad \p_tj_{\e1}(t)+\p_xe_{1\e}(t)=0 \quad\text{in }\calD'(I).
\eq
The temporal smoothing and \eqref{eq/ mol cont} then imply $j_{\e1},e_{1\e} \in C^\infty(J_\e;W^{1,1}(I))$. 

We then define
\[
\scrA_\e(t,x) := \int_{-\frac12}^x\rho_\e(t,y)\,\dy - \lt(x+\frac12\rt) \int_I\rho_\e(t,y)\,\dy \quad \text{and} \quad \scrB_\e(t) := \int_I\scrA_\e(t,x)j_{\e1}(t,x)\,\dx,
\]
and note
\bq\label{eq/ A bdy}
\scrA_\e(t,-\frac12) = \scrA_\e(t,\frac12) =0.
\eq

\medskip \noindent $\bullet$ (The Maxwell boundary condition) We first consider \textbf{(Max)}. For fixed $t\in J_\e$, use $\varphi(s,x)=\zeta_\e(t-s)\psi(x)$ in \eqref{eq/ mass bdy}, where $\psi\in C^\infty(\ov I)$ may be nonzero at either endpoint. Differentiating the temporal convolution gives
\[
\int_I j_{\e1}(t,x)\p_x\psi(x)\,\dx=\int_I\p_t\rho_\e(t,x)\psi(x)\,\dx.
\]
On the other hand, because $j_{\e1}(t)\in W^{1,1}(I)$, the following computations are justified:
\[
\int_I j_{\e1}(t,x)\p_x\psi(x)\,\dx =\int_I\p_t\rho_\e(t,x)\psi(x)\,\dx +j_{\e1}(t,\frac12)\psi(\frac12)-j_{\e1}(t,-\frac12)\psi(-\frac12),
\]
where we used also \eqref{eq/ mol cont}. Comparing the two identities and choosing the endpoint values of $\psi$ independently, we obtain
\[
j_{\e1}(t,-\frac12)=j_{\e1}(t,\frac12)=0.
\]
Consequently,
\[
\p_t\scrA_\e(t,x) = -j_{\e1}(t,x).
\]
Moreover,
\[
\p_x\scrA_\e(t,x) = \rho_\e(t,x)-m_\e(t), \quad m_\e(t):=\int_I\rho_\e(t,y)\,\dy.
\]
Since the Maxwell boundary condition preserves mass, $m_\e(t)=m(0)$ on $J_\e$. Using \eqref{eq/ mol cont} and \eqref{eq/ A bdy}, we obtain
\[
\ddt\scrB_\e(t) = -\int_Ij_{\e1}^2\,\dx -\int_I\scrA_\e\,\p_xe_{1\e}\,\dx = -\int_Ij_{\e1}^2\,\dx +\int_I (\rho_\e-m(0) )e_{1\e}\,\dx.
\]
Hence, for $0\le t<T-2\e$, we have
\bq\label{eq/ B reg}
\int_0^t\int_I (\rho_\e e_{1\e}-j_{\e1}^2 ) \,\dx\ds \le m(0) \int_0^t\int_Ie_{1\e}\,\dx\ds +\scrB_\e(t)-\scrB_\e(0).
\eq

We next pass to the limit $\e\to0$. By the same argument in \eqref{eq/ v1 cont}, we first observe that the assumptions of Lemma \ref{lem/ j bounds} on $f$ imply
\[
\rho,j\in C([0,T];L^1(I)).
\]
Standard properties of one-sided mollification then yield, for every $T'<T$,
\[
\rho_\e\to\rho,\quad j_\e\to j \quad\text{in }C([0,T'];L^1(I)), \quad \text{and} \quad e_{1\e}\to e_1 \quad\text{in }L^1((0,T')\times I).
\]
Consequently,
\[
\scrA_\e\to\scrA \quad\text{uniformly on }[0,T']\times\ov I, \quad \text{and} \quad \scrB_\e\to\scrB \quad\text{in }C([0,T']).
\]

For a fixed $t<T$, we may extract a subsequence such that $\rho_\e$, $j_{\e1}$, and $e_{1\e}$ converge almost everywhere on $(0,t)\times I$. Since
\[
\rho_\e e_{1\e}-j_{\e1}^2 = \rho_\e \int_{\R^3} (v_1-u_{\e1})^2f_\e\,\dv \ge0,
\]
where
\[
f_\e:=f*_t\zeta_\e, \quad u_{\e1}:=\frac{j_{\e1}}{\rho_\e}
\]
with the usual convention on $\{\rho_\e=0\}$, Fatou's lemma and \eqref{eq/ B reg} give
\[
\int_0^t\int_I (\rho e_1-j_1^2)\,\dx\ds \le m(0)\int_0^t\int_Ie_1\,\dx\ds +\scrB(t)-\scrB(0).
\]
This proves \eqref{eq/ B prelim} for \textbf{(Max)}.

\medskip 
\noindent $\bullet$ (The inflow boundary condition) For \textbf{(In)}, the mass in the slab is not constant. We still have $\p_x\scrA_\e = \rho_\e-m_\e(t)$, while
\[
\p_t\scrA_\e(t,x) = -j_{\e1}(t,x) +j_{\e1}(t,-\frac12) - \lt(x+\frac12\rt) \lt( -j_{\e1}(t,\frac12) +j_{\e1}(t,-\frac12)\rt).
\]
Using these identities in the derivative of $\scrB_\e$ gives
\bq\label{eq/ B in}
\begin{aligned}
\int_0^t\int_I (\rho_\e e_{1\e}-j_{\e1}^2 )\,\dx\ds &= \scrB_\e(t)-\scrB_\e(0) - \int_0^t j_{\e1}(s,-\frac12) \lt(\int_Ij_{\e1}(s,x)\,\dx\rt) \ds\\
&\quad- \int_0^t \lt( j_{\e1}(s,\frac12)-j_{\e1}(s,-\frac12)  \rt) \lt( \int_I \lt(x+\frac12\rt)j_{\e1}(s,x)\,\dx \rt)\ds\\
&\quad+ \int_0^t m_\e(s) \lt(\int_Ie_{1\e}(s,x)\,\dx\rt) \ds.
\end{aligned}
\eq

It remains to control the boundary values of $j_{\e1}$. Convolving the Green formula in time and using the absolutely continuous representative of $j_{\e1}(t)$, we have
\[
j_{\e1}(t,-\frac12) = \int_{\R^3}v_1f_{l\e}(t,v)\,\dv, \quad j_{\e1}(t,\frac12) = \int_{\R^3}v_1f_{r\e}(t,v)\,\dv,
\]
where $f_{l\e}:=f_l*_t\zeta_\e$, $f_{r\e}:=f_r*_t\zeta_\e$. Since convolution with $\zeta_\e$ is $L^1$-contractive in time, \eqref{eq/ tr wt} yields
\[
\int_0^{T-2\e} \lt( |j_{\e1}(s,-\frac12)| +|j_{\e1}(s,\frac12)|  \rt) \ds \le C_T.
\]
The bulk moment bounds also give
\[
\sup_{0<\e<\min\{1,\frac T2\}} \lt( \|m_\e\|_{L^\infty(J_\e)} + \|j_{\e1}\|_{L^\infty(J_\e;L^1(I))} + \|e_{1\e}\|_{L^\infty(J_\e;L^1(I))} \rt) \le C_T.
\]
Applying these estimates to \eqref{eq/ B in} yields
\[
\int_0^t\int_I (\rho_\e e_{1\e}-j_{\e1}^2 )\,\dx\ds \le \scrB_\e(t)-\scrB_\e(0)+C_T.
\]
The convergence argument used in the Maxwell case now gives \eqref{eq/ B prelim} for \textbf{(In)}.

Finally, from \eqref{eq/ scra}, $|\scrA(t,x)| \le \|\rho(t)\|_{L^1(I)}$, and thus
\[
|\scrB(t)| \le \|\rho(t)\|_{L^1(I)} \|j_1(t)\|_{L^1(I)} \le C_T.
\]
Letting $t\uparrow T$ in \eqref{eq/ B prelim}, using the continuity of $\scrB$ and the monotone convergence of the nonnegative integral on the left-hand side, proves \eqref{eq/ useful}. This completes the proof.
\end{proof}

%
%
%
%
%
%
%
%
%
%
%

\subsection{Control of the density weighted thermal energy}\label{subsec/ therm}

The Bony estimate obtained above can be converted into control of the density weighted temperature. The independence of the estimate from $\|\lambda\|_{L^\infty}$ will allow us to recover a space-time bound on $\rho_f^2\theta_f$ in the limit of the truncated collision frequencies.

\begin{proposition}\label{prop/ rho2}
Let $f$ satisfy the assumptions of Lemma \ref{lem/ j bounds}, and suppose in addition that
\[
0\le\lambda(z)\le z \quad\text{for all }z\ge0.
\]
Then, for every $T>0$, we have
\bq\label{eq/ therm}
\int_0^T\int_I \lambda(\rho)\rho\theta\,\dx\dt \le C_T,
\eq
where $C_T$ is independent of $\|\lambda\|_{L^\infty}$.
\end{proposition}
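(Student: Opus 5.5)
The plan is to test the equation \eqref{eq/ lambda} against the transverse energy weight $|\ov v|^2=v_2^2+v_3^2$, with no spatial weight. By the isotropy identity from the introduction,
\[
\int_{\R^3}|\ov v|^2(M[f]-f)\,\dv=\frac{\rho e_1-j_1^2}{\rho}-\rho\theta .
\]
To verify this, write $\int|\ov v|^2M[f]\,\dv=\rho|\ov u|^2+2\rho\theta$. Then
\[
\int|\ov v|^2f\,\dv=3\rho\theta+\rho|u|^2-\int v_1^2f\,\dv,
\]
and $e_1-\rho u_1^2=(\rho e_1-j_1^2)/\rho$. Multiplying by $\lambda(\rho)$, the collision contribution becomes
\[
-\lambda(\rho)\rho\theta+\frac{\lambda(\rho)}{\rho}(\rho e_1-j_1^2).
\]
The second term is nonnegative. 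Since $0\le\lambda(\rho)/\rho\le1$, it is bounded by $\rho e_1-j_1^2$, whose space-time integral is controlled by Proposition \ref{prop/ bony}. This is exactly where the structural condition $\lambda(z)\le z$ enters, and it keeps all constants independent of $\|\lambda\|_{L^\infty}$.

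Concretely, I would use $|\ov v|^2\chi_R(v)$ as a test function in the Green formula on $[0,t]$, exactly as in Lemma \ref{lem/ j bounds}. I would then let $R\to\infty$ using the majorant \eqref{eq/ Q wt}, the bulk second-moment bound, and the trace bound \eqref{eq/ tr wt}. This is legitimate because $\lambda$ is bounded at this stage; the resulting identity itself no longer involves $\|\lambda\|_{L^\infty}$. It reads
\[
\iint_{\calO}|\ov v|^2f(t)\,\dx\dv-\iint_{\calO}|\ov v|^2f_0\,\dx\dv+\int_0^t\!\!\int_{\R^3}v_1|\ov v|^2(f_r-f_l)\,\dv\ds
=\int_0^t\!\!\int_I\Big[\frac{\lambda(\rho)}{\rho}(\rho e_1-j_1^2)-\lambda(\rho)\rho\theta\Big]\dx\ds ,
\]
with the integrand interpreted as $0$ on $\{\rho=0\}$. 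Rearranging gives
\[
\int_0^t\!\!\int_I\lambda(\rho)\rho\theta
\le \iint|\ov v|^2f_0+\int_0^t\!\!\int_I(\rho e_1-j_1^2)+\int_0^t\!\!\int_{\R^3}|v_1||\ov v|^2(f_l+f_r)\,\dv\ds .
\]
Here the term $\iint|\ov v|^2f(t)\ge0$ has been dropped.

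It remains to bound the boundary flux. In case \textbf{(In)}, \eqref{eq/ tr wt} already gives the bound in terms of the data, since $|\ov v|^2\le\la v\ra^2$. In case \textbf{(Max)}, the specular part preserves $|\ov v|^2$, so the net flux at each wall reduces to $\alpha$ times the difference between the diffuse Maxwellian input and the outgoing weighted flux. This is bounded by $c_{\frkM,2}\alpha(J_l+J_r)$ plus the outgoing term, both controlled by \eqref{eq/ J est} and \eqref{eq/ bdy E}. Both bounds depend only on the second-moment bound and the data, and not on $\|\lambda\|_{L^\infty}$.

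The main obstacle I anticipate is justifying the pointwise identity and the sign argument near vacuum, and making sure $\rho e_1-j_1^2\ge0$ integrates consistently. On $\{\rho=0\}$ one has $f=0$ a.e. in $v$, so every term vanishes. Elsewhere the Cauchy–Schwarz inequality gives $j_1^2\le\rho e_1$. Since every term in the collision integrand is integrable for bounded $\lambda$, splitting it is legitimate, and the estimate then follows from Proposition \ref{prop/ bony} with $t=T$.
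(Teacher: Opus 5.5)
Your proposal is correct and follows essentially the same route as the paper. Both test the equation with $|\ov v|^2$, use the isotropy identity $\int|\ov v|^2(M[f]-f)\,\dv=\frac{\rho e_1-j_1^2}{\rho}-\rho\theta$, drop $\iint|\ov v|^2f(t)\ge0$, control the boundary flux via \eqref{eq/ tr wt}, and invoke $\lambda(\rho)\le\rho$ together with Proposition \ref{prop/ bony}. Your finer treatment of the Maxwell flux, which exploits specular invariance of $|\ov v|^2$ and \eqref{eq/ J est}, is a harmless refinement of the paper's direct appeal to \eqref{eq/ tr wt}.
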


\begin{proof}
Write $\ov v:=(v_2,v_3)$, $\ov u:=(u_2,u_3)$, and introduce the transverse pressure
\[
\ov P := \int_{\R^3} |\ov v-\ov u|^2f\,\dv.
\]
The definition of the temperature gives
\bq\label{eq/ tr press}
3\rho\theta = \int_{\R^3}|v-u|^2f\,\dv = \ov P + \int_{\R^3}(v_1-u_1)^2f\,\dv = \ov P + \frac{\rho e_1-j_1^2}{\rho}.
\eq
All quotients are interpreted as zero on the vacuum set.

Since the local Maxwellian is isotropic around $u$, we also have
\[
\int_{\R^3} |\ov v|^2(M[f]-f )\,\dv = 2\rho\theta-\ov P.
\]
Combining this identity with \eqref{eq/ tr press} yields
\bq\label{eq/ tr Q}
\int_{\R^3} |\ov v|^2(M[f]-f )\,\dv = \frac{\rho e_1-j_1^2}{\rho} -\rho\theta.
\eq

Set
\[
\ov E(t) := \iint_{\calO}|\ov v|^2f(t,x,v)\,\dx\dv \quad \text{and} \quad \ov F(t) := \int_{\R^3} |\ov v|^2v_1 (f_r(t,v)-f_l(t,v) )\,\dv.
\]
We remark that \eqref{eq/ tr wt} gives
\bq\label{eq/ tr flux}
\int_0^T|\ov F(t)|\,\dt \le C_T,
\eq
with a constant independent of $\|\lambda\|_{L^\infty}$; moreover, the bound \eqref{eq/ Q wt} shows that
\begin{equation*}
    \int_0^T \iint_{\calO} \lambda(\rho) |\ov v|^2 |M[f] - f| \,\dx\dv\dt \le \|\lambda\|_{L^\infty} C_T.
\end{equation*}
These show that, after an approximation procedure, $|\ov v|^2$ is admissible as a test function in the Green formula of \eqref{eq/ lambda}. In particular, it is deduced that $\ov E(t)$ is absolutely continuous on $[0,T]$ with the identity

\bq\label{eq/ tr E}
\begin{aligned}
\ddt\ov E(t) &= - \ov F(t) + \iint_{\calO} \lambda(\rho) |\ov v|^2 (M[f] - f) \,\dx\dv  \\
&= - \ov F(t) + \int_I \lambda(\rho) \lt( \frac{\rho e_1-j_1^2}{\rho} -\rho\theta \rt) \dx,
\end{aligned}
\eq 
where the last equality is due to \eqref{eq/ tr Q}. Consequently, we find
\bq\label{eq/ lambda rho}
\int_0^T\int_I \lambda(\rho)\rho\theta\,\dx\dt = \int_0^T\int_I \frac{\lambda(\rho)}{\rho} (\rho e_1-j_1^2 )\,\dx\dt + \ov E(0)-\ov E(T) -\int_0^T\ov F(t)\,\dt.
\eq
Since $\lambda(\rho)\le\rho$ and $\rho e_1-j_1^2\ge0$,
\[
\frac{\lambda(\rho)}{\rho} (\rho e_1-j_1^2 ) \le \rho e_1-j_1^2.
\]
Using also $\ov E(T)\ge0$, \eqref{eq/ tr flux}, and Proposition \ref{prop/ bony}, we conclude from \eqref{eq/ lambda rho} that
\[
\int_0^T\int_I \lambda(\rho)\rho\theta\,\dx\dt \le \int_0^T\int_I (\rho e_1-j_1^2 )\,\dx\dt +C_T \le C_T.
\]
This proves \eqref{eq/ therm} and completes the proof.
\end{proof}

%
%
%
%
%
%
%
%
%
%
%

\section{Uniform integrability of the kinetic energy}\label{sec/ tails}

This section establishes uniform integrability in velocity of the bulk kinetic energy and the outgoing energy flux. We first collect the basic estimates for the approximate solutions and then combine the thermal bound of Section \ref{sec/ comp} with a convex weight chosen from the data.

\subsection{Approximate solutions and uniform estimates}\label{sec/ approximate}

For $\e\in(0,1)$ we set
\[
\lambda_\e(z):=\frac{z}{1+\e z}
\]
and consider nonnegative weak solutions $f_\e$ of
\bq\label{eq/ regbgk}
\begin{cases}
\p_t f_\e+v_1\p_x f_\e = \lambda_\e(\rho_{f_\e}) (M[f_\e]-f_\e ) &\text{in }(0,\infty)\times\calO,\\
f_\e|_{t=0}=f_{\e0},
\end{cases}
\eq
with either \textbf{(In)} or \textbf{(Max)} on the boundary.

Throughout this section we work with entropy-admissible weak solutions $f_\e$ satisfying, for every $T>0$,
\[
f_\e\in C([0,T];L^1(\calO)), \quad \la v\ra^2f_\e\in L^\infty(0,T;L^1(\calO)).
\]
By entropy-admissible we mean that these solutions are assumed to satisfy the entropy inequalities \eqref{eq/ H in} and \eqref{eq/ H max} used in Lemma \ref{lem/ every} below. We refer to Proposition \ref{prop/ approx} for the construction of such entropy-admissible solutions. We point out that the existence argument there uses constants depending on $\e$, but the estimates below are uniform as $\e\to0$.

We fix a sequence $\e_n\downarrow0$ and, to simplify the notation, continue to denote its elements by $\e$. All suprema indexed by $\e>0$ below are taken over this fixed sequence in $(0,1)$. The approximate initial data are chosen such that
\bq\label{eq/ init E}
f_{\e0}\longrightarrow  f_0 \quad\text{strongly in } L^1(\calO,\la v\ra^2\,\dx\dv),
\eq
and
\[
\sup_{\e>0} \iint_{\calO}f_{\e0}|\log f_{\e0}|\,\dx\dv<\infty, \quad \iint_{\calO}f_{\e0}\log f_{\e0}\,\dx\dv \longrightarrow  \iint_{\calO}f_0\log f_0\,\dx\dv.
\]
In particular,
\bq\label{eq/ init bdd}
\sup_{\e>0} \iint_{\calO}\la v\ra^2f_{\e0}\,\dx\dv<\infty.
\eq
After passing to a subsequence, we also assume
\[
f_{\e0}\longrightarrow  f_0 \quad\text{a.e. in }\calO.
\]

We first derive the basic energy and entropy estimates for \eqref{eq/ regbgk}.

\begin{lemma}\label{lem/ every}
Let $f_\e$ be the entropy-admissible solution of \eqref{eq/ regbgk} corresponding to $f_{\e0}$. Then, for every $T>0$, there exists $C_T>0$, independent of $\e$, such that the following estimates hold.
\begin{enumerate}

\item[(i)] The bulk energy and entropy satisfy
\bq\label{eq/ bdd}
\sup_{\e>0}\, \operatorname*{ess\,sup}_{0<t<T} \iint_{\calO} f_\e(\la v\ra^2+|\log f_\e| ) \,\dx\dv \le C_T.
\eq

\item[(ii)] The entropy production satisfies
\bq\label{eq/ reg diss}
\begin{aligned}
\sup_{\e>0}\int_0^T\iint_{\calO} &\lambda_\e(\rho_{f_\e}) (f_\e-M[f_\e] ) (\log f_\e-\log M[f_\e] ) \,\dx\dv\dt \le C_T.
\end{aligned}
\eq

\item[(iii)] In case \textbf{(In)}, the outgoing traces satisfy
\bq\label{eq/ out bdd}
\begin{aligned}
&\sup_{\e>0} \int_0^T\int_{v_1<0} f_{\e l} (\la v\ra^2+|\log f_{\e l}| ) |v_1|\,\dv\dt \le C_T,\\
&\sup_{\e>0} \int_0^T\int_{v_1>0} f_{\e r} (\la v\ra^2+|\log f_{\e r}| ) v_1\,\dv\dt \le C_T.
\end{aligned}
\eq

\item[(iv)] In case \textbf{(Max)}, define
\[
J_{\e l}(t) := \int_{v_1<0}f_{\e l}(t,v)|v_1|\,\dv, \quad J_{\e r}(t) := \int_{v_1>0}f_{\e r}(t,v)v_1\,\dv,
\]
and let $h(z)=z\log z$. Define the Darroz\`es--Guiraud information terms by
\[
\scrE_{\e l}(t) := \int_{v_1<0} h(\frac{f_{\e l}}{\frkM})\frkM|v_1|\,\dv -h(J_{\e l}(t)),\quad \scrE_{\e r}(t) := \int_{v_1>0} h(\frac{f_{\e r}}{\frkM})\frkM v_1\,\dv -h(J_{\e r}(t)).
\] 
When the corresponding outgoing flux vanishes, the information is defined to be zero. Then
\bq\label{eq/ darr bdd}
\int_0^T ( \alpha_l\scrE_{\e l}(t) +\alpha_r\scrE_{\e r}(t)  )\,\dt \le C_T.
\eq

\end{enumerate}

\end{lemma}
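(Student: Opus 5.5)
The plan is to combine the energy balance of Lemma \ref{lem/ j bounds} with the entropy inequalities \eqref{eq/ H in}, \eqref{eq/ H max} that are assumed for entropy-admissible solutions, and then to bound the negative part of $f_\e\log f_\e$ in the classical way.

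\emph{Energy.} Lemma \ref{lem/ j bounds} applies to $f_\e$ with $\lambda=\lambda_\e$. In case \textbf{(In)}, its energy identity bounds $\iint\la v\ra^2 f_\e(t)$ and the outgoing weighted fluxes by \eqref{eq/ init bdd} and \eqref{eq/ in data}. These constants do not depend on $\|\lambda_\e\|_{L^\infty}$.

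Case \textbf{(Max)} needs a Gronwall step. By \eqref{eq/ J est}, $|K(t)|\le \frac12\iint|v_1|f_\e\le \frac12\iint\la v\ra^2 f_\e$. Insert this into the identity containing $c_{\frkM,2}\int_0^t(\alpha_lJ_{\e l}+\alpha_rJ_{\e r})$. Writing $E(t)=\iint\la v\ra^2 f_\e(t)$, this gives $E(t)\le C(E(0)+\int_0^tE)+ C E(t)$ with a possibly large constant $C=2c_{\frkM,2}/c_{\frkM,1}$, so a direct absorption of $E(t)$ on the right is not available. I would therefore control $K$ differently, using $|K(t)|\le\frac12 m(0)^{1/2}E(t)^{1/2}$ (Cauchy--Schwarz, with mass conserved), and absorb via Young's inequality: $E(t)\le C+C\int_0^tE(s)\,\ds+\frac12E(t)$. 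Gronwall then gives $\sup_t E\le C_T$, and \eqref{eq/ bdy E} follows. This absorption is the main delicate point of the energy estimate.

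\emph{Entropy and dissipation.} The entropy inequality gives
\[
\iint f_\e\log f_\e(t)+\int_0^t\!\!\iint D_\e+\text{(boundary terms)}\le \iint f_{\e0}\log f_{\e0}+\text{data}.
\]
In case \textbf{(In)}, the boundary terms are the outgoing $\int f\log f|v_1|$ minus the incoming $g$-terms, which are finite by \eqref{eq/ in data}. In case \textbf{(Max)}, the inequality is written relative to $\frkM$, i.e. for $\iint f_\e\log(f_\e/\frkM)$. There the boundary contribution is $\ge\alpha\scrE$ by the convexity (Darroz\`es--Guiraud) inequality, and $\log\frkM$ is quadratic in $v$, so it is controlled by the energy bound.

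The negative parts are handled by the standard splitting. On $\{f\le e^{-|v|^2}\}$, use $f|\log f|\le C\sqrt f\le Ce^{-|v|^2/2}$. On the complement, $|\log f|\le|v|^2$, so $f|\log f|\le|v|^2 f$. In case \textbf{(In)}, the same splitting is applied to the outgoing trace integrals with the measure $|v_1|\,\dv\dt$, bounding them via the outgoing energy flux from step one. Collecting these gives \eqref{eq/ bdd}, \eqref{eq/ reg diss}, \eqref{eq/ out bdd}, and \eqref{eq/ darr bdd}. Here $D_\e\ge0$ and $\scrE\ge0$ (Jensen) ensure that each of these quantities is individually bounded.

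\emph{Expected obstacles.} The main obstacle is the Maxwell case. There the energy is not conserved, since the wall reemits $\frkM$, and the absorption/Gronwall argument above must be carried out with constants independent of $\e$. A secondary issue is making sure that the entropy inequality holds for every $t$ and not only for almost every $t$, so that the $\operatorname{ess\,sup}$ bound is legitimate. This follows from $f_\e\in C([0,T];L^1)$ together with lower semicontinuity of the entropy.
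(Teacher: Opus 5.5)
Your proof is correct. Case \textbf{(In)} and the entropy/negative-part bookkeeping follow the paper. The genuine difference is the energy bound in case \textbf{(Max)}, which you identify as the main obstacle and which the paper avoids altogether.

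\textbf{The paper's route.} It reads the energy bound off the relative entropy inequality \eqref{eq/ H max}. Since $\log\frkM=\log A-b|v|^2$, Young's inequality $z\log z\ge yz-e^{y-1}$ with $z=f_\e/\frkM$ and $y=a|v|^2$, $0<a<b$, gives the coercivity \eqref{eq/ H coerc}, namely $\scrH_{\frkM}[f_\e(t)]\ge a\iint_{\calO}|v|^2f_\e(t)\,\dx\dv-C_{\frkM,a}$. Dropping the nonnegative Darroz\`es--Guiraud and dissipation terms in \eqref{eq/ H max} then bounds the energy by $\scrH_{\frkM}[f_{\e0}]$ plus a constant. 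This bound is uniform in $t$, needs no Gronwall argument, and uses no $xv_1$ multiplier.

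\textbf{Your route.} You use the energy balance, the flux bound \eqref{eq/ J est}, the estimate $|K(t)|\le\frac12 m^{1/2}E(t)^{1/2}$, Young's inequality and Gronwall. This also closes. You were right not to quote \eqref{eq/ bdy E} directly, since its constant presupposes a bulk second-moment bound. The trade-offs:
\begin{itemize}
\item Your constant grows like $e^{CT}$, which is enough for the lemma. It does not give the time-uniform energy bound the paper later records in Proposition \ref{prop/ bal}.
\item Your energy bound does not use entropy admissibility, and it yields the outgoing weighted fluxes at the same time.
\end{itemize}

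\textbf{Two small points.}
\begin{itemize}
\item The upper bound on $\iint f_\e\log f_\e\,\dx\dv$ does not need the energy bound, since $-b\iint|v|^2f_\e\,\dx\dv\le0$. The lower bound on $\scrH_{\frkM}[f_\e(t)]$, which you need to extract \eqref{eq/ reg diss} and \eqref{eq/ darr bdd} from \eqref{eq/ H max}, does use the energy and negative-part bounds. You leave this step implicit; it should be stated.
\item In your splitting, on $\{f>e^{-|v|^2}\}$ the correct statement is $\log^-f\le|v|^2$, not $|\log f|\le|v|^2$. The positive part is not bounded there. The paper uses the equivalent pointwise bound \eqref{eq/ H neg}.
\end{itemize}
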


\begin{proof}
We first consider the inflow boundary condition \textbf{(In)}.

By the collision-invariant identities in \eqref{eq/ M mom}, the collision operator does not contribute to the mass or energy balance. Testing the equation with a standard radial truncation of $\la v\ra^2=1+|v|^2$ and passing to the limit in the truncation parameter, we obtain
\bq\label{eq/ E diff}
\ddt\iint_{\calO} f_\e\la v\ra^2\,\dx\dv = -\int_{\R^3} \la v\ra^2f_{\e r}v_1\,\dv +\int_{\R^3} \la v\ra^2f_{\e l}v_1\,\dv.
\eq
Using the inflow boundary condition, we decompose the two boundary terms as
\[
\begin{aligned}
-\int_{\R^3} \la v\ra^2f_{\e r}v_1\,\dv &= \int_{v_1<0} \la v\ra^2g_r|v_1|\,\dv - \int_{v_1>0} \la v\ra^2f_{\e r}v_1\,\dv,\\
\int_{\R^3} \la v\ra^2f_{\e l}v_1\,\dv &= \int_{v_1>0} \la v\ra^2g_lv_1\,\dv - \int_{v_1<0} \la v\ra^2f_{\e l}|v_1|\,\dv.
\end{aligned}
\]
Integrating \eqref{eq/ E diff} over $(0,t)$ gives
\bq\label{eq/ E in}
\begin{aligned}
&\iint_{\calO} f_\e(t)\la v\ra^2\,\dx\dv +\int_0^t\int_{v_1>0} \la v\ra^2f_{\e r}v_1\,\dv\ds +\int_0^t\int_{v_1<0} \la v\ra^2f_{\e l}|v_1|\,\dv\ds \\
&\quad= \iint_{\calO} f_{\e0}\la v\ra^2\,\dx\dv +\int_0^t\int_{v_1>0} \la v\ra^2g_lv_1\,\dv\ds +\int_0^t\int_{v_1<0} \la v\ra^2g_r|v_1|\,\dv\ds.
\end{aligned}
\eq
The right-hand side is bounded uniformly in $\e$ by \eqref{eq/ init bdd} and \eqref{eq/ in data}. In particular, \eqref{eq/ E in} yields both the uniform bulk second-moment bound and the outgoing energy-flux bound.

We next consider the entropy. Formally multiplying \eqref{eq/ regbgk} by $1+\log f_\e$, we use
\[
\int_{\R^3}(M[f_\e]-f_\e)\,\dv=0 \quad \text{and} \quad \int_{\R^3} \log M[f_\e]\, (M[f_\e]-f_\e)\,\dv=0,
\]
where the latter follows since $\log M[f_\e]$ is a linear combination of $1$, $v$, and $|v|^2$. Hence
\[
\int_{\R^3} (1+\log f_\e) \lambda_\e(\rho_{f_\e}) (M[f_\e]-f_\e)\,\dv = -\int_{\R^3} \lambda_\e(\rho_{f_\e}) (f_\e-M[f_\e] ) (\log f_\e-\log M[f_\e] )\,\dv.
\]
The entropy-admissibility of the approximations then gives
\bq\label{eq/ H in}
\begin{aligned}
&\iint_{\calO} f_\e(t)\log f_\e(t)\,\dx\dv +\int_0^t\int_{v_1>0} f_{\e r}\log f_{\e r}\,v_1\,\dv\ds +\int_0^t\int_{v_1<0} f_{\e l}\log f_{\e l}\,|v_1|\,\dv\ds \\
&\quad+ \int_0^t\iint_{\calO} \lambda_\e(\rho_{f_\e}) (f_\e-M[f_\e] ) (\log f_\e-\log M[f_\e] ) \,\dx\dv\ds \\
&\quad \quad\le \iint_{\calO} f_{\e0}\log f_{\e0}\,\dx\dv +\int_0^t\int_{v_1>0} g_l\log g_l\,v_1\,\dv\ds +\int_0^t\int_{v_1<0} g_r\log g_r\,|v_1|\,\dv\ds.
\end{aligned}
\eq

To control the negative parts of the entropy, let $\log^-z:=\max\{0,-\log z\}$. For every $a>0$ and $z\ge0$,
\bq\label{eq/ H neg}
z \log^-z \le \frac{a}{2}\la v\ra^2 z +e^{-1} e^{-\frac a2 \la v\ra^2}.
\eq
Indeed, this follows by applying $r\log^-r\le e^{-1}$ to $r=z e^{\frac a2\la v\ra^2}$. Since $\int_{\R^3} e^{-\frac a2\la v\ra^2}(1 + |v_1|)\,\dv<\infty$,   the bulk and outgoing negative entropy terms are controlled by \eqref{eq/ E in}. The incoming negative entropy terms are controlled in the same way using \eqref{eq/ in data}. Thus \eqref{eq/ H in}, together with the assumptions on $f_{\e0}$ and the inflow data, yields
\begin{align*}
&\operatorname*{ess\,sup}_{0<t<T} \iint_{\calO} f_\e(t)|\log f_\e(t)|\,\dx\dv + \int_0^T\int_{v_1>0} f_{\e r}|\log f_{\e r}|v_1\,\dv\dt + \int_0^T\int_{v_1<0} f_{\e l}|\log f_{\e l}||v_1|\,\dv\dt\\
&\quad+ \int_0^T\iint_{\calO} \lambda_\e(\rho_{f_\e}) (f_\e-M[f_\e] ) (\log f_\e-\log M[f_\e] ) \,\dx\dv\dt \le C_T.
\end{align*}
Together with \eqref{eq/ E in}, this proves \eqref{eq/ bdd}, \eqref{eq/ reg diss}, and \eqref{eq/ out bdd} in case \textbf{(In)}.

We now turn to the Maxwell boundary condition \textbf{(Max)}. Let $h(z):=z\log z$ and introduce the relative entropy with respect to the fixed boundary Maxwellian $\frkM$,
\[
\scrH_{\frkM}[f_\e] := \iint_{\calO} h(\frac{f_\e}{\frkM})\frkM\,\dx\dv.
\]
At the formal level, multiplication of \eqref{eq/ regbgk} by $h'(\frac{f_\e}{\frkM})$ gives
\bq\label{eq/ test hprime}
\ddt\scrH_{\frkM}[f_\e] = -\int_{\R^3} h(\frac{f_{\e r}}{\frkM})\frkM v_1\,\dv +\int_{\R^3} h(\frac{f_{\e l}}{\frkM})\frkM v_1\,\dv + \iint_{\calO} h'(\frac{f_\e}{\frkM}) \lambda_\e(\rho_{f_\e}) (M[f_\e]-f_\e ) \,\dx\dv.
\eq
The entropy-admissibility assumption means that the integrated form of the resulting inequality is valid for the approximate solutions.

We first identify the bulk dissipation term. Since $h'(z)=1+\log z$, we have
\bq\label{eq/ H diss}
\begin{aligned}
\int_{\R^3} h'(\frac{f_\e}{\frkM}) (M[f_\e]-f_\e )\,\dv &= \int_{\R^3} \lt( 1+\log f_\e-\log\frkM \rt) (M[f_\e]-f_\e )\,\dv  \\
&= \int_{\R^3} \log f_\e (M[f_\e]-f_\e )\,\dv \\
&= \int_{\R^3} ( \log f_\e-\log M[f_\e]  ) (M[f_\e]-f_\e )\,\dv. 
\end{aligned}
\eq
Both cancellations follow from \eqref{eq/ M mom}, since $\log\frkM$ and $\log M[f_\e]$ are linear combinations of the collision invariants.

We next estimate the boundary entropy flux. We give the details at $x=-\frac12$; the computation at $x=\frac12$ is analogous. We use the Darroz\`es--Guiraud argument for the Maxwell boundary condition; see \cite[Lemma 2.2]{Mis10}. Since $\frkM$ is centered, $\frkM(\scrV v)=\frkM(v)$, and the normalization \eqref{eq/ M norm} gives
\[
\int_{v_1>0}\frkM(v)v_1\,\dv = \int_{v_1<0}\frkM(v)|v_1|\,\dv =1.
\]
Using the Maxwell boundary condition for $v_1>0$, we write
\bq\label{eq/ darr first}
\begin{aligned}
\int_{\R^3} h\lt(\frac{f_{\e l}}{\frkM}\rt) \frkM v_1\,\dv &= \int_{v_1>0} h\lt( \frac{ (1-\alpha_l)f_{\e l}(\scrV v) +\alpha_l\frkM(v)J_{\e l}(t) }{\frkM(v)} \rt) \frkM(v)v_1\,\dv  \\
&\quad+ \int_{v_1<0} h\lt(\frac{f_{\e l}}{\frkM}\rt) \frkM v_1\,\dv. 
\end{aligned}
\eq
Since $h$ is convex, we get
\[
\begin{aligned}
h\lt( (1-\alpha_l) \frac{f_{\e l}(\scrV v)}{\frkM(v)} +\alpha_lJ_{\e l}(t) \rt) &\le (1-\alpha_l) h\lt( \frac{f_{\e l}(\scrV v)}{\frkM(v)} \rt) + \alpha_lh(J_{\e l}(t)).
\end{aligned}
\]
Substituting this into \eqref{eq/ darr first} yields
\begin{align*}
\int_{\R^3} h\lt(\frac{f_{\e l}}{\frkM}\rt) \frkM v_1\,\dv &\le (1-\alpha_l) \int_{v_1>0} h\lt( \frac{f_{\e l}(\scrV v)}{\frkM(v)} \rt) \frkM(v)v_1\,\dv\\
&\quad+ \alpha_lh(J_{\e l}(t)) \int_{v_1>0}\frkM(v)v_1\,\dv + \int_{v_1<0} h\lt(\frac{f_{\e l}}{\frkM}\rt) \frkM v_1\,\dv.
\end{align*}
Under the change of variables $v\mapsto\scrV v$, we obtain
\[
\begin{aligned}
&\int_{v_1>0} h\lt( \frac{f_{\e l}(\scrV v)}{\frkM(v)} \rt) \frkM(v)v_1\,\dv = \int_{v_1<0} h\lt( \frac{f_{\e l}(v)}{\frkM(v)} \rt) \frkM(v)|v_1|\,\dv.
\end{aligned}
\]
It follows that
\bq\label{eq/ darr}
\int_{\R^3} h\lt(\frac{f_{\e l}}{\frkM}\rt) \frkM v_1\,\dv \le -\alpha_l \lt[ \int_{v_1<0} h\lt(\frac{f_{\e l}}{\frkM}\rt) \frkM|v_1|\,\dv - h(J_{\e l}(t)) \rt] = -\alpha_l\scrE_{\e l}(t).
\eq

Let us also verify explicitly that $\scrE_{\e l}(t)\ge0$. If $J_{\e l}(t)>0$, then
\[
\scrE_{\e l}(t) = \int_{v_1<0} f_{\e l} \log\frac{f_{\e l}} {J_{\e l}(t)\frkM} |v_1|\,\dv = J_{\e l}(t) \int_{v_1<0} \frac{f_{\e l}} {J_{\e l}(t)\frkM} \log \frac{f_{\e l}} {J_{\e l}(t)\frkM} \frkM|v_1|\,\dv.
\]
Since $\frkM|v_1|\,\dv$ is a probability measure on $\{v_1<0\}$ and
\[
\int_{v_1<0} \frac{f_{\e l}} {J_{\e l}(t)\frkM} \frkM|v_1|\,\dv =1,
\]
Jensen's inequality gives $\scrE_{\e l}(t) \ge J_{\e l}(t)h(1) =0$. When $J_{\e l}(t)=0$, we set $\scrE_{\e l}(t)=0$. The same argument at $x=\frac12$ gives
\[
\int_{\R^3} h(\frac{f_{\e r}}{\frkM})\frkM v_1\,\dv \ge \alpha_r\scrE_{\e r}(t) \ge0.
\]
Notice that the sign is reversed at the right endpoint due to the orientation of the boundary flux.

Combining \eqref{eq/ test hprime}, \eqref{eq/ H diss}, and \eqref{eq/ darr}, together with the analogous estimate at $x=\frac12$, gives
\bq\label{eq/ H max}
\begin{aligned}
&\scrH_{\frkM}[f_\e(t)] +\int_0^t ( \alpha_l\scrE_{\e l}(s) +\alpha_r\scrE_{\e r}(s)  )\,\ds \\
&\quad + \int_0^t\iint_{\calO} \lambda_\e(\rho_{f_\e}) (f_\e-M[f_\e] ) (\log f_\e-\log M[f_\e] ) \,\dx\dv\ds \\
&\quad \quad\le \scrH_{\frkM}[f_{\e0}].
\end{aligned}
\eq
We now extract the second-moment estimate from the relative entropy. Write
\[
\frkM(v)=A e^{-b|v|^2}, \quad A,b>0.
\]
For $0<a<b$, Young's inequality $z\log z\ge yz-e^{y-1}$ for $z\ge0$, $y\in\R$, applied with $z=\frac{f_\e}{\frkM}$, $y=a|v|^2$, gives
\[
\begin{aligned}
h(\frac{f_\e}{\frkM})\frkM \ge a|v|^2f_\e - e^{-1}\frkM e^{a|v|^2} = a|v|^2f_\e - A e^{-1}e^{-(b-a)|v|^2}.
\end{aligned}
\]
Hence
\bq\label{eq/ H coerc}
\scrH_{\frkM}[f_\e(t)] \ge a\iint_{\calO}|v|^2f_\e(t)\,\dx\dv -C_{\frkM,a},
\eq
where $C_{\frkM,a}<\infty$ is independent of $\e$ and $t$.

The initial relative entropy is uniformly bounded from above. Indeed, since $\log\frkM(v)=\log A-b|v|^2$, we have
\[
\scrH_{\frkM}[f_{\e0}] = \iint_{\calO} f_{\e0}\log f_{\e0}\,\dx\dv -(\log A) \iint_{\calO}f_{\e0}\,\dx\dv + b\iint_{\calO}|v|^2f_{\e0}\,\dx\dv,
\]
which is bounded uniformly in $\e$ by the assumptions on the approximate initial data. The Maxwell boundary condition preserves the total mass, and hence
\[
\iint_{\calO}f_\e(t)\,\dx\dv = \iint_{\calO}f_{\e0}\,\dx\dv
\]
for almost every $t>0$. In particular, the mass is bounded uniformly in $\e$. Combining this fact with \eqref{eq/ H max} and \eqref{eq/ H coerc} gives
\bq\label{eq/ E max}
\sup_{\e>0}\, \operatorname*{ess\,sup}_{0<t<T} \iint_{\calO} \la v\ra^2f_\e(t)\,\dx\dv \le C_T.
\eq

We next recover the absolute entropy. Since
\[
\scrH_{\frkM}[f_\e] = \iint_{\calO}f_\e\log f_\e\,\dx\dv - \iint_{\calO}f_\e\log\frkM\,\dx\dv \quad \text{and} \quad \log\frkM=\log A-b|v|^2,
\]
the upper bound furnished by \eqref{eq/ H max}, together with mass conservation, gives a uniform upper bound on the signed entropy
\[
\iint_{\calO}f_\e\log f_\e\,\dx\dv.
\]
On the other hand, \eqref{eq/ H neg} and \eqref{eq/ E max} yield
\[
\sup_{\e>0}\, \operatorname*{ess\,sup}_{0<t<T} \iint_{\calO} f_\e(t)\log^-f_\e(t)\,\dx\dv \le C_T.
\]
It follows that
\[
\sup_{\e>0}\, \operatorname*{ess\,sup}_{0<t<T} \iint_{\calO} f_\e(t)|\log f_\e(t)|\,\dx\dv \le C_T.
\]
This proves \eqref{eq/ bdd} in case \textbf{(Max)}.

Finally, the coercive lower bound \eqref{eq/ H coerc} and the uniform upper bound on $\scrH_{\frkM}[f_{\e0}]$ allow us to return to \eqref{eq/ H max}. Since both $\scrE_{\e l},\scrE_{\e r}$ and
\[
\lambda_\e(\rho_{f_\e}) (f_\e-M[f_\e] ) (\log f_\e-\log M[f_\e] )
\]
are nonnegative, we obtain
\[
\int_0^T ( \alpha_l\scrE_{\e l}(t) +\alpha_r\scrE_{\e r}(t)  )\,\dt \le C_T
\]
and
\[
\int_0^T\iint_{\calO} \lambda_\e(\rho_{f_\e}) (f_\e-M[f_\e] ) (\log f_\e-\log M[f_\e] ) \,\dx\dv\dt \le C_T.
\]
These are precisely \eqref{eq/ darr bdd} and \eqref{eq/ reg diss}. This completes the proof.
\end{proof}

As a consequence of \eqref{eq/ bdd} and Lemma \ref{lem/ j bounds}, the approximate traces also satisfy
\bq\label{eq/ bdy E reg}
\begin{aligned}
&\sup_{\e>0} \alpha_l \int_0^T\int_{v_1<0} \la v\ra^2f_{\e l}|v_1|\,\dv\dt + \sup_{\e>0} \alpha_r \int_0^T\int_{v_1>0} \la v\ra^2f_{\e r}v_1\,\dv\dt \le C_T,
\end{aligned}
\eq
where $\alpha_l=\alpha_r=1$ in case \textbf{(In)}.

For later use, we write
\[
\scrH[f_\e] := \iint_{\calO}f_\e\log f_\e\,\dx\dv, \quad \scrD_\e[f_\e] := \iint_{\calO}D_\e[f_\e]\,\dx\dv,
\]
where
\[
D_\e[f_\e] := \lambda_\e(\rho_{f_\e}) (f_\e-M[f_\e] ) (\log f_\e-\log M[f_\e] ).
\]

%
%
%
%
%
%
%
%
%
%
%

\subsection{A de la Vall\'ee Poussin weight}

The classical de la Vall\'ee Poussin criterion provides a superlinear weight for a uniformly integrable family; see, for instance, \cite{Lau15}. Here we require in addition a quantitative curvature bound on the weight. This additional property will be used to control the Maxwellian Jensen gap in Section \ref{subsec/ propagation}. We give a direct construction.

\begin{lemma}\label{lem/ dlVP}
Let $(f_{\e0})$ be a nonnegative sequence satisfying \eqref{eq/ init E}. Then there exists $\Phi:[0,\infty)\to[0,\infty)$ with the following properties.
\begin{enumerate}

\item[(i)] $\Phi\in C^\infty([0,\infty))$ and $\Phi(0)=\Phi'(0)=0$. 

\item[(ii)] $\Phi$ is convex and $\Phi'$ is increasing and concave. Moreover, for every $z>0$,
\begin{align}
&\frac{\Phi(z)}{z} \le \Phi'(z) \le 2\frac{\Phi(z)}{z}, \label{eq/ Phi p}\\
&0\le \Phi''(z) \le \min\lt\{ \frac{\Phi'(z)}{z}, \frac{C}{z} \rt\} \quad\text{for some }C>0, \label{eq/ Phi pp}\\
&\lim_{z\to\infty}\frac{\Phi(z)}{z} =\infty. \label{eq/ Phi sup}
\end{align}

\item[(iii)] The initial data satisfy
\bq\label{eq/ init tight}
\sup_{\e>0} \iint_{\calO} \Phi(\la v\ra^2)f_{\e0}\,\dx\dv \le1.
\eq

\end{enumerate}

\end{lemma}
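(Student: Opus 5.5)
We build $\Phi$ from a slowly growing, piecewise constant slope that is then smoothed. The input is uniform integrability of the tails. Set
\[
\mu_\e(z):=\iint_{\calO}\la v\ra^2 f_{\e0}\one_{\{\la v\ra^2>z\}}\,\dx\dv .
\]
By \eqref{eq/ init E}, the family $\{\la v\ra^2 f_{\e0}\}$ converges strongly in $L^1(\calO)$, hence is uniformly integrable and tight. Therefore $\eta(z):=\sup_{\e>0}\mu_\e(z)\to0$ as $z\to\infty$. Choose an increasing sequence $1\le z_1<z_2<\dots$ with $\eta(z_k)\le 2^{-k}/K$, where $K$ is a normalizing constant fixed at the end. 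We additionally require the geometric separation $z_{k+1}\ge 2z_k$; enlarging each $z_k$ is always allowed, since $\eta$ is nonincreasing.

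Next define the slope. Let $\psi(z):=\#\{k:z_k\le z\}$, so $\psi$ is a nondecreasing step function with $\psi\to\infty$, and put
\[
\Phi_0(z):=\int_0^z\psi(s)\,\ds .
\]
Then $\Phi_0$ is convex and $\Phi_0(z)\le z\psi(z)=\sum_k z\one_{\{z\ge z_k\}}$. This gives
\[
\iint\Phi_0(\la v\ra^2)f_{\e0}\le\sum_k\mu_\e(z_k)\le \frac1K\sum_k2^{-k}\le 1,
\]
uniformly in $\e$, which is \eqref{eq/ init tight}. Superlinearity holds because $\Phi_0(z)/z\ge\frac12\psi(z/2)\to\infty$.

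The derivative conditions force smoothing. We replace the jumps of $\psi$ by a concave increasing $C^\infty$ function $\phi$ with $\phi(0)=0$ and $\phi\le\psi$ (possibly after a harmless shift of indices), and then set $\Phi(z)=\int_0^z\phi$. A natural choice takes $\phi$ to be a smoothed version of the piecewise linear interpolant of the points $(z_k,k-1)$ together with the origin. Because $z_{k+1}\ge2z_k$, consecutive slopes $1/(z_{k+1}-z_k)$ decrease, so the interpolant is concave. Each slope is at most $1/z_k$, so $\phi'(z)\le C/z$ on every segment. Concavity together with $\phi(0)=0$ gives $\phi(z)\ge z\phi'(z)$, which is $\Phi''\le\Phi'/z$. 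It also gives $\Phi(z)=\int_0^z\phi\le z\phi(z)$ and, since $\phi$ is increasing, $\Phi(z)\ge \frac z2\phi(z/2)\ge\frac z2\cdot\frac12\phi(z)$. That lower bound would only yield $\Phi'\le 4\Phi/z$.

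To get the sharp constant in \eqref{eq/ Phi p}, we use concavity directly: $\phi(s)\ge (s/z)\phi(z)$ for $s\le z$. Integrating gives $\Phi(z)\ge \frac z2\phi(z)$, i.e. $\Phi'\le 2\Phi/z$. The bound $\Phi\le z\Phi'$ is convexity. Smoothness at $0$ comes from making $\phi$ identically $0$ near the origin and using standard convolution smoothing in a way that preserves concavity and the slope bounds. Flatness near $0$ also gives $\Phi(0)=\Phi'(0)=0$, and $\phi\le\psi$ preserves the integral bound.

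The main obstacle is reconciling smoothness near $z=0$ and at the kinks with concavity and the bound $\Phi''\le C/z$. A concave function that is identically $0$ near the origin cannot then increase, so instead we let $\phi$ start linearly: $\phi(z)=z/z_1\cdot\frac12$ on $[0,z_1]$, smoothed at the kinks by mollifying with a scale proportional to $z$. On $[0,z_1]$ this gives $\Phi''\le 1/z_1\le C/z$ and $\Phi\le z^2$. That region contributes at most $z_1\cdot\text{mass}$ to the integral, which we absorb by enlarging $K$, or equivalently by scaling $\Phi$ by a small constant, since every property in the statement is invariant under positive scaling.
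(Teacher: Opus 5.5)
Your route is viable, but it differs from the paper's. The paper never smooths anything. It sets $\Phi'(z)=\sum_k z/(a_k+z)$ with $a_{k+1}\ge 2a_k$, so the following are automatic:
\begin{itemize}
\item smoothness up to $z=0$, monotonicity and concavity of $\Phi'$, and $\Phi'(0)=0$;
\item the curvature bound, which is the one-line estimate $\Phi''(z)=\sum_k a_k/(a_k+z)^2\le 4/z$ from doubling;
\item (iii), which follows termwise from $\Phi\le z\Phi'$: the soft cutoff satisfies $z/(a_k+z)\le \la R_k\ra^2/a_k$ on $\{|v|\le R_k\}$, so even the bulk contributes less than $2^{-k}$ and no final rescaling is needed.
\end{itemize}
Your hard-threshold counting function $\psi$ makes the tail bound transparent. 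The price is that all the work moves into producing a smooth concave slope, which you describe rather than carry out. Both arguments rest on geometric separation of scales for $\Phi''\le C/z$, and on concavity of $\Phi'$ with $\Phi'(0)=0$ for the sharp constant in \eqref{eq/ Phi p}; you prove the latter exactly as the paper does.

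As written, several steps need repair.
\begin{itemize}
\item \emph{Concavity of the interpolant.} The interpolant through the origin and the nodes $(z_k,k-1)$ has a flat first segment, so it is not concave. Your later start $\phi(z)=z/(2z_1)$ on $[0,z_1]$ need not join concavely with the following segments under mere doubling. Take instead the nodes $(0,0)$ and $(z_k,k)$, $k\ge1$. Their slopes $1/z_1\ge 1/(z_2-z_1)\ge\cdots$ are nonincreasing when $z_{k+1}\ge 2z_k$, and $\phi\le\psi+1$.
\item \emph{The bound $\phi'\le C/z$.} A slope of at most $1/z_k$ does not give $\phi'\le C/z$ on $[z_k,z_{k+1}]$, because $z$ can be much larger than $z_k$ there. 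Use $z_{k+1}-z_k\ge z_{k+1}/2\ge z/2$ instead, which gives $\phi'\le 2/z$.
\item \emph{The smoothing.} Make it explicit as the multiplicative mollification $\tilde\phi(z)=\int\phi(tz)\kappa(t)\,\dd t$, with $0\le\kappa\in C_c^\infty((\frac12,\frac32))$ and $\int\kappa=1$. It is an average of the concave nondecreasing functions $z\mapsto\phi(tz)$, so it is concave and nondecreasing with $\tilde\phi(0)=0$. It is smooth for $z>0$ and linear on $[0,\frac23 z_1]$, hence smooth up to $0$. Finally, $\tilde\phi'(z)=\int t\,\phi'(tz)\kappa(t)\,\dd t\le 2/z$.
\item \emph{Normalization.} Absorbing the region $\{\la v\ra^2\le z_1\}$ by enlarging $K$ is circular with your bound ``$z_1$ times the mass'', since that bound grows with $z_1$. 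Rescaling is the clean fix. Choose $z_k$ with $\sup_\e\iint_{\{\la v\ra^2\ge z_k/2\}}\la v\ra^2 f_{\e0}\,\dx\dv\le 2^{-k}$. Then $\Phi(z)\le z\tilde\phi(z)\le z\,(1+\psi(2z))$ gives $\sup_\e\iint\Phi(\la v\ra^2)f_{\e0}\,\dx\dv\le M_0+1$, where $M_0:=\sup_\e\iint\la v\ra^2 f_{\e0}\,\dx\dv$. Divide $\Phi$ by $M_0+1$; this preserves (i)--(ii) with a rescaled constant $C$.
\end{itemize}
With these changes your argument proves the lemma.
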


\begin{proof}
Since \eqref{eq/ init E} assumes strong convergence in $L^1(\calO,\la v\ra^2\,\dx\dv)$, the corresponding energy tails are uniformly small. More precisely,
\bq\label{eq/ init tail}
\lim_{R\to\infty} \sup_{\e>0} \iint_{I\times\{|v|>R\}} \la v\ra^2 f_{\e0}\,\dx\dv =0.
\eq
Indeed, let $\eta>0$. By \eqref{eq/ init E}, for all sufficiently small $\e$ along the fixed sequence,
\[
\iint_{\calO} \la v\ra^2|f_{\e0}-f_0|\,\dx\dv <\frac{\eta}{2}.
\]
Since $\la v\ra^2f_0\in L^1(\calO)$, we may choose $R>0$ large enough that
\[
\iint_{I\times\{|v|>R\}} \la v\ra^2f_0\,\dx\dv <\frac{\eta}{2}.
\]
Hence, for all sufficiently small $\e$ along the fixed sequence,
\[
\iint_{I\times\{|v|>R\}} \la v\ra^2f_{\e0}\,\dx\dv <\eta.
\]
The finitely many remaining elements are treated by increasing $R$ once more. This proves \eqref{eq/ init tail}.

Set
\[
M_0:= \sup_{\e>0} \iint_{\calO}\la v\ra^2f_{\e0}\,\dx\dv<\infty.
\]
We construct $\Phi'$ as a series. For $k\ge1$, let
\[
\phi_k(z):=\frac{z}{a_k+z},
\]
where the positive sequence $(a_k)$ will be chosen recursively. Using \eqref{eq/ init tail}, choose $R_k$ large enough that
\[
\sup_{\e>0} \iint_{I\times\{|v|>R_k\}} \la v\ra^2f_{\e0}\,\dx\dv <\frac1{2^{k+1}}.
\]
We then choose $a_k$ sufficiently large such that
\begin{equation}\label{eq/ doubling}
a_1\ge2,\quad a_{k+1}\ge2a_k, \quad \text{and} \quad \frac{\la R_k\ra^2}{a_k}M_0<\frac1{2^{k+1}}.
\end{equation}
This yields, for every $\e$,
\bq\label{eq/ phik}
\iint_{\calO} \la v\ra^2 \phi_k(\la v\ra^2) f_{\e0}\,\dx\dv \le \frac{\la R_k\ra^2}{a_k} \iint_{\calO}\la v\ra^2f_{\e0}\,\dx\dv + \iint_{I\times\{|v|>R_k\}} \la v\ra^2f_{\e0}\,\dx\dv <\frac1{2^{k}}.
\eq

Define
\[
\Phi'(z):=\sum_{k\ge1}\phi_k(z), \quad \Phi(z):=\int_0^z\Phi'(s)\,\ds.
\]
The doubling condition on $(a_k)$ guarantees convergence of this series and all its derivatives uniformly on compact subsets of $[0,\infty)$. Indeed, for $m\ge1$, $|\phi_k^{(m)}(z)| \le \frac{m!}{a_k^m}$, while on every bounded interval $0\le z\le L$, $0\le\phi_k(z)\le\frac{L}{a_k}$. Hence $\Phi\in C^\infty([0,\infty))$.

Since each $\phi_k$ is increasing and concave, $\Phi'$ has the same properties. Also, for any fixed $N$,
\[
\liminf_{z\to\infty}\Phi'(z) \ge \lim_{z\to\infty}\sum_{k=1}^N\phi_k(z) =N,
\]
and hence $\Phi'(z)\xrightarrow[z\to\infty]{}\infty$. Together with the comparison between $\frac{\Phi(z)}z$ and $\Phi'(z)$ proved below, this gives the superlinearity needed for velocity tightness.

By monotone convergence and \eqref{eq/ phik}, we estimate
\[
\iint_{\calO} \Phi(\la v\ra^2)f_{\e0}\,\dx\dv \le \iint_{\calO} \la v\ra^2\Phi'(\la v\ra^2) f_{\e0}\,\dx\dv = \sum_{k\ge1} \iint_{\calO} \la v\ra^2\phi_k(\la v\ra^2) f_{\e0}\,\dx\dv \le1.
\]
This proves \eqref{eq/ init tight}.

The monotonicity of $\Phi'$ and the identity $\Phi(0)=0$ give $\Phi(z)\le z\Phi'(z)$. Since $\Phi'$ is concave and $\Phi'(0)=0$,
\[
\frac{\Phi'(s)}{s} \ge \frac{\Phi'(z)}{z}, \quad 0<s<z.
\]
Consequently, we obtain
\[
\Phi(z) = \int_0^z\Phi'(s)\,\ds \ge \frac{\Phi'(z)}{z} \int_0^z s\,\ds = \frac12z\Phi'(z),
\]
which proves \eqref{eq/ Phi p}. In particular, \eqref{eq/ Phi p} and $\Phi'(z)\to\infty$ imply \eqref{eq/ Phi sup}.

It remains to prove the second estimate in \eqref{eq/ Phi pp}. Since
\[
\Phi''(z) = \sum_{k\ge1} \frac{a_k}{(a_k+z)^2},
\]
we split
\bq\label{eq/ split sum}
\Phi''(z) \le \frac1{z^2} \sum_{\{k:a_k\le z\}}a_k + \sum_{\{k:a_k>z\}}\frac1{a_k}.
\eq
If $0<z<a_1$, the first sum is empty and
\[
\Phi''(z) \le \sum_{k\ge1}\frac1{a_k} \le \frac2{a_1} \le \frac2z.
\]
If $z\ge a_1$, let $k_*:=\max\{k:a_k\le z\}$. The doubling property (recalling \eqref{eq/ doubling}) gives
\[
\sum_{k\le k_*}a_k\le2z, \quad \sum_{k>k_*}\frac1{a_k}\le\frac2z.
\]
Inserting these bounds into \eqref{eq/ split sum} yields $\Phi''(z)\le\frac4z$. Finally, the concavity of $\Phi'$ and $\Phi'(0)=0$ imply
\[
\Phi''(z)\le\frac{\Phi'(z)}{z}.
\]
This proves \eqref{eq/ Phi pp}. This completes the proof.
\end{proof}

The same function can be chosen to control the incoming energy tails.

\begin{corollary}\label{cor/ in Phi}
Assume that the inflow data satisfy \eqref{eq/ in data}. Then $\Phi$ in Lemma \ref{lem/ dlVP} can be chosen such that, in addition,
\[
\int_0^\infty\int_{v_1<0} \Phi(\la v\ra^2)g_r|v_1|\,\dv\dt \le1, \quad \int_0^\infty\int_{v_1>0} \Phi(\la v\ra^2)g_lv_1\,\dv\dt \le1.
\]
\end{corollary}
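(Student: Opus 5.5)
The plan is to rerun the construction in the proof of Lemma \ref{lem/ dlVP}, now imposing tail smallness simultaneously for the initial data and for the two inflow fluxes. The inflow data are fixed functions, not sequences. By \eqref{eq/ in data}, the measures $\la v\ra^2 g_l v_1\,\dv\dt$ on $(0,\infty)\times\{v_1>0\}$ and $\la v\ra^2 g_r|v_1|\,\dv\dt$ on $(0,\infty)\times\{v_1<0\}$ are therefore finite. Dominated convergence then gives
\[
\lim_{R\to\infty}\int_0^\infty\int_{\{v_1>0,\,|v|>R\}}\la v\ra^2 g_l v_1\,\dv\dt=0,
\]
and the same holds for $g_r$. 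I set
\[
M_b:=\int_0^\infty\int_{v_1>0}\la v\ra^2 g_lv_1\,\dv\dt+\int_0^\infty\int_{v_1<0}\la v\ra^2 g_r|v_1|\,\dv\dt<\infty .
\]

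In the recursive choice of $(R_k,a_k)$, I will take $R_k$ so large that three tails lie below $2^{-(k+1)}$ at once: the initial tail $\sup_\e\iint_{I\times\{|v|>R_k\}}\la v\ra^2f_{\e0}$, and the two boundary tails above. Then I choose $a_k$ to satisfy the doubling condition \eqref{eq/ doubling} together with
\[
\frac{\la R_k\ra^2}{a_k}\max\{M_0,M_b\}<\frac1{2^{k+1}} .
\]
Taking $R_k$ and $a_k$ larger only strengthens every requirement, so all the constraints can be met together. The estimate \eqref{eq/ phik} then holds verbatim for each of the three quantities. On $\{|v|\le R_k\}$ one uses $\phi_k(\la v\ra^2)\le \la R_k\ra^2/a_k$, and on the complement one uses $\phi_k\le 1$.

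Define $\Phi'=\sum_k\phi_k$ and $\Phi=\int_0^z\Phi'$ exactly as before. Properties (i)--(ii) and \eqref{eq/ init tight} are proved exactly as in Lemma \ref{lem/ dlVP}, since those arguments use only the concavity of each $\phi_k$ and the doubling of $(a_k)$. For the boundary bounds, I combine $\Phi(z)\le z\Phi'(z)$ with monotone convergence:
\[
\int_0^\infty\int_{v_1<0}\Phi(\la v\ra^2)g_r|v_1|\,\dv\dt\le\sum_{k\ge1}\int_0^\infty\int_{v_1<0}\la v\ra^2\phi_k(\la v\ra^2)g_r|v_1|\,\dv\dt\le\sum_{k\ge1}2^{-k}=1 .
\]
The bound for $g_l$ is identical.

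I see no real obstacle here. The only point needing care is that one common sequence $(a_k)$ must serve all three families at once. This works because every requirement is monotone in $R_k$ and $a_k$, so each step of the recursion can simply take the maximum of the individual thresholds.
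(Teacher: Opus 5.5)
Your proposal is correct and follows essentially the same route as the paper: rerun the construction of Lemma \ref{lem/ dlVP}, with $R_k$ chosen so that the initial tail and both inflow energy tails are small simultaneously, then take $a_k$ large (keeping the doubling condition) to absorb the bounded-velocity part, and conclude via $\Phi(z)\le z\Phi'(z)$ and monotone convergence. The differences are only cosmetic: you use thresholds $2^{-(k+1)}$ and one combined constant $M_b$, while the paper uses $2^{-(k+2)}$ and separate constants $M_l,M_r$; both choices give the bound $\le 1$.
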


\begin{proof}
Set
\[
M_0:= \sup_{\e>0} \iint_{\calO}\la v\ra^2f_{\e0}\,\dx\dv, \quad  M_l:= \int_0^\infty\int_{v_1>0} \la v\ra^2g_l v_1\,\dv\dt, \quad M_r:= \int_0^\infty\int_{v_1<0} \la v\ra^2g_r |v_1|\,\dv\dt.
\]
These quantities are finite by \eqref{eq/ init bdd} and \eqref{eq/ in data}. Moreover,
\begin{align*}
\lim_{R\to\infty} \int_0^\infty\int_{\substack{v_1>0\\
|v|>R}} \la v\ra^2g_l v_1\,\dv\dt=0,\quad \lim_{R\to\infty} \int_0^\infty\int_{\substack{v_1<0\\
|v|>R}} \la v\ra^2g_r |v_1|\,\dv\dt=0.
\end{align*}

We repeat the construction in Lemma \ref{lem/ dlVP}, choosing the sequence $(R_k)$ such that the three tail estimates
\begin{align*}
&\sup_{\e>0} \iint_{I\times\{|v|>R_k\}} \la v\ra^2f_{\e0}\,\dx\dv<\frac1{2^{k+2}},\\
& \int_0^\infty\int_{\substack{v_1>0\\ |v|>R_k}} \la v\ra^2g_l v_1\,\dv\dt<\frac1{2^{k+2}},\\
&\int_0^\infty\int_{\substack{v_1<0\\ |v|>R_k}} \la v\ra^2g_r |v_1|\,\dv\dt<\frac1{2^{k+2}}
\end{align*}
hold simultaneously.

After $R_k$ has been fixed, choose $a_k$ sufficiently large such that $a_{k+1}\ge2a_k$ and
\[
\frac{\la R_k\ra^2}{a_k} \max\{M_0,M_l,M_r\} <\frac1{2^{k+2}}.
\]
With $\phi_k(z)=\frac{z}{a_k+z}$, the same decomposition as in \eqref{eq/ phik} gives
\begin{align*}
\iint_{\calO} \la v\ra^2\phi_k(\la v\ra^2) f_{\e0}\,\dx\dv &<\frac1{2^{k+1}},\\
\int_0^\infty\int_{v_1>0} \la v\ra^2\phi_k(\la v\ra^2) g_l v_1\,\dv\dt &<\frac1{2^{k+1}},\\
\int_0^\infty\int_{v_1<0} \la v\ra^2\phi_k(\la v\ra^2) g_r |v_1|\,\dv\dt &<\frac1{2^{k+1}}.
\end{align*}
Define $\Phi'$ and $\Phi$ from this common sequence $(a_k)$ exactly as in Lemma \ref{lem/ dlVP}. Since
\[
\Phi(z)\le z\Phi'(z) =z\sum_{k\ge1}\phi_k(z),
\]
monotone convergence gives
\[
\sup_{\e>0} \iint_{\calO}\Phi(\la v\ra^2)f_{\e0}\,\dx\dv \le1,\quad \int_0^\infty\int_{v_1>0} \Phi(\la v\ra^2)g_l v_1\,\dv\dt \le1,\quad \int_0^\infty\int_{v_1<0} \Phi(\la v\ra^2)g_r |v_1|\,\dv\dt \le1.
\]
The remaining properties of $\Phi$ are unchanged, since they depend only on the doubling condition \eqref{eq/ doubling} for $(a_k)$. This completes the proof.
\end{proof}

%
%
%
%
%
%
%
%
%
%
%

\subsection{Propagation of the weighted energy}\label{subsec/ propagation}

We now propagate the de la Vall\'ee Poussin bound along the approximate solutions.

\begin{proposition}\label{prop/ super}
Let $f_\e$ be the entropy-admissible approximate solutions introduced in Section \ref{sec/ approximate}, and let $\Phi$ be given by Lemma \ref{lem/ dlVP}. In case \textbf{(In)}, choose the common function $\Phi$ provided by Corollary \ref{cor/ in Phi}. Then, for every $T>0$,
\bq\label{eq/ propagation}
\begin{aligned}
&\sup_{\e>0} \operatorname*{ess\,sup}_{0<t<T} \iint_{\calO} \Phi(\la v\ra^2)f_\e(t)\,\dx\dv \\ 
&\quad + \sup_{\e>0}\, \alpha_l \int_0^T\int_{v_1<0} \Phi(\la v\ra^2)f_{\e l}|v_1|\,\dv\dt + \sup_{\e>0}\, \alpha_r \int_0^T\int_{v_1>0} \Phi(\la v\ra^2)f_{\e r}v_1\,\dv\dt  \\
&\quad \quad\le C_T, 
\end{aligned}
\eq
where $\alpha_l=\alpha_r=1$ in case \textbf{(In)} and $C_T$ is independent of $\e$.
\end{proposition}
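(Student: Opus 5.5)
The plan is to test the approximate equation \eqref{eq/ regbgk} with $\Phi(\la v\ra^2)$ and control the collision contribution through a Jensen-gap bound by $\rho\theta$. Proposition \ref{prop/ rho2} (with $\lambda=\lambda_\e$, which satisfies $0\le\lambda_\e(z)\le z$) then makes the resulting source integrable in time uniformly in $\e$. Since $\Phi(\la v\ra^2)$ is not a priori integrable against $f_\e$, I first use the truncated weights $\Phi_R:=\Phi\,\chi_R$. Alternatively, I replace $\Phi'$ by the finite partial sums $\sum_{k\le N}\phi_k$, which grow only linearly at infinity; then $\Phi_N(\la v\ra^2)\lesssim_N\la v\ra^2$ is admissible by \eqref{eq/ Q wt} and \eqref{eq/ tr wt}. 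The partial sums keep all the structural properties (convexity, $\Phi_N''\le C/z$ with the same $C$, and the bounds \eqref{eq/ init tight} and Corollary \ref{cor/ in Phi} with constant $1$). So I derive the estimate for $\Phi_N$ with constants independent of $N$ and $\e$, and conclude by monotone convergence as $N\to\infty$.

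\textbf{Key pointwise estimate.} At fixed $(t,x)$, with $\rho=\rho_{f_\e}$ and so on, I show
\[
\int_{\R^3}\Phi(\la v\ra^2)(M[f_\e]-f_\e)\,\dv\le C\rho\theta .
\]
Convexity gives $\Phi(\la v\ra^2)\ge\Phi(X)+\Phi'(X)(\la v\ra^2-X)$ with $X=1+|u|^2+3\theta$. The first-order term integrates to zero against $f_\e$, and also against $M$, since both have mass $\rho$ and $\int\la v\ra^2 f_\e=\rho X$. Hence $\int\Phi f_\e\ge\rho\Phi(X)$. For the Maxwellian I use the Taylor remainder, $\Phi(y)\le\Phi(X)+\Phi'(X)(y-X)+\frac12\sup_{[X,y]\cup[y,X]}\Phi''\,(y-X)^2$. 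Using $\Phi''(s)\le C/s$ and $s\ge1$, I bound $\Phi''\le C/\min(X,y)\le C$ on the segment. The crude bound gives $\frac C2\int(\la v\ra^2-X)^2M\,\dv=C\rho(4\theta|u|^2+6\theta^2)$, which is too large. To fix this I split into $y\ge X/2$, where $\Phi''\le 2C/X$, giving $\frac{C}{X}\rho(4\theta|u|^2+6\theta^2)\le C'\rho\theta$ because $|u|^2\le X$ and $\theta\le X$. The region $y<X/2$ has Gaussian-small $M$-mass relative to the scale of the fluctuation; there I bound $\Phi''\le C$ and the deviation by $X^2$, and need $\int_{\la v\ra^2<X/2}M\,\dv\lesssim\rho\theta/X^2$. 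This Chebyshev-type tail bound, uniform in the ratio of $|u|^2$ to $\theta$, is the main technical obstacle. If it fails, I instead use the sharper fact that $\Phi$ is convex with $\Phi''\le \Phi'/z$ concave-type control, and estimate the remainder via $\int_X^y(y-s)\Phi''(s)\,ds\le C(y-X)^2/\max(X,y)\cdot\log$-type bounds. In short, I first try the Chebyshev route with fourth moments of the Gaussian.

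\textbf{Integration and boundary terms.} Multiplying by $\lambda_\e(\rho)\ge0$ and integrating, the collision contribution is bounded by $C\int_0^T\int_I\lambda_\e(\rho)\rho\theta\le C_T$ by Proposition \ref{prop/ rho2}. The Green formula then gives bulk $\Phi$-energy at time $t$, plus outgoing $\Phi$-fluxes, bounded by the initial value ($\le1$ by \eqref{eq/ init tight}) plus incoming fluxes plus $C_T$. In case \textbf{(In)} the incoming fluxes are $\le1$ by Corollary \ref{cor/ in Phi}.

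In case \textbf{(Max)}, $\Phi(\la v\ra^2)$ is invariant under $\scrV$. So, exactly as in Lemma \ref{lem/ j bounds}, the net flux at $x=-\frac12$ equals $-\alpha_l\int_{v_1<0}\Phi f_{\e l}|v_1|+\alpha_l c_\Phi J_{\e l}$, where $c_\Phi=\int_{v_1>0}\Phi(\la v\ra^2)\frkM v_1\,\dv<\infty$ by the Gaussian decay; the same holds at $x=\frac12$. The $J$ terms are bounded uniformly in $\e$ by \eqref{eq/ J est} and \eqref{eq/ bdd}. Collecting these gives \eqref{eq/ propagation}, first for almost every $t$ and then for the ess sup.
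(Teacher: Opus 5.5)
There is a genuine gap in the key pointwise estimate, on the region $\{\la v\ra^2<X/2\}$. The bound you say you need there, $\int_{\{\la v\ra^2<X/2\}}M[f_\e]\,\dv\lesssim\rho\theta/X^2$, is false. Take $u=0$ and $\theta\to\infty$, so $X=1+3\theta$. After the scaling $v=\sqrt\theta\,w$, the region $\{|v|^2<\frac{3\theta-1}{2}\}$ becomes $\{|w|^2<\frac32-\frac1{2\theta}\}$ for a standard Gaussian. It therefore carries a fixed fraction $c_0\rho$ of the mass, with $c_0>0$ independent of $\theta$, while $\rho\theta/X^2\sim\rho/(9\theta)\to0$. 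Your remainder bound $\frac C2X^2\int_{\{\la v\ra^2<X/2\}}M\,\dv$ is then of order $\rho\theta^2$, which Proposition \ref{prop/ rho2} does not control, since it only bounds $\lambda_\e(\rho)\rho\theta$.

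The loss comes from replacing $\Phi''(s)\le C/s$ by the constant bound $\Phi''\le C$ on the whole segment. Your fallback, with $\max(X,y)$ and unspecified logarithmic factors, is not carried out and is not in the right form.

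The repair is to keep the $1/s$ decay in the integral remainder. For $y<X$,
\[
\int_y^X(s-y)\Phi''(s)\,\ds\le C\int_y^X\frac{s-y}{s}\,\ds\le CX.
\]
On $\{y<X/2\}$, Chebyshev with the Gaussian variance gives
\[
\int_{\{y<X/2\}}M\,\dv\le\frac4{X^2}\rho\bigl(4\theta|u|^2+6\theta^2\bigr)\le\frac{16\rho\theta}{X}.
\]
So this region contributes at most $16C\rho\theta$.

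The paper's argument is cleaner and needs no splitting. Since $\Phi''\le C/z=Ch''$ with $h(z)=z\log z$, the function $\Phi-Ch$ is concave. Hence the Bregman divergence of $\Phi$ is at most $C$ times that of $h$, which equals $X(r\log r-r+1)$ with $r=y/X$. Because $r\log r-r+1\le(r-1)^2$, this gives $\Phi(y)-\Phi(X)-\Phi'(X)(y-X)\le C(y-X)^2/X$ for every $y>0$. Integrating against $M[f_\e]$ then yields $C\rho(4\theta|u|^2+6\theta^2)/X\le C'\rho\theta$. Your treatment of $\{y\ge X/2\}$ is correct and is a special case of this.

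On the truncation, drop the option $\Phi\chi_R$: it is not convex, so neither Jensen nor the curvature bound applies to it. Your partial-sum truncation $\Phi_N'=\sum_{k\le N}\phi_k$ is a valid substitute for the paper's linear extension $\Phi_R$. It satisfies $0\le\Phi_N''\le\Phi''$, $\Phi_N\le\Phi$, and $\Phi_N(z)\le Nz$, so all constants are uniform in $N$. Your treatment of the boundary terms in both cases (In) and (Max) agrees with the paper.
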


\begin{proof}
We first perform the computation formally with $\Phi(\la v\ra^2)$ as a test function. The justification by truncation is given at the end of the proof. We obtain
\bq\label{eq/ direct}
\begin{aligned}
\ddt \iint_{\calO} \Phi(\la v\ra^2)f_\e\,\dx\dv &= \iint_{\calO} \lambda_\e(\rho_{f_\e}) \Phi(\la v\ra^2) (M[f_\e]-f_\e ) \,\dx\dv\\
&\quad- \int_{\R^3} \Phi(\la v\ra^2)f_{\e r}v_1\,\dv + \int_{\R^3} \Phi(\la v\ra^2)f_{\e l}v_1\,\dv.
\end{aligned}
\eq

\medskip \noindent $\bullet$ (The bulk collision term) It suffices to consider $\rho_{f_\e}>0$, since the estimate is immediate on the vacuum set. Set
\[
X_\e := \int_{\R^3} \la v\ra^2 \frac{M[f_\e]}{\rho_{f_\e}}\,\dv = 1+|u_{f_\e}|^2+3\theta_{f_\e}.
\]
By Jensen's inequality, applied to the probability measure $\frac{f_\e}{\rho_{f_\e}}\,\dv$,
\[
\frac1{\rho_{f_\e}} \int_{\R^3} \Phi(\la v\ra^2)f_\e\,\dv \ge \Phi\lt( \frac1{\rho_{f_\e}} \int_{\R^3}\la v\ra^2f_\e\,\dv \rt) = \Phi(X_\e),
\]
where the last identity follows from the definitions of $u_{f_\e}$ and $\theta_{f_\e}$. Hence
\bq\label{eq/ inter}
\int_{\R^3} \Phi(\la v\ra^2) (M[f_\e]-f_\e )\,\dv \le \rho_{f_\e} \lt[ \int_{\R^3} \Phi(\la v\ra^2) \frac{M[f_\e]}{\rho_{f_\e}}\,\dv - \Phi(X_\e) \rt]. 
\eq

Let $h(z):=z\log z$. By \eqref{eq/ Phi pp}, after enlarging the constant $C$ if necessary,
\[
\Phi''(z)\le\frac{C}{z}=Ch''(z),
\]
and hence $\Phi-Ch$ is concave. Thus, for $y,X>0$,
\[
\Phi(y)-\Phi(X)-\Phi'(X)(y-X) \le C\lt( h(y)-h(X)-h'(X)(y-X) \rt).
\]
Writing $r=\frac yX$, we compute
\[
h(y)-h(X)-h'(X)(y-X) = X(r\log r-r+1 ).
\]
Since $\log r\le r-1$,
\[
r\log r-r+1 \le r(r-1)-r+1 = (r-1)^2,
\]
and hence
\bq\label{eq/ log gap}
h(y)-h(X)-h'(X)(y-X) \le \frac{(y-X)^2}{X}.
\eq

Applying \eqref{eq/ log gap} with $y=\la v\ra^2$ and $X=X_\e$, and using
\[
\int_{\R^3} (\la v\ra^2-X_\e) \frac{M[f_\e]}{\rho_{f_\e}}\,\dv =0,
\]
we obtain
\[
\int_{\R^3} \Phi(\la v\ra^2) \frac{M[f_\e]}{\rho_{f_\e}}\,\dv -\Phi(X_\e) \le \frac{C}{X_\e} \int_{\R^3} (\la v\ra^2-X_\e)^2 \frac{M[f_\e]}{\rho_{f_\e}}\,\dv.
\]

To compute the last integral, we use the change of variables $w=v-u_{f_\e}$. Since $X_\e = 1+|u_{f_\e}|^2+3\theta_{f_\e}$, we have
\[
\la v\ra^2-X_\e = 1+|u_{f_\e}+w|^2 -(1+|u_{f_\e}|^2+3\theta_{f_\e} ) = 2u_{f_\e}\cdot w +|w|^2-3\theta_{f_\e}.
\]
Expanding the square, we obtain
\begin{align*}
\int_{\R^3} (\la v\ra^2-X_\e)^2M[f_\e]\,\dv &= 4\int_{\R^3} (u_{f_\e}\cdot w)^2M[f_\e]\,\dv + \int_{\R^3} (|w|^2-3\theta_{f_\e} )^2 M[f_\e]\,\dv\\
&\quad+ 4\int_{\R^3} (u_{f_\e}\cdot w) (|w|^2-3\theta_{f_\e} ) M[f_\e]\,\dv.
\end{align*}
The last integral vanishes by oddness. For the first term, the standard Gaussian identities
\[
\int_{\R^3}w_iw_jM[f_\e]\,\dv = \rho_{f_\e}\theta_{f_\e}\delta_{ij}
\]
give
\[
\int_{\R^3} (u_{f_\e}\cdot w)^2M[f_\e]\,\dv = \rho_{f_\e}\theta_{f_\e}|u_{f_\e}|^2.
\]
For the second term, we use
\[
\int_{\R^3}|w|^2M[f_\e]\,\dv = 3\rho_{f_\e}\theta_{f_\e} \quad \text{and} \quad \int_{\R^3}|w|^4M[f_\e]\,\dv = 15\rho_{f_\e}\theta_{f_\e}^2.
\]
Indeed,
\[
\int_{\R^3}w_i^4M[f_\e]\,\dv = 3\rho_{f_\e}\theta_{f_\e}^2, \quad \int_{\R^3}w_i^2w_j^2M[f_\e]\,\dv = \rho_{f_\e}\theta_{f_\e}^2 \quad(i\ne j),
\]
and hence
\[
\int_{\R^3}|w|^4M[f_\e]\,\dv = \sum_{i=1}^3 \int_{\R^3}w_i^4M[f_\e]\,\dv + 2\sum_{1\le i<j\le3} \int_{\R^3}w_i^2w_j^2M[f_\e]\,\dv = 15\rho_{f_\e}\theta_{f_\e}^2.
\]
Consequently,
\[
\int_{\R^3} (|w|^2-3\theta_{f_\e} )^2 M[f_\e]\,\dv = \int_{\R^3}|w|^4M[f_\e]\,\dv -6\theta_{f_\e} \int_{\R^3}|w|^2M[f_\e]\,\dv +9\theta_{f_\e}^2 \int_{\R^3}M[f_\e]\,\dv = 6\rho_{f_\e}\theta_{f_\e}^2.
\]
We conclude that
\[
\int_{\R^3} (\la v\ra^2-X_\e)^2M[f_\e]\,\dv = \rho_{f_\e} ( 4\theta_{f_\e}|u_{f_\e}|^2 +6\theta_{f_\e}^2  ).
\]

It follows that
\bq\label{eq/ gaussian gap}
\int_{\R^3} \Phi(\la v\ra^2) \frac{M[f_\e]}{\rho_{f_\e}}\,\dv -\Phi(X_\e) \le C \frac{ 4\theta_{f_\e}|u_{f_\e}|^2 +6\theta_{f_\e}^2 }{ 1+|u_{f_\e}|^2+3\theta_{f_\e} } \le C\theta_{f_\e}. 
\eq
Combining \eqref{eq/ inter} and \eqref{eq/ gaussian gap} yields
\bq\label{eq/ Phi Q}
\int_{\R^3} \Phi(\la v\ra^2) (M[f_\e]-f_\e )\,\dv \le C\rho_{f_\e}\theta_{f_\e}.
\eq
From Proposition \ref{prop/ rho2},
\bq\label{eq/ therm reg}
\int_0^T\int_I \lambda_\e(\rho_{f_\e}) \rho_{f_\e}\theta_{f_\e} \,\dx\dt \le C_T,
\eq
with a constant independent of $\e$; thus \eqref{eq/ Phi Q}--\eqref{eq/ therm reg} show that the time-integrated bulk collision term in \eqref{eq/ direct} satisfies
\begin{equation}\label{eq/ therm cont}
\int_0^T\iint_{\calO} \lambda_\e(\rho_{f_\e})\Phi(\la v\ra^2) (M[f_\e]-f_\e)\,\dx\dv\dt \le C\int_0^T\int_I \lambda_\e(\rho_{f_\e})\rho_{f_\e}\theta_{f_\e}\,\dx\dt \le C_T.
\end{equation}

Integrating \eqref{eq/ direct} in time and using \eqref{eq/ init tight}, \eqref{eq/ therm cont}, we obtain
\bq\label{eq/ Phi est}
\iint_{\calO} \Phi(\la v\ra^2)f_\e(t)\,\dx\dv \le 1+C_T - \int_0^t\int_{\R^3} \Phi(\la v\ra^2)f_{\e r}v_1\,\dv\ds + \int_0^t\int_{\R^3} \Phi(\la v\ra^2)f_{\e l}v_1\,\dv\ds.
\eq

\medskip \noindent $\bullet$ (Boundary terms) For \textbf{(In)}, the boundary conditions give
\[
-\int_{\R^3} \Phi(\la v\ra^2)f_{\e r}v_1\,\dv = \int_{v_1<0} \Phi(\la v\ra^2)g_r|v_1|\,\dv - \int_{v_1>0} \Phi(\la v\ra^2)f_{\e r}v_1\,\dv,
\]
and an analogous identity holds at $x=-\frac12$. Hence Corollary \ref{cor/ in Phi} and \eqref{eq/ Phi est} give
\[
\iint_{\calO} \Phi(\la v\ra^2)f_\e(t)\,\dx\dv +\int_0^t\int_{v_1>0} \Phi(\la v\ra^2)f_{\e r}v_1\,\dv\ds + \int_0^t\int_{v_1<0} \Phi(\la v\ra^2)f_{\e l}|v_1|\,\dv\ds \le C_T.
\]

For \textbf{(Max)}, set
\[
c_{\frkM,\Phi} := \int_{v_1<0} \Phi(\la v\ra^2)\frkM(v)|v_1|\,\dv.
\]
The curvature estimate \eqref{eq/ Phi pp} implies $\Phi(z)\le C(1+z\log(1+z))$, and hence $c_{\frkM,\Phi}<\infty$ by the Gaussian decay of $\frkM$.

Consider the boundary $x=\frac12$. Since $\Phi(\la\scrV v\ra^2)=\Phi(\la v\ra^2)$, the Maxwell boundary condition gives
\begin{align*}
-\int_{\R^3} \Phi(\la v\ra^2)v_1f_{\e r}(v)\,\dv &= -\int_{v_1>0} \Phi(\la v\ra^2)v_1f_{\e r}(v)\,\dv\\
&\quad -\int_{v_1<0} \Phi(\la v\ra^2)v_1 \Big( (1-\alpha_r)f_{\e r}(\scrV v) +\alpha_r\frkM(v)J_{\e r} \Big)\,\dv.
\end{align*}
Using the change of variables $v\mapsto\scrV v$ in the reflected part, we have
\[
\int_{v_1<0} \Phi(\la v\ra^2)v_1f_{\e r}(\scrV v)\,\dv = -\int_{v_1>0} \Phi(\la v\ra^2)v_1f_{\e r}(v)\,\dv.
\]
Moreover,
\[
\int_{v_1<0} \Phi(\la v\ra^2)v_1\frkM(v)\,\dv = -c_{\frkM,\Phi}.
\]
Hence
\bq\label{eq/ Phi r}
\begin{aligned}
-\int_{\R^3} \Phi(\la v\ra^2)v_1f_{\e r}\,\dv &= -\alpha_r \int_{v_1>0} \Phi(\la v\ra^2)v_1f_{\e r}\,\dv +\alpha_rc_{\frkM,\Phi}J_{\e r} \\
&= -\alpha_r \lt[ \int_{v_1>0} \Phi(\la v\ra^2)v_1f_{\e r}\,\dv - c_{\frkM,\Phi}J_{\e r} \rt]. 
\end{aligned}
\eq

The computation at $x=-\frac12$ is analogous and gives
\bq\label{eq/ Phi l}
\int_{\R^3} \Phi(\la v\ra^2)v_1f_{\e l}\,\dv = -\alpha_l \lt[ \int_{v_1<0} \Phi(\la v\ra^2)|v_1|f_{\e l}\,\dv - c_{\frkM,\Phi}J_{\e l} \rt].
\eq
Substituting \eqref{eq/ Phi r} and \eqref{eq/ Phi l} into \eqref{eq/ Phi est}, we obtain
\begin{align*}
&\iint_{\calO} \Phi(\la v\ra^2)f_\e(t)\,\dx\dv +\alpha_l \int_0^t\int_{v_1<0} \Phi(\la v\ra^2)f_{\e l}|v_1|\,\dv\ds + \alpha_r \int_0^t\int_{v_1>0} \Phi(\la v\ra^2)f_{\e r}v_1\,\dv\ds \\
&\quad \le 1+C_T +c_{\frkM,\Phi} \int_0^t ( \alpha_lJ_{\e l} +\alpha_rJ_{\e r}  )\,\ds.
\end{align*}

Lemma \ref{lem/ j bounds}, together with \eqref{eq/ bdd}, controls the last term uniformly in $\e$. This proves the formal estimate \eqref{eq/ propagation}.

\medskip \noindent $\bullet$ (Justification by truncation) For $R>1$, define
\[
\Phi_R(z) :=
\begin{cases}
\Phi(z), & z\le R,\\
\Phi(R)+\Phi'(R)(z-R), & z>R.
\end{cases}
\]
Then $\Phi_R$ is convex and $C^1$, its derivative is locally Lipschitz, and
\[
0\le\Phi_R\uparrow\Phi, \quad 0\le\Phi_R''(z)\le\frac{C}{z} \quad\text{for a.e. }z>0.
\]
Moreover,
\[
\Phi_R(\la v\ra^2) \le C_R\la v\ra^2.
\]
For fixed $R$, use $\Phi_R(\la v\ra^2)\chi_L(v)$ as a test function, with a smooth approximation if necessary. Letting $L\to\infty$ is justified by \eqref{eq/ Q wt} for each fixed $\e$ and by the trace estimate \eqref{eq/ tr wt}. We first pass to the limit $L\to\infty$ with $R$ fixed. The above bulk and boundary estimates then apply to $\Phi_R$, with constants independent of $R$, since $\Phi_R\le\Phi$ and the curvature bound is uniform in $R$. Finally, letting $R\to\infty$ and using monotone convergence in the bulk and in the nonnegative outgoing boundary terms gives \eqref{eq/ propagation}. This completes the proof.
\end{proof}

The above estimate yields the uniform integrability of the kinetic energy required for the compactness argument.

\begin{corollary}
Let $(f_\e)$ be the approximate solutions of \eqref{eq/ regbgk} satisfying the assumptions of Proposition \ref{prop/ super}. For every $T>0$, we have
\bq\label{eq/ E tail}
\lim_{R\to\infty} \sup_{\e>0} \operatorname*{ess\,sup}_{0<t<T} \iint_{I\times\{|v|>R\}} \la v\ra^2f_\e(t,x,v)\,\dx\dv =0.
\eq
The corresponding outgoing boundary tails satisfy
\bq\label{eq/ bdy tail}
\lim_{R\to\infty} \sup_{\e>0} \alpha_l \int_0^T\int_{\substack{v_1<0\\
|v|>R}} \la v\ra^2f_{\e l}|v_1|\,\dv\dt =0,\quad \lim_{R\to\infty} \sup_{\e>0} \alpha_r \int_0^T\int_{\substack{v_1>0\\
|v|>R}} \la v\ra^2f_{\e r}v_1\,\dv\dt =0,
\eq
with $\alpha_l=\alpha_r=1$ in case \textbf{(In)}.
\end{corollary}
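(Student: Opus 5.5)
The plan is to deduce both tail statements directly from Proposition \ref{prop/ super}, using the superlinearity \eqref{eq/ Phi sup} of $\Phi$. The argument is the standard de la Vall\'ee Poussin converse. On the region where $\la v\ra^2$ is large, the ratio $\la v\ra^2/\Phi(\la v\ra^2)$ is small, so the weighted energy bound \eqref{eq/ propagation} converts into smallness of the energy tail. This holds uniformly in $\e$ and in $t$, and identically for the bulk and boundary terms.

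Concretely, for $R>0$ set
\[
\omega(R):=\sup_{z\ge 1+R^2}\frac{z}{\Phi(z)} .
\]
By \eqref{eq/ Phi sup}, $\omega(R)\to0$ as $R\to\infty$. For $R$ large enough we have $\Phi(z)>0$ for $z\ge1+R^2$ (indeed $\Phi(z)\ge \tfrac12 z\Phi'(z)>0$ for $z>0$), so $\omega(R)$ is finite. On $\{|v|>R\}$ we have $\la v\ra^2\ge 1+R^2$, and therefore $\la v\ra^2\le \omega(R)\Phi(\la v\ra^2)$ there.

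For a.e. $t\in(0,T)$ and every $\e$, this gives
\[
\iint_{I\times\{|v|>R\}}\la v\ra^2 f_\e(t)\,\dx\dv\le \omega(R)\iint_{\calO}\Phi(\la v\ra^2)f_\e(t)\,\dx\dv\le \omega(R)\,C_T ,
\]
by the first term of \eqref{eq/ propagation}. Taking the essential supremum in $t$ and the supremum over $\e$, and then letting $R\to\infty$, yields \eqref{eq/ E tail}.

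The boundary statements \eqref{eq/ bdy tail} follow in the same way. Multiply the same pointwise inequality by $f_{\e l}|v_1|$ on $\{v_1<0,|v|>R\}$, and by $f_{\e r}v_1$ on $\{v_1>0,|v|>R\}$. Integrate over $(0,T)$ and apply the second and third terms of \eqref{eq/ propagation}. Each tail is then bounded by $\omega(R)C_T$ uniformly in $\e$, and we conclude as before. In case \textbf{(In)} we take $\alpha_l=\alpha_r=1$, with $\Phi$ the common function from Corollary \ref{cor/ in Phi}, so Proposition \ref{prop/ super} applies verbatim.

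There is no genuine obstacle at this stage, since all the analytic work sits in Proposition \ref{prop/ super}. The only points to watch are bookkeeping ones. First, the essential supremum in time must be taken after the pointwise-in-$t$ bound, which is legitimate because the constant $\omega(R)C_T$ does not depend on $t$. Second, the weight $\Phi$ must be the same one for the bulk and for both boundary traces, which is guaranteed by the choice made in Corollary \ref{cor/ in Phi}.
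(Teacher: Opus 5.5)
Your proposal is correct and follows the same route as the paper: a pointwise bound $\la v\ra^2 \le (\text{small factor})\,\Phi(\la v\ra^2)$ on $\{|v|>R\}$, combined with the uniform bulk and outgoing-flux bounds of Proposition \ref{prop/ super}. The only cosmetic difference is that the paper uses the monotonicity of $z\mapsto \Phi(z)/z$ (from convexity and $\Phi(0)=0$) to write the factor explicitly as $\frac{\la R\ra^2}{\Phi(\la R\ra^2)}$. Your $\omega(R)=\sup_{z\ge 1+R^2} z/\Phi(z)$ coincides with this factor and needs only superlinearity.
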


\begin{proof}
Since $\Phi$ is convex and $\Phi(0)=0$, the function $z\mapsto\frac{\Phi(z)}z$ is nondecreasing. Hence, on $|v|>R$,
\[
\la v\ra^2 \le \frac{\la R\ra^2}{\Phi(\la R\ra^2)} \Phi(\la v\ra^2).
\]
The prefactor $\frac{\la R\ra^2}{\Phi(\la R\ra^2)}$ tends to zero by \eqref{eq/ Phi sup}. Applying Proposition \ref{prop/ super} gives \eqref{eq/ E tail} and \eqref{eq/ bdy tail}. This completes the proof.
\end{proof}

%
%
%
%
%
%
%
%
%
%
%

\section{Construction of a renormalized solution in the inflow case}\label{sec/ inflow limit}

We first treat the inflow boundary condition. The compactness arguments for the interior terms are common to \textbf{(In)} and \textbf{(Max)}, whereas the passage to the limit in the boundary terms requires different trace estimates. We thus carry out the interior limit and the inflow boundary argument in this section. The modifications required for \textbf{(Max)} will be given in the next section.

By \eqref{eq/ bdd}, the family $(f_\e)$ is uniformly integrable on bounded velocity sets. Moreover, the second-moment bound gives
\[
\sup_{\e>0} \int_0^T\iint_{I\times\{|v|>R\}} f_\e\,\dx\dv\dt \le \frac{C_T}{\la R\ra^2} \xrightarrow[R\to\infty]{}0.
\]
Hence the Dunford--Pettis theorem yields, after passing to a subsequence,
\bq\label{eq/ f weak}
f_\e\weakto f \quad\text{weakly in }L^1((0,T)\times\calO).
\eq
By a diagonal extraction in $T$, we may assume that the same convergence holds weakly in $L^1_{\rm loc}((0,\infty);L^1(\calO))$. In the remainder of the proof, all further extractions are understood to be taken from this subsequence.

For $\delta>0$, define
\[
\beta_\delta(z):=\frac{z}{1+\delta z}.
\]
For every fixed $\e,\delta>0$, the collision frequency $\lambda_\e$ is bounded. The classical renormalization and trace theory for transport equations with integrable source terms \cite{DL89O,Mis00,Mis10} gives
\bq\label{eq/ betadelta}
\p_t\beta_\delta(f_\e) +v_1\p_x\beta_\delta(f_\e) = \lambda_\e(\rho_{f_\e}) \beta_\delta'(f_\e) (M[f_\e]-f_\e )
\eq
in the renormalized sense, with spatial traces
\[
\beta_\delta(f_\e)_l=\beta_\delta(f_{\e l}), \quad \beta_\delta(f_\e)_r=\beta_\delta(f_{\e r}),
\]
and temporal traces
\[
\begin{aligned}
\beta_\delta(f_\e)|_{t=0} =\beta_\delta(f_{\e0}), \quad \beta_\delta(f_\e)|_{t=s}=\beta_\delta(f_\e(s)) \quad (s>0).
\end{aligned}
\]
In particular, for every $t>0$ and $\varphi\in\calD([0,t]\times\ov{\calO})$, the inflow boundary condition gives
\bq\label{eq/ reg green}
\begin{aligned}
&\iint_{\calO} \beta_\delta(f_\e(t))\varphi(t)\,\dx\dv - \iint_{\calO} \beta_\delta(f_{\e0})\varphi(0)\,\dx\dv\\
&\quad+ \int_0^t\int_{v_1>0} \beta_\delta(f_{\e r}) \varphi(s,\frac12,v)v_1\,\dv\ds + \int_0^t\int_{v_1<0} \beta_\delta(g_r) \varphi(s,\frac12,v)v_1\,\dv\ds\\
&\quad- \int_0^t\int_{v_1<0} \beta_\delta(f_{\e l}) \varphi(s,-\frac12,v)v_1\,\dv\ds - \int_0^t\int_{v_1>0} \beta_\delta(g_l) \varphi(s,-\frac12,v)v_1\,\dv\ds\\
&\quad \quad = \int_0^t\iint_{\calO} \beta_\delta(f_\e) (\p_s+v_1\p_x)\varphi\,\dx\dv\ds + \int_0^t\iint_{\calO} Q_{\e,\delta}\varphi\,\dx\dv\ds,
\end{aligned}
\eq
where
\bq\label{eq/ Q reg}
Q_{\e,\delta} := \beta_\delta'(f_\e) \lambda_\e(\rho_{f_\e}) (M[f_\e]-f_\e ).
\eq

We first pass to the limit in the interior terms, then remove the renormalization parameter $\delta$, and finally return to the boundary terms.

%
%
%
%
%
%
%
%
%
%
%

\subsection{Interior compactness and the limit $\e\to0$}\label{subsec/ int lim}

For fixed $\delta>0$,
\[
0\le\beta_\delta(f_\e) \le\min\lt\{f_\e,\frac1\delta\rt\},
\]
thus $(\beta_\delta(f_\e))$ is bounded in $L^1\cap L^\infty((0,T)\times\calO)$.

We first obtain a local $L^1$ bound for the source term in \eqref{eq/ betadelta}. Let $R>0$. Splitting the domain according to whether $M[f_\e]\ge3f_\e$, we have
\begin{align*}
&\iiint_{(0,T)\times I\times B_R} \lambda_\e(\rho_{f_\e}) \beta_\delta'(f_\e) |M[f_\e]-f_\e| \,\dt\dx\dv \\
&\quad \le \frac{1}{\log3} \iiint_{\{(t,x,v)\in(0,T)\times\calO:M[f_\e]\ge3f_\e\}} \lambda_\e(\rho_{f_\e}) (M[f_\e]-f_\e) \log\frac{M[f_\e]}{f_\e} \,\dt\dx\dv \\
&\quad \quad+ 3 \iiint_{(0,T)\times I\times B_R} \lambda_\e(\rho_{f_\e}) \frac{f_\e}{(1+\delta f_\e)^2} \,\dt\dx\dv.
\end{align*}
For the second term, since $\frac{f_\e}{(1+\delta f_\e)^2} \le\frac1\delta$ and $\lambda_\e(\rho_{f_\e})\le\rho_{f_\e}$,
\begin{align*}
\iiint_{(0,T)\times I\times B_R} \lambda_\e(\rho_{f_\e}) \frac{f_\e}{(1+\delta f_\e)^2} \,\dt\dx\dv \le \frac{|B_R|}{\delta} \int_0^T\int_I\rho_{f_\e}\,\dx\dt \le C_{T,R,\delta}.
\end{align*}
Together with \eqref{eq/ reg diss}, this gives
\[
\sup_{\e>0} \iiint_{(0,T)\times I\times B_R} |Q_{\e,\delta}|\,\dt\dx\dv \le C_{T,R,\delta}.
\]

For fixed $\delta>0$, the velocity averaging theorem applied after localization in $(t,x)$ gives relative compactness of
\bq\label{eq/ beta avg}
\lt( \int_{\R^3}\beta_\delta(f_\e)\varphi(v)\,\dv \rt)_\e
\eq
in $L^1_{\rm loc}((0,T)\times I)$ for every $\varphi\in\calD(\R^3)$; see \cite[Section III, Theorem 6]{DLM91}. In the present slab geometry, one may first integrate against $\varphi$ in the transverse velocity variables and then apply the one-dimensional transport result.

To extend the compactness to the whole cylinder, we use
\[
\lt| \int_{\R^3}\beta_\delta(f_\e)\varphi(v)\,\dv \rt| \le \frac{\|\varphi\|_{L^1(\R^3)}}{\delta}.
\]
For fixed $\delta$, the integrals over the time and spatial boundary layers tend to zero uniformly in $\e$ as the measures of these layers tend to zero. An increasing sequence of compact subsets covering $(0,T)\times I$ then gives relative compactness in $L^1((0,T)\times I)$.

We next transfer this compactness to $f_\e$. Since $0\le\beta_\delta(z)\le z$ for $z\ge0$, for every $L>1$ we have
\bq\label{eq/ beta err}
|\beta_\delta(f_\e)-f_\e| = f_\e-\beta_\delta(f_\e) = \frac{\delta f_\e^2}{1+\delta f_\e} \le \delta L f_\e + f_\e\one_{\{f_\e>L\}} \le \delta L f_\e + \frac1{\log L} f_\e|\log f_\e|. 
\eq
Integrating \eqref{eq/ beta err} and using \eqref{eq/ bdd}, we obtain
\[
\sup_{\e>0} \operatorname*{ess\,sup}_{0<t<T} \|\beta_\delta(f_\e(t))-f_\e(t)\|_{L^1(\calO)} \le C_T\lt( \delta L+\frac1{\log L} \rt).
\]
For fixed $L>1$, letting $\delta\to0$ and then $L\to\infty$ yields
\bq\label{eq/ beta err unif}
\lim_{\delta\to0} \sup_{\e>0} \operatorname*{ess\,sup}_{0<t<T} \|\beta_\delta(f_\e(t))-f_\e(t)\|_{L^1(\calO)} =0.
\eq
For fixed $\varphi\in\calD(\R^3)$, \eqref{eq/ beta avg} and \eqref{eq/ beta err unif} imply that
\[
\lt( \int_{\R^3}f_\e\varphi(v)\,\dv \rt)_\e
\]
is relatively compact in $L^1((0,T)\times I)$. Since its weak limit is uniquely determined by \eqref{eq/ f weak}, we conclude that
\bq\label{eq/ avg}
\int_{\R^3}f_\e\varphi(v)\,\dv \longrightarrow  \int_{\R^3}f\varphi(v)\,\dv \quad\text{strongly in }L^1((0,T)\times I)
\eq
for every $\varphi\in\calD(\R^3)$.

The velocity averages in \eqref{eq/ avg} involve compactly supported weights. The uniform integrability obtained in Section \ref{sec/ tails} allows us to recover the full kinetic energy.

\begin{lemma}\label{lem/ mom2}
Let $(f_\e)$ be the approximate solutions of \eqref{eq/ regbgk}, and let $f$ be their weak limit in \eqref{eq/ f weak}. For every $T>0$, 
\bq\label{eq/ mom2 conv}
\int_{\R^3}\la v\ra^2f_\e\,\dv \longrightarrow  \int_{\R^3}\la v\ra^2f\,\dv \quad\text{strongly in }L^1((0,T)\times I).
\eq
\end{lemma}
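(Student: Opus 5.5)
We split the weight $\la v\ra^2$ by a smooth velocity cutoff and combine the strong convergence of compactly supported velocity averages in \eqref{eq/ avg} with the uniform energy tail estimate \eqref{eq/ E tail}. Fix $R>1$ and a cutoff $\chi_R\in\calD(\R^3)$ with $\one_{\{|v|\le R\}}\le\chi_R\le\one_{\{|v|\le2R\}}$. We write
\[
\int_{\R^3}\la v\ra^2(f_\e-f)\,\dv
=\int_{\R^3}\la v\ra^2\chi_R(f_\e-f)\,\dv+\int_{\R^3}\la v\ra^2(1-\chi_R)f_\e\,\dv-\int_{\R^3}\la v\ra^2(1-\chi_R)f\,\dv .
\]
Since $\varphi=\la v\ra^2\chi_R\in\calD(\R^3)$, \eqref{eq/ avg} shows that the first term tends to zero in $L^1((0,T)\times I)$ as $\e\to0$ for each fixed $R$.

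For the second term, we use \eqref{eq/ E tail}. Integrating in time, we obtain
\[
\sup_{\e>0}\int_0^T\iint_{I\times\{|v|>R\}}\la v\ra^2f_\e\,\dx\dv\dt\le T\sup_{\e>0}\operatorname*{ess\,sup}_{0<t<T}\iint_{I\times\{|v|>R\}}\la v\ra^2f_\e\,\dx\dv=:T\eta(R),
\]
with $\eta(R)\to0$ as $R\to\infty$. Because $f_\e\ge0$, the $L^1((0,T)\times I)$ norm of the second term is bounded by this quantity.

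For the third term, we transfer the tail bound to the weak limit by weak lower semicontinuity. Take $\psi\ge0$ bounded, supported in $\{|v|>R\}$, and truncate the weight as $\min\{\la v\ra^2,N\}\psi(v)$. This function is bounded, so the weak $L^1$ convergence \eqref{eq/ f weak} gives
\[
\int_0^T\iint\min\{\la v\ra^2,N\}\psi f\le\liminf_{\e\to0}\int_0^T\iint\min\{\la v\ra^2,N\}\psi f_\e\le T\eta(R).
\]
Letting $N\to\infty$ by monotone convergence yields $\int_0^T\iint_{I\times\{|v|>R\}}\la v\ra^2f\le T\eta(R)$. In particular, $\la v\ra^2f\in L^1((0,T)\times\calO)$, so the limit moment is well defined, and the third term is also bounded by $T\eta(R)$.

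Combining the three pieces gives
\[
\limsup_{\e\to0}\Bigl\|\int_{\R^3}\la v\ra^2(f_\e-f)\,\dv\Bigr\|_{L^1((0,T)\times I)}\le 2T\eta(R)
\]
for every $R$. Letting $R\to\infty$ proves \eqref{eq/ mom2 conv}. The argument is routine once the two inputs are available. The genuine difficulty, namely the uniform tail estimate \eqref{eq/ E tail} independent of the frequency truncation, has already been settled in Section~\ref{sec/ tails}. The only delicate point here is controlling the tail of the limit $f$, which is handled by the lower semicontinuity argument above.
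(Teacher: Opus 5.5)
Your proof is correct and takes essentially the same approach as the paper: the same cutoff splitting, with the compactly supported part handled by \eqref{eq/ avg} and the velocity tails controlled uniformly in $\e$. The only difference is cosmetic. The paper passes the $\Phi(\la v\ra^2)$-moment bound of Proposition~\ref{prop/ super} to the limit $f$ and then uses the monotonicity of $\Phi(z)/z$, whereas you use the derived tail estimate \eqref{eq/ E tail} directly and pass that bound to $f$ by weak lower semicontinuity (with $\psi=\one_{\{|v|>R\}}$), which amounts to the same argument.
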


\begin{proof}
We first show that the limiting function inherits the de la Vall\'ee Poussin bound. For every $L>0$, $\Phi(\la v\ra^2)\one_{\{|v|\le L\}} \in L^\infty(\R^3)$, and thus the weak convergence \eqref{eq/ f weak} gives
\begin{align*}
\iiint_{(0,T)\times I\times\{|v|\le L\}} \Phi(\la v\ra^2)f\,\dt\dx\dv & = \lim_{\e\to0} \iiint_{(0,T)\times I\times\{|v|\le L\}} \Phi(\la v\ra^2)f_\e\,\dt\dx\dv\\
& \le \liminf_{\e\to0} \iiint_{(0,T)\times\calO} \Phi(\la v\ra^2)f_\e\,\dt\dx\dv \le C_T.
\end{align*}
Letting $L\to\infty$ by monotone convergence yields
\bq\label{eq/ f too}
\iiint_{(0,T)\times\calO} \Phi(\la v\ra^2)f\,\dt\dx\dv \le C_T.
\eq

Let $\chi_R\in C_c^\infty(\R^3)$ satisfy $\one_{\{|v|\le R\}}\le\chi_R\le\one_{\{|v|\le2R\}}$. We write
\bq\label{eq/ v2 split}
\int_{\R^3}\la v\ra^2(f_\e-f)\,\dv = \int_{\R^3} \chi_R(v)\la v\ra^2(f_\e-f)\,\dv + \int_{\R^3} (1-\chi_R(v)) \la v\ra^2(f_\e-f)\,\dv.
\eq
For fixed $R$, the first term converges strongly to zero in $L^1((0,T)\times I)$ by \eqref{eq/ avg}. For the second term, the monotonicity of $z\mapsto\frac{\Phi(z)}z$ gives
\[
\lt| \int_{\R^3} (1-\chi_R(v)) \la v\ra^2(f_\e-f)\,\dv \rt| \le \int_{|v|>R} \la v\ra^2(f_\e+f)\,\dv \le \frac{\la R\ra^2}{\Phi(\la R\ra^2)} \int_{\R^3} \Phi(\la v\ra^2)(f_\e+f)\,\dv.
\]
By Proposition \ref{prop/ super} and \eqref{eq/ f too}, the last integral is bounded uniformly in $\e$ in $L^1((0,T)\times I)$, while
\[
\frac{\la R\ra^2}{\Phi(\la R\ra^2)} \xrightarrow[R\to\infty]{}0.
\]
We first choose $R$ large and then let $\e\to0$ in \eqref{eq/ v2 split}. This proves \eqref{eq/ mom2 conv}. This completes the proof.
\end{proof}

We next identify the macroscopic limits.

\begin{lemma}\label{lem/ max L1}
Along a further subsequence, we have
\bq\label{eq/ ae}
\begin{aligned}
\rho_{f_\e} &\longrightarrow \rho_f &&\text{a.e. in }(0,T)\times I,\\
\lambda_\e(\rho_{f_\e}) &\longrightarrow \rho_f &&\text{a.e. in }(0,T)\times I,\\
u_{f_\e} &\longrightarrow  u_f &&\text{a.e. on }\{\rho_f>0\},\\
\theta_{f_\e} &\longrightarrow \theta_f &&\text{a.e. on }\{\rho_f>0\}.
\end{aligned}
\eq
Moreover,
\begin{align}
\lambda_\e(\rho_{f_\e}) &\longrightarrow \rho_f \quad \text{strongly in }L^1((0,T)\times I), \label{eq/ rate str}\\
M[f_\e] &\longrightarrow  M[f] \quad \text{strongly in } L^1((0,T)\times\calO, \la v\ra^2\,\dt\dx\dv). \label{eq/ M str}
\end{align}
\end{lemma}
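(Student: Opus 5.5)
The proof starts from the strong convergences already available: \eqref{eq/ avg} and Lemma \ref{lem/ mom2}. I first upgrade \eqref{eq/ avg} from compactly supported weights to the weights $1$, $v$, $\la v\ra^2$. The tail bound \eqref{eq/ E tail} and its analogue for $f$, which comes from \eqref{eq/ f too}, let me cut off at $|v|\le R$ with an error uniform in $\e$. This gives
\[
\rho_{f_\e}\to\rho_f,\quad j_{f_\e}=\rho_{f_\e}u_{f_\e}\to \rho_fu_f,\quad \int_{\R^3}\la v\ra^2f_\e\,\dv\to\int_{\R^3}\la v\ra^2 f\,\dv
\]
strongly in $L^1((0,T)\times I)$. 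Passing to a subsequence, all three converge almost everywhere. On $\{\rho_f>0\}$, dividing yields $u_{f_\e}\to u_f$ a.e. Writing
\[
3\rho\theta=\int_{\R^3}|v|^2f\,\dv-\frac{|j|^2}{\rho},
\]
we also get $\theta_{f_\e}\to\theta_f$ a.e. on $\{\rho_f>0\}$. For the rate, $|\lambda_\e(z)-z|=\e z^2/(1+\e z)\le \e z^2$ pointwise, so $\lambda_\e(\rho_{f_\e})\to\rho_f$ a.e. Since $0\le\lambda_\e(\rho_{f_\e})\le\rho_{f_\e}$ and $\rho_{f_\e}$ converges strongly in $L^1$, the family is uniformly integrable. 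Vitali's theorem (or generalized dominated convergence) then gives \eqref{eq/ rate str}.

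For \eqref{eq/ M str} I would argue as follows. On $\{\rho_f>0\}\cap\{\theta_f>0\}$, $M[f_\e]\to M[f]$ pointwise in $(t,x,v)$, by continuity of the Maxwellian in $(\rho,u,\theta)$ when $\theta>0$. If $\rho_f>0$ but $\theta_f=0$, there is a problem: since $f$ is an $L^1$ function with $\rho_f>0$, its velocity distribution cannot be a Dirac mass, so $\theta_f>0$ a.e. on $\{\rho_f>0\}$. I would record this (the variance vanishes only if $f(t,x,\cdot)$ is concentrated at one point). On $\{\rho_f=0\}$, I would not use pointwise convergence of $M[f_\e]$. Instead,
\[
\int_{\R^3}\la v\ra^2 M[f_\e]\,\dv=\int_{\R^3}\la v\ra^2 f_\e\,\dv\to\int_{\R^3}\la v\ra^2 f\,\dv=0
\]
holds in $L^1$ on that set.

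The key device is the Brezis--Lieb/Scheffé type argument applied to the nonnegative functions $\la v\ra^2M[f_\e]$ on $(0,T)\times\calO$. They converge a.e. to $\la v\ra^2M[f]$, at least on $\{\rho_f>0\}$. Their total integrals converge, because
\[
\iiint\la v\ra^2M[f_\e]=\iint\int_{\R^3}\la v\ra^2f_\e\,\dv\,\dx\dt,
\]
which converges by Lemma \ref{lem/ mom2} to the same integral for $M[f]$. Scheffé's lemma then gives strong $L^1(\la v\ra^2)$ convergence on $\{\rho_f>0\}$. On $\{\rho_f=0\}$, the integrated moment goes to $0$, so $M[f_\e]\to0=M[f]$ in $L^1(\la v\ra^2)$ there as well.

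The main obstacle I expect is the degenerate set $\{\rho_f>0,\theta_f=0\}$, together with the bookkeeping needed to run Scheffé separately on $\{\rho_f>0\}$ and $\{\rho_f=0\}$. I would handle it by showing that $\theta_f>0$ a.e. where $\rho_f>0$; this holds because $f\in L^1$ cannot concentrate in $v$. I would then split the integral over the two sets, using the strong moment convergence in $L^1((0,T)\times I)$ restricted to each set.
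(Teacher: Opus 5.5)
Your proposal is correct and follows essentially the same route as the paper. Both arguments obtain strong convergence of $\rho$, $j$ and the energy from \eqref{eq/ avg} plus tail control, identify $u$ and $\theta$ pointwise on $\{\rho_f>0\}$, use Vitali for \eqref{eq/ rate str}, use positivity of $\theta_f$ where $\rho_f>0$, and apply Scheff\'e's lemma with the weight $\la v\ra^2$ via $\int\la v\ra^2 M[f_\e]\,\dv=\int\la v\ra^2 f_\e\,\dv$. The differences are only cosmetic: you get $\theta_{f_\e}\to\theta_f$ directly pointwise rather than through the $L^1$ convergence of $|j_\e|^2/\rho_\e$, and you handle the vacuum set by the direct weighted bound $\iint_{\{\rho_f=0\}}\int\la v\ra^2M[f_\e]\,\dv\to0$ instead of extracting an a.e. subsequence there and applying Scheff\'e globally.
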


\begin{proof}
Set
\[
\rho_\e:=\rho_{f_\e}, \quad j_\e:=\int_{\R^3}vf_\e\,\dv, \quad e_\e:=\int_{\R^3}|v|^2f_\e\,\dv,
\]
and define $\rho$, $j$, and $e$ analogously from $f$.

The convergence \eqref{eq/ avg}, combined with the second-moment bounds, gives
\bq\label{eq/ rho j strong}
\rho_\e\to\rho, \quad j_\e\to j \quad\text{strongly in }L^1((0,T)\times I).
\eq
Indeed, after introducing a radial velocity cutoff, the tails are controlled by
\[
\int_{|v|>R}f_\e\,\dv \le \frac1{R^2} \int_{\R^3}|v|^2f_\e\,\dv, \quad \text{and} \quad \int_{|v|>R}|v|f_\e\,\dv \le \frac1R \int_{\R^3}|v|^2f_\e\,\dv,
\]
and the same estimates hold for $f$. Furthermore, \eqref{eq/ mom2 conv} and the first convergence in \eqref{eq/ rho j strong} yield
\[
e_\e \to  e \quad\text{strongly in }L^1((0,T)\times I).
\]

After passing to a further subsequence, all three convergences hold almost everywhere. On $\{\rho>0\}$,
\[
u_{f_\e} = \frac{j_\e}{\rho_\e} \longrightarrow  \frac{j}{\rho} = u_f.
\]

To treat the temperature, define
\[
q_\e :=
\begin{cases}
\displaystyle\frac{|j_\e|^2}{\rho_\e}, &\rho_\e>0,\\[6pt]
0,&\rho_\e=0.
\end{cases}
\]
By Cauchy--Schwarz,
\bq\label{eq/ q energy}
0\le q_\e\le e_\e.
\eq
On $\{\rho>0\}$,
\[
q_\e \longrightarrow  \frac{|j|^2}{\rho}.
\]
On $\{\rho=0\}$, we have $f=0$ for almost every velocity and hence $e=0$. Since $e_\e\to e$ almost everywhere after the above extraction, \eqref{eq/ q energy} gives $q_\e\to0$ there. Thus $q_\e$ converges almost everywhere to
\[
q:=
\begin{cases}
\displaystyle\frac{|j|^2}{\rho}, &\rho>0,\\[6pt]
0,&\rho=0.
\end{cases}
\]
Since $(e_\e)$ is strongly convergent in $L^1$, it is uniformly integrable. The bound \eqref{eq/ q energy} and Vitali's theorem then give
\[
q_\e \longrightarrow  q \quad\text{strongly in }L^1((0,T)\times I).
\]
Using $3\rho_\e\theta_{f_\e} = e_\e-q_\e$ and $3\rho\theta_f = e-q$, we obtain
\[
\rho_\e\theta_{f_\e} \longrightarrow  \rho\theta_f \quad\text{strongly in }L^1((0,T)\times I).
\]
In particular,
\[
\theta_{f_\e}\longrightarrow \theta_f \quad\text{a.e. on }\{\rho>0\}.
\]
This proves the macroscopic convergences in \eqref{eq/ ae}.

Since $\lambda_\e(\rho_\e) = \frac{\rho_\e}{1+\e\rho_\e}$, the almost everywhere convergence $\rho_\e\to\rho$ gives
\[
\lambda_\e(\rho_\e)\longrightarrow \rho \quad\text{a.e. in }(0,T)\times I.
\]
Moreover,
\[
0\le\lambda_\e(\rho_\e)\le\rho_\e.
\]
The strong $L^1$ convergence of $\rho_\e$ implies uniform integrability, and hence Vitali's theorem gives \eqref{eq/ rate str}.

It remains to prove \eqref{eq/ M str}. Since $f(t,x,\cdot)$ is an integrable function, $\rho>0$ implies $\theta_f>0$: otherwise $f$ would be supported at the single velocity $u_f$. On $\{\rho>0\}$, the almost everywhere convergence of $\rho_\e$, $u_{f_\e}$, and $\theta_{f_\e}$ yields
\bq\label{eq/ novac ae}
M[f_\e](t,x,v) \longrightarrow  M[f](t,x,v)
\eq
for almost every $(t,x,v)$ with $\rho(t,x)>0$.

Let
\[
\calV_0:=\{(t,x)\in(0,T)\times I:\rho(t,x)=0\}.
\]
Since $M[f]=0$ on $\calV_0\times\R^3$,
\[
\iiint_{\calV_0\times\R^3} |M[f_\e]-M[f]|\,\dt\dx\dv = \iiint_{\calV_0\times\R^3} M[f_\e]\,\dt\dx\dv = \iint_{\calV_0}\rho_\e\,\dt\dx \longrightarrow 0.
\]
After a further extraction,
\[
M[f_\e]\longrightarrow 0=M[f] \quad\text{a.e. on }\calV_0\times\R^3.
\]
Together with \eqref{eq/ novac ae}, this gives
\bq\label{eq/ max ae}
M[f_\e]\longrightarrow  M[f] \quad\text{a.e. in }(0,T)\times\calO.
\eq

Finally, the Maxwellian moment identities and Lemma \ref{lem/ mom2} give
\begin{align*}
\iiint_{(0,T)\times\calO} \la v\ra^2M[f_\e]\,\dt\dx\dv &= \iiint_{(0,T)\times\calO} \la v\ra^2f_\e\,\dt\dx\dv \\
&\longrightarrow  \iiint_{(0,T)\times\calO} \la v\ra^2f\,\dt\dx\dv = \iiint_{(0,T)\times\calO} \la v\ra^2M[f]\,\dt\dx\dv.
\end{align*}
Applying Scheff\'e's lemma with respect to the measure $\la v\ra^2\,\dt\dx\dv$ and using \eqref{eq/ max ae}, we obtain \eqref{eq/ M str}. This completes the proof.
\end{proof}

We now fix the subsequences used in the remaining argument. Let $(\delta_j)$ be a decreasing sequence such that $\delta_j\downarrow0$. By a diagonal extraction in $\e$, we may assume that, for every $\delta=\delta_j$, the following weak$\star$ limits exist along the same $\e$-subsequence:
\[
\beta_\delta(f_\e)  \weakstarto\ov{\beta}_\delta,\quad  \beta_\delta'(f_\e)  \weakstarto\frka_\delta,\quad  f_\e\beta_\delta'(f_\e)  \weakstarto\frkb_\delta \quad\text{in }   L^\infty((0,\infty)\times \calO) .
\]
For the first and third sequences the convergence also holds weakly in $L^1$ on every finite time interval whenever needed. The weak$\star$ compactness follows from
\[
0\le\beta_\delta(f_\e)\le\frac1\delta, \quad 0\le\beta_\delta'(f_\e)\le1, \quad 0\le f_\e\beta_\delta'(f_\e)\le\frac1\delta.
\]
In what follows, $\delta\to0$ always refers to the fixed sequence $(\delta_j)$.

We shall also use the entropy-production measures. By \eqref{eq/ reg diss}, after a further diagonal extraction,
\bq\label{eq/ mu}
D_\e[f_\e]\,\dt\dx\dv \weakstarto\nu \quad\text{in }\calM_{\rm loc} ([0,\infty)\times\ov{\calO}),
\eq
where
\[
D_\e[f_\e] = \lambda_\e(\rho_{f_\e}) (f_\e-M[f_\e]) (\log f_\e-\log M[f_\e]).
\]
The measure $\nu$ is nonnegative.

We next identify the limit of the collision term in \eqref{eq/ betadelta}.

\begin{lemma}\label{lem/ Q weak}
For every fixed $\delta=\delta_j$, the family $(Q_{\e,\delta})_\e$ defined in \eqref{eq/ Q reg} is weakly compact in $L^1_{\rm loc}([0,T]\times\ov{\calO})$.
\end{lemma}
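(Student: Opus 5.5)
We prove weak compactness through the Dunford--Pettis criterion: on each set $K=(0,T)\times I\times B_R$ we show that $(Q_{\e,\delta})_\e$ is uniformly integrable. Tightness in velocity is not needed, since the statement is local in $\ov{\calO}$. We split the source into its gain and loss parts,
\[
Q_{\e,\delta}^+:=\beta_\delta'(f_\e)\lambda_\e(\rho_{f_\e})M[f_\e],\qquad Q_{\e,\delta}^-:=\beta_\delta'(f_\e)\lambda_\e(\rho_{f_\e})f_\e,
\]
and handle each part separately. Since $0\le\beta_\delta'\le1$, it suffices to dominate each part by a uniformly integrable family, or by a family whose integral over the bad set is uniformly small.

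\emph{Loss term.} We have $0\le Q^-_{\e,\delta}\le \lambda_\e(\rho_{f_\e})\, f_\e\beta_\delta'(f_\e)\le \delta^{-1}\lambda_\e(\rho_{f_\e})$, and this bound does not depend on $v$. By \eqref{eq/ rate str}, $\lambda_\e(\rho_{f_\e})\to\rho_f$ strongly in $L^1((0,T)\times I)$, so the family is uniformly integrable in $(t,x)$. Multiplying by $\one_{B_R}(v)$ preserves uniform integrability on $K$. So $(Q^-_{\e,\delta})$ is uniformly integrable on $K$.

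\emph{Gain term.} Here we use the elementary splitting from the introduction. On $\{M[f_\e]\ge 3f_\e\}$ we have $\log(M/f)\ge\log3$ and $M\le\frac32(M-f)$, so
\[
Q^+_{\e,\delta}\le \tfrac{3}{2\log 3}\,D_\e[f_\e].
\]
On the complement, $Q^+_{\e,\delta}\le 3Q^-_{\e,\delta}$, which is already uniformly integrable. The difficulty is that $D_\e[f_\e]$ is only bounded in $L^1$ by \eqref{eq/ reg diss}; it is not uniformly integrable, since its limit $\nu$ may have a singular part. We therefore avoid $D_\e$ and dominate the gain directly, using the bounded factor $\beta_\delta'(f_\e)\le 1$ and the strong convergence \eqref{eq/ M str}. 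Indeed, $0\le Q^+_{\e,\delta}\le \lambda_\e(\rho_{f_\e})M[f_\e]\le \rho_{f_\e}M[f_\e]$, and it suffices to show that $(\lambda_\e(\rho_{f_\e})M[f_\e])_\e$ is uniformly integrable on $K$. By Lemma \ref{lem/ max L1}, $\lambda_\e(\rho_{f_\e})M[f_\e]\to\rho_fM[f]$ almost everywhere. If $\int_K\lambda_\e(\rho_{f_\e})M[f_\e]\to\int_K\rho_fM[f]<\infty$, Vitali's theorem gives strong convergence and hence uniform integrability.

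That convergence of integrals is the main obstacle. The integral $\int\rho M\,\dv=\rho^2$ need not converge, because we have no uniform bound on $\int\rho_{f_\e}^2$. What saves us is the restriction to $|v|\le R$. Since $\sup_v M[f]\lesssim\rho\theta^{-3/2}$, the integral $\int_{B_R}\rho M\,\dv$ can be large only where $\rho$ is large, or where $\theta$ is small with $|u|\lesssim R$. We control this through entropy rather than through $\rho^2$. On $\{M[f_\e]\ge 3 f_\e\}$ we keep the bound by $D_\e$, and we use the factor $\beta_\delta'(f_\e)=(1+\delta f_\e)^{-2}$ only on the complementary region. Concretely, I would first try the three-way split
\[
Q^+_{\e,\delta}\le 3Q^-_{\e,\delta}\one_{\{M<3f\}}+ L\,\lambda_\e(\rho_{f_\e})\one_{\{M\le L\}}\one_{B_R}+Q^+_{\e,\delta}\one_{\{M\ge 3f,\ M>L\}}.
\]
The first two pieces are uniformly integrable on $K$ for fixed $L$. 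On the third piece, $\log(M/f)\ge\log 3$ and $M>L$, so its integral is bounded by $\frac{C}{\log 3}\int D_\e$ restricted to the set where the gain is large. To make this piece small uniformly in $\e$, I would exploit that on $\{M\ge 3f\}$ with $f\ge \frac{M}{L'}$ the logarithm is at most $\log L'$, while for $f< M/L'$ the logarithm exceeds $\log L'$. This gives
\[
\lambda M\one_{\{f<M/L'\}}\le \frac{2}{\log L'}D_\e,
\]
which is uniformly small in $L^1$. The remaining part, where $f\ge M/L'$ and $M>L$, is bounded by $L'\lambda f\one_{\{f>L/L'\}}\le L'\delta^{-1}\lambda\one_{\{f>L/L'\}}$. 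This is small on $K$ for $L$ large, uniformly in $\e$, by the entropy bound \eqref{eq/ bdd} together with the uniform integrability of $\lambda_\e(\rho_{f_\e})$ in $(t,x)$. Choosing $L'$ first and then $L$ gives equi-integrability of the gain on $K$, and Dunford--Pettis concludes the proof.
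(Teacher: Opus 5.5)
Your final three-way splitting argument is correct and is essentially the paper's proof. The paper splits $Q_{\e,\delta}$ at $\{M[f_\e]>Rf_\e\}$ and bounds it there by $\frac{1}{\log R}D_\e[f_\e]$, which is uniformly small in $L^1$ by \eqref{eq/ reg diss}; so the possible lack of uniform integrability of $D_\e[f_\e]$ that you worried about is irrelevant, and there is no need to ``avoid'' $D_\e$. On the complement it bounds $Q_{\e,\delta}$ by $\frac{R}{\delta}\rho_{f_\e}$, which is uniformly integrable on bounded velocity sets.

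Your extra thresholds $M[f_\e]\ge 3f_\e$ and $M[f_\e]>L$ are superfluous. On $\{f_\e\ge M[f_\e]/L'\}$ the bound $L'\beta_\delta'(f_\e)\lambda_\e(\rho_{f_\e})f_\e\le L'\delta^{-1}\lambda_\e(\rho_{f_\e})$ is already uniformly integrable for fixed $L'$. In that step you must keep the factor $\beta_\delta'(f_\e)$, since $L'\lambda_\e(\rho_{f_\e})f_\e\le L'\delta^{-1}\lambda_\e(\rho_{f_\e})$ is false as you wrote it.
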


\begin{proof}
We use the entropy production splitting of \cite[Proposition 3.1]{GSR05}. Fix $L>0$ and set $K_L=[0,T]\times\ov I\times\ov B_L$. For $R>2$, write
\[
Q_{\e,\delta} = Q_{\e,\delta}\one_{\{M[f_\e]>Rf_\e\}} + Q_{\e,\delta}\one_{\{M[f_\e]\le Rf_\e\}}.
\]
On $\{M[f_\e]>Rf_\e\}$, we find
\[
\log\frac{M[f_\e]}{f_\e}\ge\log R,
\]
and hence
\[
\iiint_{K_L} |Q_{\e,\delta}| \one_{\{M[f_\e]>Rf_\e\}} \,\dt\dx\dv \le \frac1{\log R} \int_0^T\scrD_\e[f_\e]\,\dt \le \frac{C_T}{\log R}.
\]
On the complementary set, $|M[f_\e]-f_\e| \le(R-1)f_\e$ due to $R>2$. Hence
\[
|Q_{\e,\delta}| \one_{\{M[f_\e]\le Rf_\e\}} \le \frac{R}{\delta} \rho_{f_\e}.
\]
Consequently, for every compact $K\subset[0,T]\times\ov{\calO}$ and every measurable $A\subset K$,
\bq\label{eq/ Q UI}
\sup_{\e>0}\iiint_A|Q_{\e,\delta}|\,\dt\dx\dv \le \frac{C_T}{\log R} + \frac{R}{\delta} \sup_{\e>0}\iiint_A\rho_{f_\e}\,\dt\dx\dv.
\eq

The functions $\rho_{f_\e}$ are strongly convergent in $L^1((0,T)\times I)$ and hence uniformly integrable there. Their lifts to a fixed compact velocity set are also uniformly integrable. Indeed, for $K\subset(0,T)\times I\times B_L$ and $N>0$,
\[
\iiint_A\rho_{f_\e}\,\dt\dx\dv \le N|A| + |B_L| \iint_{\{(t,x)\in(0,T)\times I:\rho_{f_\e}>N\}}\rho_{f_\e}\,\dt\dx.
\]
We first choose $R$ large in \eqref{eq/ Q UI}, then $N$ large, and finally $|A|$ small. This proves the local uniform integrability of $(Q_{\e,\delta})_\e$, and the Dunford--Pettis theorem gives the claim. This completes the proof.
\end{proof}

We next establish separate compactness of the gain and loss terms.

\begin{lemma}\label{lem/ split weak}
Define
\[
Q_{\e,\delta}^+ := \beta_\delta'(f_\e) \lambda_\e(\rho_{f_\e})M[f_\e], \quad Q_{\e,\delta}^- := \beta_\delta'(f_\e) \lambda_\e(\rho_{f_\e})f_\e.
\]
For every fixed $\delta=\delta_j$, both $(Q_{\e,\delta}^+)_\e$ and $(Q_{\e,\delta}^-)_\e$ are weakly compact in $L^1_{\rm loc}([0,T]\times\ov{\calO})$.
\end{lemma}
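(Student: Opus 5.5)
The loss term is the simpler of the two, and I would handle it first. Since $0\le\beta_\delta'(z)\le1$ and $z\beta_\delta'(z)=\frac{z}{(1+\delta z)^2}\le\frac1\delta$, we have
\[
0\le Q_{\e,\delta}^-\le \frac1\delta\lambda_\e(\rho_{f_\e})\le \frac1\delta\rho_{f_\e}.
\]
The argument at the end of the proof of Lemma \ref{lem/ Q weak} shows that the lifts of $(\rho_{f_\e})$ to $[0,T]\times\ov I\times\ov B_L$ are uniformly integrable, because $\rho_{f_\e}$ converges strongly in $L^1((0,T)\times I)$. Alternatively, one can use the strong convergence \eqref{eq/ rate str} of $\lambda_\e(\rho_{f_\e})$ directly. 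Either way, $(Q_{\e,\delta}^-)_\e$ is dominated by a uniformly integrable family on every compact set $K_L$. The Dunford--Pettis theorem then gives weak compactness in $L^1_{\rm loc}$.

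For the gain term I would use the elementary splitting already used for $Q_{\e,\delta}$. The key observation is that $Q_{\e,\delta}^+=Q_{\e,\delta}+Q_{\e,\delta}^-$. By Lemma \ref{lem/ Q weak} the family $(Q_{\e,\delta})_\e$ is weakly compact in $L^1_{\rm loc}([0,T]\times\ov{\calO})$, and by the first step so is $(Q_{\e,\delta}^-)_\e$. A sum of two locally uniformly integrable families is locally uniformly integrable, so $(Q_{\e,\delta}^+)_\e$ is weakly compact as well.

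If a self-contained argument is preferred, I would repeat the splitting with $R>2$. On $\{M[f_\e]>Rf_\e\}$ we have $M[f_\e]-f_\e\ge (1-\frac1R)M[f_\e]$, so
\[
Q_{\e,\delta}^+\le 2\beta_\delta'(f_\e)\lambda_\e(\rho_{f_\e})(M[f_\e]-f_\e)\le \frac{2}{\log R}D_\e[f_\e].
\]
By \eqref{eq/ reg diss} the integral of this part is at most $\frac{2C_T}{\log R}$. On the complement, $Q_{\e,\delta}^+\le R\,Q_{\e,\delta}^-\le \frac R\delta\rho_{f_\e}$, and the same three-step choice of $R$, $N$, and $|A|$ as in \eqref{eq/ Q UI} concludes.

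I expect no serious obstacle here. The only point requiring care is the domination on the gain side. The Maxwellian $M[f_\e]$ is not pointwise bounded by $f_\e$, so on the set where it is large relative to $f_\e$ the entropy production must be used in place of the $\frac1\delta$ bound. The factor $\beta_\delta'\le1$ and $\lambda_\e\le\rho$ keep the remaining part controlled by the uniformly integrable densities, uniformly in $\e$.
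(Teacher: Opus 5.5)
Your proposal is correct and follows the paper's proof. The paper likewise bounds $0\le Q_{\e,\delta}^-\le\frac1\delta\lambda_\e(\rho_{f_\e})$, gets local uniform integrability from \eqref{eq/ rate str}, and then deduces the gain case from $0\le Q_{\e,\delta}^+\le|Q_{\e,\delta}|+Q_{\e,\delta}^-$ together with Lemma \ref{lem/ Q weak}. Your self-contained variant, which reruns the entropy-production splitting of Lemma \ref{lem/ Q weak} directly on the gain term, is also valid.
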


\begin{proof}
Since
\[
0\le f_\e\beta_\delta'(f_\e) = \frac{f_\e}{(1+\delta f_\e)^2} \le\frac1\delta,
\]
we obtain
\[
0\le Q_{\e,\delta}^- \le \frac1\delta \lambda_\e(\rho_{f_\e}).
\]
The strong convergence \eqref{eq/ rate str} implies local uniform integrability of the right-hand side after lifting it to compact velocity sets. Hence $(Q_{\e,\delta}^-)_\e$ is locally weakly compact.

Since $Q_{\e,\delta} = Q_{\e,\delta}^+-Q_{\e,\delta}^-$, we also have
\[
0\le Q_{\e,\delta}^+ \le |Q_{\e,\delta}|+Q_{\e,\delta}^-.
\]
Lemma \ref{lem/ Q weak} then yields the weak compactness of the gain term. This completes the proof.
\end{proof}

\begin{proposition}\label{prop/ qplus}
For every fixed $\delta=\delta_j$,
\bq\label{eq/ qplus weak}
Q_{\e,\delta}^+ \weakto \rho_f\frka_\delta M[f] \quad\text{weakly in } L^1_{\rm loc}([0,T]\times\ov{\calO}).
\eq

\end{proposition}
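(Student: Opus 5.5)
We want $Q_{\e,\delta}^+ = \beta_\delta'(f_\e)\,\lambda_\e(\rho_{f_\e})M[f_\e] \weakto \rho_f\frka_\delta M[f]$. The strategy is a weak--strong product argument. We write $Q_{\e,\delta}^+ = \beta_\delta'(f_\e)\, G_\e$ with $G_\e := \lambda_\e(\rho_{f_\e})M[f_\e]$. The first factor satisfies $0\le\beta_\delta'(f_\e)\le1$ and $\beta_\delta'(f_\e)\weakstarto\frka_\delta$ in $L^\infty$. If we show that $G_\e \to \rho_f M[f]$ strongly in $L^1_{\rm loc}([0,T]\times\ov{\calO})$, then for every $\varphi\in L^\infty$ with compact support we split
\[
\int \beta_\delta'(f_\e)G_\e\varphi - \int\frka_\delta\rho_fM[f]\varphi = \int\beta_\delta'(f_\e)(G_\e-\rho_fM[f])\varphi + \int(\beta_\delta'(f_\e)-\frka_\delta)\rho_fM[f]\varphi .
\]
The first term is bounded by $\|\varphi\|_\infty\|G_\e-\rho_fM[f]\|_{L^1(K)}\to0$. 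The second tends to zero by weak$\star$ convergence, since $\rho_fM[f]\varphi\in L^1$. Because Lemma \ref{lem/ split weak} already gives weak compactness of $(Q^+_{\e,\delta})_\e$, testing against $L^\infty$ functions with compact support suffices to identify the limit. Weak $L^1_{\rm loc}$ convergence then holds along the whole fixed subsequence, by uniqueness of the limit.

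The main obstacle is the strong convergence of $G_\e$, because $\rho_f M[f]$ is quadratic in density and neither factor is bounded. I plan to proceed as follows. By Lemma \ref{lem/ max L1}, $\lambda_\e(\rho_{f_\e})\to\rho_f$ and $M[f_\e]\to M[f]$ almost everywhere in $(0,T)\times\calO$, so $G_\e\to\rho_fM[f]$ almost everywhere. By Vitali's theorem it then remains to prove local uniform integrability of $(G_\e)$ on $K=[0,T]\times\ov I\times \ov B_L$. I will not use the crude bound $\lambda_\e(\rho)\rho\,\theta^{-3/2}$, which gives no control when $\theta$ is small. Instead I will reuse the entropy splitting from Lemma \ref{lem/ Q weak}, with a slightly different region. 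On $\{M[f_\e]\le Rf_\e\}$ we have $G_\e\le R\lambda_\e(\rho_{f_\e})f_\e$. On $\{M[f_\e]>Rf_\e\}$ we have $G_\e\le \frac{R}{R-1}\lambda_\e(\rho_{f_\e})(M[f_\e]-f_\e)$, and this is at most $\frac{2}{\log R}D_\e[f_\e]$, so its integral is $O(1/\log R)$ by \eqref{eq/ reg diss}.

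That leaves $\lambda_\e(\rho_{f_\e})f_\e\one_K$, whose uniform integrability is the real difficulty, since it is a product of two merely $L^1$-compact quantities. To handle it, I will truncate at a level $N$. On $\{\lambda_\e(\rho_{f_\e})\le N,\ f_\e\le N\}$ the integrand is at most $N^2$. On $\{f_\e>N\}$ we get $\lambda_\e(\rho_{f_\e})f_\e\le \rho_{f_\e}f_\e$, and I would like to use the entropy bound $f|\log f|$, but $\rho f$ is still problematic. If this route fails, the fallback is to avoid uniform integrability of $G_\e$ altogether and argue as follows. Since $\beta_\delta'(f_\e)\le 1$ and also $\beta_\delta'(f_\e)\le (\delta f_\e)^{-2}$, we can write $Q^+_{\e,\delta}=Q^+_{\e,\delta}\one_{\{f_\e\le N\}}+Q^+_{\e,\delta}\one_{\{f_\e>N\}}$. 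On $\{f_\e>N\}$ with $M[f_\e]\le Rf_\e$, we have $Q^+\le R\lambda_\e f_\e\beta'_\delta(f_\e)\le R\lambda_\e(\rho_{f_\e})/(\delta^2 N)$, which is uniformly small in $L^1(K)$ as $N\to\infty$ by \eqref{eq/ rate str}. On $\{f_\e\le N\}$ with $M[f_\e]\le Rf_\e$, we have $G_\e\le RN\lambda_\e(\rho_{f_\e})$, which is uniformly integrable by \eqref{eq/ rate str}. Thus $G_\e$, truncated at $f_\e\le N$, converges strongly by Vitali. The truncation indicator still requires care: I will replace it by a smooth cutoff $\chi_N(f_\e)$ and absorb $\chi_N(f_\e)$ into the weakly converging factor, i.e.\ consider $\beta_\delta'(f_\e)\chi_N(f_\e)\weakstarto \frka_{\delta,N}$. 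The function $\frka_{\delta,N}$ converges to $\frka_\delta$ in $L^\infty$ weak$\star$ sense as $N\to\infty$, with an error uniform in $\e$ controlled by $\|\one_{\{f_\e>N\}}\|_{L^1}\le C/N$. Letting $\e\to0$, then $R,N\to\infty$, yields \eqref{eq/ qplus weak}.
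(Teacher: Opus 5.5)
\paragraph{There is a genuine gap.}
Your primary route requires $G_\e=\lambda_\e(\rho_{f_\e})M[f_\e]\to\rho_fM[f]$ strongly in $L^1_{\rm loc}$. In particular this needs $\rho_fM[f]\in L^1_{\rm loc}$. Since $\int_{|v|\le L}\rho_fM[f]\,\dv\le\rho_f^2$, with near equality wherever $|u_f|+\sqrt{\theta_f}\ll L$, this is essentially a local $L^2$ bound on the density. Mass, energy, entropy and $\rho_f^2\theta_f\in L^1$ do not provide it. Your splitting in the first paragraph also uses $\rho_fM[f]\varphi\in L^1$, whereas only $\frka_\delta\rho_fM[f]\in L^1_{\rm loc}$ can be shown. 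You notice the problem yourself when $\lambda_\e(\rho_{f_\e})f_\e$ turns out to behave like $\rho^2$.

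\paragraph{The fallback does not repair it.}
Its estimates are correct. The part of $Q^+_{\e,\delta}$ on $\{f_\e>N\}$ is $O\bigl(R/(\delta^2N)+1/\log R\bigr)$ in $L^1(K)$ uniformly in $\e$, and $G_\e\one_{\{f_\e\le N\}}$ is uniformly integrable on $K$. However, the assertion that ``$G_\e$, truncated at $f_\e\le N$, converges strongly by Vitali'' is false. Neither $\one_{\{f_\e\le N\}}$ nor $\chi_N(f_\e)$ has a pointwise limit, because $f_\e$ converges only weakly, so Vitali has no a.e.\ limit to act on. Moving $\chi_N(f_\e)$ into the weak-$\star$ factor leaves $G_\e$ alone as the ``strong'' factor, which is exactly what you could not prove. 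So the limit of $\beta_\delta'(f_\e)\chi_N(f_\e)G_\e$ is never identified.

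\paragraph{The missing idea.}
You do not need strong convergence of $G_\e$. The uniform integrability of $Q^+_{\e,\delta}$ itself (Lemma \ref{lem/ split weak}), which is where the factor $\beta_\delta'$ pays off, together with the a.e.\ convergence $G_\e\to G:=\rho_fM[f]$ from Lemma \ref{lem/ max L1}, already suffices. Truncate $G_\e$ in height rather than truncating $f_\e$:
\begin{itemize}
\item $\min\{G_\e,R\}\to\min\{G,R\}$ strongly in $L^1(K)$ by bounded convergence, so $\beta_\delta'(f_\e)\min\{G_\e,R\}\weakto\frka_\delta\min\{G,R\}$.
\item Comparing with the weakly compact family $(Q^+_{\e,\delta})_\e$ and using monotone convergence gives $\frka_\delta G\in L^1(K)$.
\item The remainder satisfies $0\le Q^+_{\e,\delta}-\beta_\delta'(f_\e)\min\{G_\e,R\}\le Q^+_{\e,\delta}\one_{\{G_\e>R\}}$. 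This is small uniformly in $\e$, because $\limsup_{\e\to0}|K\cap\{G_\e>R\}|\to0$ as $R\to\infty$ ($G$ is finite a.e.).
\end{itemize}
This is the paper's argument; an Egorov-type argument would work equally well. With it, your $\chi_N$ decomposition becomes unnecessary.
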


\begin{proof}
Let
\[
G_\e := \lambda_\e(\rho_{f_\e})M[f_\e], \quad G:=\rho_fM[f].
\]
Fix a compact set $K\subset[0,T]\times\ov{\calO}$. Along the subsequence fixed above, Lemma \ref{lem/ max L1} yields
\[
G_\e\xrightarrow[\e\to0]{}G\quad\text{almost everywhere on }K.
\]
For $R>0$, set
\[
G_{\e,R}:=\min\{G_\e,R\}, \quad G_R:=\min\{G,R\}.
\]
Bounded convergence and the weak$\star$ convergence of $\beta_\delta'(f_\e)$ give
\bq\label{eq/ gain trunc}
\beta_\delta'(f_\e)G_{\e,R} \weakto \frka_\delta G_R \quad\text{weakly in }L^1(K).
\eq

Since $0\le\beta_\delta'(f_\e)G_{\e,R}\le Q_{\e,\delta}^+$, Lemma \ref{lem/ split weak} and \eqref{eq/ gain trunc} yield
\[
\iiint_K\frka_\delta G_R\,\dt\dx\dv\le C_{K,\delta}.
\]
Monotone convergence as $R\to\infty$ gives $\frka_\delta G\in L^1(K)$ and $\frka_\delta G_R\to\frka_\delta G$ in $L^1(K)$.

We compare the truncated product with $Q_{\e,\delta}^+$. Since $0\le\beta_\delta'\le1$,
\[
0 \le Q_{\e,\delta}^+ - \beta_\delta'(f_\e)G_{\e,R} = \beta_\delta'(f_\e)(G_\e-R)_+ \le Q_{\e,\delta}^+ \one_{\{G_\e>R\}}.
\]
The family $(Q_{\e,\delta}^+)_\e$ is uniformly integrable on $K$. Moreover,
\[
\{G_\e>R\} \subset \lt\{G>\frac R2\rt\} \cup \lt\{|G_\e-G|>\frac R2\rt\}.
\]
Since $G$ is finite almost everywhere (because both $\rho_f$ and $M[f]$ are $L^1_{\rm loc}((0,\infty)\times \ov{\calO}$)) and $G_\e\to G$ almost everywhere on $K$, the above inclusion gives
\[
\lim_{R\to\infty}\limsup_{\e\to0}|K\cap\{G_\e>R\}|=0.
\]
Consequently,
\bq\label{eq/ gain err}
\lim_{R\to\infty} \limsup_{\e\to0} \iiint_K \lt| Q_{\e,\delta}^+ - \beta_\delta'(f_\e)G_{\e,R} \rt|  \dt\dx\dv =0.
\eq

Combining the strong convergence $\frka_\delta G_R \to \frka_\delta G$ with \eqref{eq/ gain trunc} and \eqref{eq/ gain err} proves \eqref{eq/ qplus weak}. This completes the proof.
\end{proof}

\begin{proposition}\label{prop/ qminus}
For every fixed $\delta=\delta_j$,
\[
Q_{\e,\delta}^- \weakto \rho_f\frkb_\delta \quad\text{weakly in } L^1_{\rm loc}([0,T]\times\ov{\calO}).
\]
\end{proposition}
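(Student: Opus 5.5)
The plan is to write $Q_{\e,\delta}^- = \lambda_\e(\rho_{f_\e})\, f_\e\beta_\delta'(f_\e)$. This is a product of a factor depending only on $(t,x)$, which converges strongly, and a bounded factor that converges weak$\star$ to $\frkb_\delta$. The argument mirrors Proposition \ref{prop/ qplus}, with truncation of the macroscopic rate replacing truncation of the gain.

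Fix a compact $K\subset[0,T]\times\ov{\calO}$ and a test function $\psi\in L^\infty(K)$. Set $\Lambda_\e:=\lambda_\e(\rho_{f_\e})$ and, for $R>0$, $\Lambda_{\e,R}:=\min\{\Lambda_\e,R\}$ and $\rho_R:=\min\{\rho_f,R\}$.

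\textbf{Step 1 (truncated product).} By Lemma \ref{lem/ max L1}, $\Lambda_\e\to\rho_f$ almost everywhere, so $\Lambda_{\e,R}\to\rho_R$ almost everywhere and boundedly. Dominated convergence on the compact set $K$ gives $\Lambda_{\e,R}\to\rho_R$ strongly in $L^1(K)$. Since $0\le f_\e\beta_\delta'(f_\e)\le 1/\delta$, I can write
\[
\iiint_K \Lambda_{\e,R}f_\e\beta_\delta'(f_\e)\psi
=\iiint_K \rho_R f_\e\beta_\delta'(f_\e)\psi+\iiint_K (\Lambda_{\e,R}-\rho_R)f_\e\beta_\delta'(f_\e)\psi .
\]
The second term is bounded by $\delta^{-1}\|\psi\|_\infty\|\Lambda_{\e,R}-\rho_R\|_{L^1(K)}\to0$. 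The first converges to $\iiint_K\rho_R\frkb_\delta\psi$ by the weak$\star$ convergence $f_\e\beta_\delta'(f_\e)\weakstarto\frkb_\delta$, since $\rho_R\psi\one_K\in L^1$.

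\textbf{Step 2 (removing the truncation).} The error satisfies
\[
0\le Q_{\e,\delta}^- -\Lambda_{\e,R}f_\e\beta_\delta'(f_\e)\le \delta^{-1}(\Lambda_\e-R)_+\le \delta^{-1}\Lambda_\e\one_{\{\Lambda_\e>R\}} .
\]
Integrate over $K\subset[0,T]\times\ov I\times\ov B_L$. The velocity integration contributes a factor $|B_L|$, so it suffices to show
\[
\lim_{R\to\infty}\sup_\e\iint_{(0,T)\times I}\Lambda_\e\one_{\{\Lambda_\e>R\}}\,\dt\dx=0 .
\]
This is the uniform integrability of $(\Lambda_\e)$, which follows from the strong convergence \eqref{eq/ rate str}. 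Similarly $0\le\rho_f\frkb_\delta-\rho_R\frkb_\delta\le\delta^{-1}(\rho_f-R)_+$, and this tends to $0$ in $L^1(K)$ because $\rho_f\in L^1$.

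\textbf{Step 3 (conclusion).} Letting $\e\to0$ first and then $R\to\infty$ gives $\iiint_KQ_{\e,\delta}^-\psi\to\iiint_K\rho_f\frkb_\delta\psi$. This proves the stated weak convergence; Lemma \ref{lem/ split weak} already provides weak compactness, so the limit is identified along the whole fixed subsequence.

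The only potentially delicate point is Step 2, namely controlling the large-density region uniformly in $\e$. Here the structural bound $0\le\lambda_\e\le z$ together with \eqref{eq/ rate str} does all the work, and no entropy-production argument is needed, unlike for the gain term.
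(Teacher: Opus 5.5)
Your proof is correct and uses the same mechanism as the paper's. The factor $f_\e\beta_\delta'(f_\e)$ is bounded by $\delta^{-1}$ and converges weak$\star$ to $\frkb_\delta$, while $\lambda_\e(\rho_{f_\e})\to\rho_f$ strongly in $L^1$ by \eqref{eq/ rate str}, so the product converges weakly. The truncation at level $R$ is unnecessary, however. It was needed in Proposition \ref{prop/ qplus} only because $G_\e$ converges merely a.e. Here one has directly $\iiint_K|\Lambda_\e-\rho_f|\,f_\e\beta_\delta'(f_\e)|\psi|\le\delta^{-1}\|\psi\|_{L^\infty}|B_L|\,\|\Lambda_\e-\rho_f\|_{L^1((0,T)\times I)}\to0$ together with $\rho_f\psi\one_K\in L^1(K)$, which is exactly the paper's one-line product argument.
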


\begin{proof}
We write $Q_{\e,\delta}^- = (f_\e\beta_\delta'(f_\e) ) \lambda_\e(\rho_{f_\e})$. On every compact velocity set, $f_\e\beta_\delta'(f_\e)$ converges weakly$\star$ in $L^\infty$ to $\frkb_\delta$, while $\lambda_\e(\rho_{f_\e})$ converges strongly in $L^1$ to $\rho_f$ by \eqref{eq/ rate str}. The product convergence gives the claim. This completes the proof.
\end{proof}

We may now pass to the limit in the interior Green formula. Since
\[
\beta_\delta(f_{\e0}) \longrightarrow  \beta_\delta(f_0) \quad\text{strongly in }L^1(\calO)
\]
and Propositions \ref{prop/ qplus}--\ref{prop/ qminus} identify the collision term, we obtain
\bq\label{eq/ ovbeta}
\p_t\ov{\beta}_\delta +v_1\p_x\ov{\beta}_\delta = \rho_f ( \frka_\delta M[f]-\frkb_\delta  ) \quad\text{in }\calD'((0,T)\times\calO),
\eq
with initial trace
\[
\ov{\beta}_\delta|_{t=0} = \beta_\delta(f_0).
\]

%
%
%
%
%
%
%
%
%
%
%

\subsection{Passage to the limit $\delta\to0$}\label{subsec/ delta limit}
A direct passage to the limit $\delta\to0$ in \eqref{eq/ ovbeta} is not suitable, since the limiting collision term need not be locally integrable without an additional renormalization. We thus set
\[
\gamma(z):=\log(1+z).
\]
Since the right-hand side of \eqref{eq/ ovbeta} belongs to $L^1_{\rm loc}$ for fixed $\delta$, the renormalization theorem for transport equations \cite{Mis10} gives
\bq\label{eq/ beta twice}
\p_t\gamma(\ov{\beta}_\delta) +v_1\p_x\gamma(\ov{\beta}_\delta) = \frkQ_\delta \quad\text{in }\calD'((0,T)\times\calO),
\eq
where
\[
\frkQ_\delta := \gamma'(\ov{\beta}_\delta) \rho_f ( \frka_\delta M[f]-\frkb_\delta  ).
\]

We first identify the limits of $\ov{\beta}_\delta$, $\frka_\delta$, and $\frkb_\delta$.

\begin{lemma}\label{lem/ n delta}
Let $(X,\mu)$ be a $\sigma$-finite measure space. Suppose $g^n,\eta_\delta^n,\ov\eta_\delta,g\in L^1(X)$ satisfy
\[
g^n\weakto g, \quad \eta_\delta^n\weakto\ov\eta_\delta \quad\text{weakly in }L^1(X), \quad \text{and} \quad \lim_{\delta\to0} \sup_{n \in \bbN} \|\eta_\delta^n-g^n\|_{L^1(X)} =0.
\]
Then, we have
\[
\ov\eta_\delta \longrightarrow  g \quad\text{strongly in }L^1(X).
\]

\end{lemma}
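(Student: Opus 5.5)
The plan is to prove the stronger bound
\[
\|\ov\eta_\delta-g\|_{L^1(X)}\le \sup_{n\in\bbN}\|\eta_\delta^n-g^n\|_{L^1(X)},
\]
from which the claim follows immediately by the third hypothesis. The natural tool is the weak lower semicontinuity of the $L^1$ norm, applied to the difference sequence $\eta_\delta^n-g^n$ for each fixed $\delta$.

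First, fix $\delta$. By linearity of weak limits, $\eta_\delta^n-g^n\weakto\ov\eta_\delta-g$ weakly in $L^1(X)$. To pass to the norm, I use the duality pairing with $L^\infty(X)$, which is legitimate since $\mu$ is $\sigma$-finite and so $L^\infty(X)$ is the dual of $L^1(X)$. Concretely, set $\psi:=\operatorname{sgn}(\ov\eta_\delta-g)\in L^\infty(X)$, which satisfies $\|\psi\|_{L^\infty}\le1$. Then
\[
\|\ov\eta_\delta-g\|_{L^1}=\int_X(\ov\eta_\delta-g)\psi\,\dd\mu=\lim_{n\to\infty}\int_X(\eta_\delta^n-g^n)\psi\,\dd\mu\le\liminf_{n\to\infty}\|\eta_\delta^n-g^n\|_{L^1}.
\]
The right-hand side is bounded by $\sup_n\|\eta_\delta^n-g^n\|_{L^1}$, which gives the displayed inequality.

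Second, let $\delta\to0$. The supremum tends to zero by assumption, so $\ov\eta_\delta\to g$ strongly in $L^1(X)$.

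I do not see a genuine obstacle here. The only point to check is that the sign function is an admissible test function for weak $L^1$ convergence, and this holds because $\psi$ is a bounded measurable function and $\sigma$-finiteness guarantees $(L^1)^*=L^\infty$.
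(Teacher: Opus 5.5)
Your proof is correct and matches the paper's argument. The paper also bounds $\|\ov\eta_\delta-g\|_{L^1}$ by $\liminf_n\|\eta_\delta^n-g^n\|_{L^1}\le\sup_n\|\eta_\delta^n-g^n\|_{L^1}$ using weak lower semicontinuity of the $L^1$ norm; you simply make that step explicit by testing against $\operatorname{sgn}(\ov\eta_\delta-g)$. (A minor point: $\sigma$-finiteness is not needed for this step, since every bounded measurable function defines a continuous functional on $L^1$ whether or not $(L^1)^*=L^\infty$.)
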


\begin{proof}
Since the $L^1$-norm is weakly lower semicontinuous,
\[
\|\ov\eta_\delta-g\|_{L^1(X)} \le \liminf_{n\to\infty} \|\eta_\delta^n-g^n\|_{L^1(X)} \le \sup_{n \in \bbN} \|\eta_\delta^n-g^n\|_{L^1(X)}.
\]
The right-hand side tends to zero as $\delta\to0$. This completes the proof.
\end{proof}

\begin{lemma}\label{lem/ delta lim}
For every $T>0$, as $\delta\downarrow0$ along the fixed sequence $(\delta_j)$,
\begin{align}
\ov\beta_\delta&\nearrow f \quad \text{strongly in }L^1((0,T)\times\calO), \nonumber \\
\frka_\delta&\nearrow1 \quad \text{strongly in }L^1_{\rm loc}([0,T]\times\ov\calO),\label{eq/ a to 1}\\
\frkb_\delta&\nearrow f \quad \text{strongly in }L^1((0,T)\times\calO).\label{eq/ b to f}
\end{align}
\end{lemma}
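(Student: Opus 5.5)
The plan is to derive all three convergences from Lemma \ref{lem/ n delta}, using the pointwise identities $\beta_\delta(z)=\frac{z}{1+\delta z}$, $\beta_\delta'(z)=\frac{1}{(1+\delta z)^2}$ and $z\beta_\delta'(z)=\frac{z}{(1+\delta z)^2}$. Each of these is nondecreasing as $\delta\downarrow0$, so the weak limits inherit the monotonicity: for $\delta_{j+1}<\delta_j$ the differences $\beta_{\delta_{j+1}}(f_\e)-\beta_{\delta_j}(f_\e)$ are nonnegative, and so are their weak$\star$ limits. This gives the monotone arrows ``$\nearrow$''. It then remains to prove the strong convergences.

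\textbf{Convergence of $\ov\beta_\delta$.} Apply Lemma \ref{lem/ n delta} with $X=(0,T)\times\calO$, $g^n=f_\e$ and $\eta^n_\delta=\beta_\delta(f_\e)$. The hypotheses hold as follows. The convergence $f_\e\weakto f$ in $L^1$ is \eqref{eq/ f weak}. The weak$\star$ limit $\ov\beta_\delta$ is also the weak $L^1$ limit, since $\beta_\delta(f_\e)\le f_\e$ gives uniform integrability. The uniform closeness follows by integrating \eqref{eq/ beta err unif} over $(0,T)$, which gives $\sup_\e\|\beta_\delta(f_\e)-f_\e\|_{L^1((0,T)\times\calO)}\to0$.

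\textbf{Convergence of $\frkb_\delta$.} The same argument applies with $\eta^n_\delta=f_\e\beta_\delta'(f_\e)$. Here
\[
0\le f_\e-f_\e\beta_\delta'(f_\e)=f_\e\Big(1-\frac1{(1+\delta f_\e)^2}\Big)\le 2\frac{\delta f_\e^2}{1+\delta f_\e},
\]
because $1-(1+y)^{-2}=\frac{y(2+y)}{(1+y)^2}\le\frac{2y}{1+y}$. The bound \eqref{eq/ beta err} then gives the uniform closeness exactly as before. Weak $L^1$ convergence to $\frkb_\delta$ again follows from domination by $f_\e$.

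\textbf{Convergence of $\frka_\delta$.} This is the only step not covered directly by Lemma \ref{lem/ n delta}, since $1-\beta_\delta'(f_\e)$ is not small in $L^1$ where $f_\e$ is large, and the domain has infinite measure. Work on a compact set $K=[0,T]\times\ov I\times\ov B_L$. There
\[
0\le 1-\beta_\delta'(f_\e)\le \min\{1,2\delta f_\e\}\le 2\delta N+\one_{\{f_\e>N\}},
\]
so the $L^1(K)$ norm is at most $2\delta N|K|+\frac{1}{N}\sup_\e\|f_\e\|_{L^1}$. This tends to zero uniformly in $\e$ after first letting $\delta\to0$ and then $N\to\infty$. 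Applying Lemma \ref{lem/ n delta} on $K$, with $g^n\equiv1$ and $\eta^n_\delta=\beta_\delta'(f_\e)$ (weak$\star$ convergence in $L^\infty$ implies weak $L^1(K)$ convergence), gives $\frka_\delta\to1$ in $L^1(K)$. Since $L$ is arbitrary, this yields \eqref{eq/ a to 1}.

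The main point needing care is this last, local statement for $\frka_\delta$, together with checking that the weak$\star$ limits agree with the weak $L^1$ limits. Both follow from the uniform bounds in \eqref{eq/ bdd}.
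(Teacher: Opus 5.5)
Your proposal is correct and follows essentially the paper's route: three applications of Lemma \ref{lem/ n delta} using pointwise bounds and the uniform estimates \eqref{eq/ bdd}, with the monotonicity inherited from the pointwise monotonicity in $\delta$. Your reduction of the $\frkb_\delta$ error to $2(f_\e-\beta_\delta(f_\e))$ is a tidy shortcut.

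One small correction concerns $\frka_\delta$. Since $1-\beta_\delta'(f_\e)\le 2\delta f_\e$, this quantity is in fact $O(\delta)$ in $L^1((0,T)\times\calO)$ uniformly in $\e$, which is exactly the bound the paper uses. Your truncation at level $N$ is therefore unnecessary. The only reason to localize is that the constant $1$ is not integrable on $(0,T)\times\calO$.
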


\begin{proof}
Since $0\le\beta_\delta(f_\e)\le f_\e$, the family $(\beta_\delta(f_\e))_\e$ is weakly compact in $L^1((0,T)\times\calO)$. Its weak $L^1$ limit coincides with $\ov{\beta}_\delta$ by uniqueness of distributional limits.

By \eqref{eq/ beta err unif}, the approximation is uniform in $\e$ in $L^1((0,T)\times\calO)$. Thus Lemma \ref{lem/ n delta} gives
\[
\ov{\beta}_\delta \longrightarrow  f \quad\text{strongly in }L^1((0,T)\times\calO).
\]

For the monotonicity, if $0<\delta_1<\delta_2$, then $\beta_{\delta_1}(f_\e) \ge \beta_{\delta_2}(f_\e)$. Passing to the weak limit gives $\ov{\beta}_{\delta_1} \ge \ov{\beta}_{\delta_2}$ almost everywhere. Hence $\ov{\beta}_\delta\nearrow f$ as $\delta\downarrow0$.

For the first convergence,
\[
\lt| \beta_\delta'(f_\e)-1 \rt| = 1-\frac1{(1+\delta f_\e)^2} = \frac{ 2\delta f_\e+\delta^2f_\e^2 }{ (1+\delta f_\e)^2 } \le 3\delta f_\e.
\]
Thus, on every finite time interval,
\[
\sup_{\e>0} \iiint_{(0,T)\times\calO} |\beta_\delta'(f_\e)-1| \,\dt\dx\dv \le C_T\delta.
\]
Applying Lemma \ref{lem/ n delta} on compact subsets, with $g^\e\equiv1$, gives \eqref{eq/ a to 1}.

For $\frkb_\delta$, let $L>1$. We compute
\[
|f_\e\beta_\delta'(f_\e)-f_\e| = \frac{ 2\delta f_\e^2+\delta^2f_\e^3 }{ (1+\delta f_\e)^2 } \le (2\delta L+\delta^2L^2)f_\e + 3f_\e\one_{\{f_\e>L\}} \le (2\delta L+\delta^2L^2)f_\e + \frac{3}{\log L} f_\e|\log f_\e|.
\]
Using \eqref{eq/ bdd}, we first choose $L$ large and then $\delta$ small to obtain
\[
\lim_{\delta\to0} \sup_{\e>0} \|f_\e\beta_\delta'(f_\e)-f_\e\|_ {L^1((0,T)\times\calO)} =0.
\]
Lemma \ref{lem/ n delta} gives \eqref{eq/ b to f}.

Finally, both $\beta_\delta'(z)$ and $z\beta_\delta'(z)$ increase pointwise as $\delta\downarrow0$. Passing to the weak limits as above shows that $\frka_\delta$ and $\frkb_\delta$ have the same monotonicity. This completes the proof.
\end{proof}

We now identify the limit of the collision term in \eqref{eq/ beta twice}. Write
\[
\frkQ_\delta^+ := \gamma'(\ov{\beta}_\delta) \rho_f\frka_\delta M[f]  \quad \text{and} \quad \frkQ_\delta^- := \gamma'(\ov{\beta}_\delta) \rho_f\frkb_\delta.
\]

\begin{lemma}\label{lem/ delta dom}
Let $\nu=F_\nu\,\dt\dx\dv+\nu_s$ be the Lebesgue decomposition of the measure in \eqref{eq/ mu}, with $0 \le F_\nu \in L^1_{\rm loc}([0,\infty)\times \ov{\calO}$). Then for every $\delta>0$,
\bq\label{eq/ Q dom}
0\le\frkQ_\delta^-\le\rho_f,\quad 0\le\frkQ_\delta^+\le2\frkQ_\delta^-+\frac2{\log2}F_\nu.
\eq
Consequently, both $(\frkQ_\delta^-)_\delta$ and $(\frkQ_\delta^+)_\delta$ are locally uniformly integrable.
\end{lemma}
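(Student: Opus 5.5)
The plan is to establish both bounds in \eqref{eq/ Q dom} first at the $\e$-level and then pass to the weak limits identified in Propositions \ref{prop/ qplus} and \ref{prop/ qminus}. Lower semicontinuity controls the entropy-production measure from below, and a Radon--Nikodym argument removes its singular part.

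\textbf{The loss term.} Since $\ov\beta_\delta\ge0$, we have $\gamma'(\ov\beta_\delta)=(1+\ov\beta_\delta)^{-1}\le1$. At the $\e$-level, $0\le f_\e\beta_\delta'(f_\e)\le f_\e$ and $f_\e\beta_\delta'(f_\e)=\beta_\delta(f_\e)/(1+\delta f_\e)\le\beta_\delta(f_\e)$. Passing to weak limits gives $\frkb_\delta\le\ov\beta_\delta$, hence
\[
\gamma'(\ov\beta_\delta)\frkb_\delta=\frac{\frkb_\delta}{1+\ov\beta_\delta}\le1,
\]
so $0\le\frkQ_\delta^-\le\rho_f$. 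Local uniform integrability of $(\frkQ_\delta^-)_\delta$ follows because $\rho_f\in L^1((0,T)\times I)$, lifted to compact velocity sets.

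\textbf{The gain term at the $\e$-level.} Pointwise, we split according to whether $M[f_\e]\le 2f_\e$. If $M[f_\e]\le2f_\e$, then $Q^+_{\e,\delta}\le2Q^-_{\e,\delta}$. If $M[f_\e]>2f_\e$, then $\log(M/f)\ge\log2$ and $M-f\ge M/2$, so
\[
\lambda_\e(\rho_{f_\e})M[f_\e]\le 2\lambda_\e(\rho_{f_\e})(M-f)\le\frac{2}{\log2}D_\e[f_\e].
\]
Using $\beta_\delta'\le1$, this gives $Q^+_{\e,\delta}\le2Q^-_{\e,\delta}+\frac2{\log2}D_\e[f_\e]$ everywhere.

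\textbf{Passage to the limit.} I test this inequality against $0\le\varphi\in C_c$ and let $\e\to0$, using the weak $L^1$ convergences of Propositions \ref{prop/ qplus}--\ref{prop/ qminus} and the weak$\star$ convergence \eqref{eq/ mu}. This yields the measure inequality
\[
\rho_f\frka_\delta M[f]\le 2\rho_f\frkb_\delta+\tfrac2{\log2}\nu .
\]
The left side and the first term on the right are absolutely continuous. Restricting to a Lebesgue-null set carrying $\nu_s$ therefore shows the inequality holds with $\nu$ replaced by $F_\nu$, a.e.

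\textbf{Multiplying by $\gamma'(\ov\beta_\delta)$.} This factor lies in $(0,1]$ and is multiplied a.e. after the limit, not before, so no weak--weak product issue arises. Multiplying the a.e. inequality by it gives $\frkQ_\delta^+\le2\frkQ_\delta^-+\frac2{\log2}\gamma'(\ov\beta_\delta)F_\nu\le2\frkQ_\delta^-+\frac2{\log2}F_\nu$.

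\textbf{Uniform integrability.} Local uniform integrability in $\delta$ of $(\frkQ_\delta^+)_\delta$ now follows, since it is dominated by $2\rho_f+\frac2{\log2}F_\nu$, which is fixed and in $L^1_{\rm loc}$.

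\textbf{Main obstacle.} The main delicate step is the measure comparison discarding $\nu_s$. This needs the limits of the $Q^\pm$ terms to be genuine $L^1_{\rm loc}$ functions up to the boundary, which Lemma \ref{lem/ split weak} supplies. It also needs the inequality to be localized on closed sets $[0,T]\times\ov\calO$ with nonnegative continuous test functions, so I would work with test functions supported in the interior (where \eqref{eq/ ovbeta} is used), for which this is routine.
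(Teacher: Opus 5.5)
Your proposal is correct and follows the paper's proof essentially step for step. The steps coincide: the pointwise bound $\frkb_\delta\le\ov\beta_\delta$ controls the loss term, the $\e$-level splitting at $M[f_\e]=2f_\e$ gives $Q^+_{\e,\delta}\le 2Q^-_{\e,\delta}+\frac{2}{\log 2}D_\e[f_\e]$, passing to the limit yields a measure inequality, comparing absolutely continuous parts discards $\nu_s$, and one multiplies by $\gamma'(\ov\beta_\delta)\in(0,1]$ only after the limit. Your restriction to interior test functions is harmless, since the boundary of $[0,T]\times\ov\calO$ is Lebesgue-null and only the resulting a.e. inequality is needed.
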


\begin{proof}
For every $\e$,
\[
f_\e\beta_\delta'(f_\e) = \frac{f_\e}{(1+\delta f_\e)^2} \le \frac{f_\e}{1+\delta f_\e} = \beta_\delta(f_\e).
\]
Passing to the weak$\star$ limit gives $0\le\frkb_\delta\le\ov{\beta}_\delta$ almost everywhere. Since $\gamma'(\ov{\beta}_\delta) = \frac1{1+\ov{\beta}_\delta}$, we obtain
\[
\frkQ_\delta^- = \rho_f \frac{\frkb_\delta} {1+\ov{\beta}_\delta} \le\rho_f.
\]
On compact velocity sets, $\rho_f$ is integrable, which proves the local uniform integrability.

The gain term is controlled by the entropy production. We first derive an estimate at the $\e$-level. On $\{M[f_\e]<2f_\e\}$, we get $Q_{\e,\delta}^+ \le 2Q_{\e,\delta}^-$. On $\{M[f_\e]\ge2f_\e\}$, we estimate
\[
D_\e[f_\e] = \lambda_\e(\rho_{f_\e}) (M[f_\e]-f_\e) \log\frac{M[f_\e]}{f_\e} \ge \frac{\log2}{2} \lambda_\e(\rho_{f_\e}) M[f_\e].
\]
Since $\beta_\delta'(f_\e)\le1$, this yields
\[
Q_{\e,\delta}^+ \one_{\{M[f_\e]\ge2f_\e\}} \le \frac{2}{\log2} D_\e[f_\e].
\]
Hence
\bq\label{eq/ qvd est}
Q_{\e,\delta}^+ \le 2Q_{\e,\delta}^- + \frac{2}{\log2} D_\e[f_\e].
\eq

Passing to the limit $\e\to0$ in \eqref{eq/ qvd est}, using Propositions \ref{prop/ qplus}--\ref{prop/ qminus} and \eqref{eq/ mu}, gives the inequality of nonnegative Radon measures
\[
\rho_f\frka_\delta M[f]\,\dt\dx\dv \le 2\rho_f\frkb_\delta\,\dt\dx\dv + \frac{2}{\log2}\nu.
\]
Comparing the absolutely continuous parts in the Lebesgue decomposition, we obtain
\[
\rho_f\frka_\delta M[f] \le 2\rho_f\frkb_\delta + \frac{2}{\log2}F_\nu \quad\text{a.e.}
\]
Multiplication by
\[
0\le \gamma'(\ov{\beta}_\delta) = \frac1{1+\ov{\beta}_\delta} \le1
\]
and the definition of $\frkQ_\delta^\pm$ give \eqref{eq/ Q dom}. This completes the proof.
\end{proof}

\begin{proposition}\label{prop/ collision delta}
As $\delta\to0$ along $(\delta_j)$,
\begin{align}
\frkQ_\delta^+ &\longrightarrow  \gamma'(f)\rho_fM[f] \quad \text{strongly in }L^1_{\rm loc}([0,T]\times\ov\calO), \label{eq/ Qplus delta}\\
\frkQ_\delta^- &\longrightarrow  \gamma'(f)\rho_ff \quad \text{strongly in }L^1_{\rm loc}([0,T]\times\ov\calO). \label{eq/ Qminus delta}
\end{align}
\end{proposition}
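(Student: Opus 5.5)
The plan is to combine almost everywhere convergence of the integrands along a subsequence of $(\delta_j)$ with the local uniform integrability from Lemma \ref{lem/ delta dom}, and then conclude with Vitali's theorem on each compact $K\subset[0,T]\times\ov\calO$.

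\emph{Step 1: almost everywhere limits.} By Lemma \ref{lem/ delta lim}, $\ov\beta_\delta\to f$ and $\frkb_\delta\to f$ strongly in $L^1((0,T)\times\calO)$, and $\frka_\delta\to1$ strongly in $L^1_{\rm loc}$. All three families are monotone in $\delta$, so pointwise limits exist almost everywhere along the full sequence $(\delta_j)$. Strong $L^1$ convergence identifies these pointwise limits as $f$, $f$ and $1$, respectively, so no further extraction is needed. Since $\gamma'(z)=\frac1{1+z}$ is continuous and bounded by $1$ on $[0,\infty)$, we get $\gamma'(\ov\beta_\delta)\to\gamma'(f)$ almost everywhere. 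Because $\rho_f$ and $M[f]$ do not depend on $\delta$ and are finite almost everywhere, this gives
\[
\frkQ_\delta^+\to\gamma'(f)\rho_fM[f], \quad \frkQ_\delta^-\to\gamma'(f)\rho_ff \quad\text{a.e. on }K.
\]

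\emph{Step 2: uniform integrability.} By \eqref{eq/ Q dom}, $0\le\frkQ_\delta^-\le\rho_f$. The function $\rho_f$ is integrable on $K$, since $K$ has compact velocity projection and $\rho_f\in L^1((0,T)\times I)$. This gives a fixed $\delta$-independent integrable majorant, so \eqref{eq/ Qminus delta} follows from dominated convergence.

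For the gain term, \eqref{eq/ Q dom} gives $0\le\frkQ_\delta^+\le 2\rho_f+\frac{2}{\log2}F_\nu$. Here $F_\nu\in L^1_{\rm loc}([0,\infty)\times\ov\calO)$, being the absolutely continuous part of a locally finite Radon measure. This is again a fixed integrable majorant on $K$, and dominated convergence yields \eqref{eq/ Qplus delta}.

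\emph{Main obstacle.} The delicate point is not this dominated convergence step but the justification already encoded in Lemma \ref{lem/ delta dom}. There, passing to the limit in the $\e$-level inequality \eqref{eq/ qvd est} produces a measure inequality, and only its absolutely continuous part $F_\nu$ is usable as a pointwise majorant. The singular part $\nu_s$ never needs to be controlled. This is consistent with the comparison: the left-hand side $\rho_f\frka_\delta M[f]$ is absolutely continuous, so the comparison of Lebesgue decompositions is legitimate.

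The one technical item I would double-check is measurability and almost everywhere finiteness of $\gamma'(f)\rho_fM[f]$ on $K$. This follows from the majorant itself by Fatou's lemma, since the limit inherits the bound $2\rho_f+\frac{2}{\log2}F_\nu$. Because the fixed majorants are explicit, Vitali's theorem reduces here to ordinary dominated convergence.
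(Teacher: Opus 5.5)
Your proposal is correct and follows essentially the same argument as the paper. The paper also obtains almost everywhere convergence of the integrands from Lemma \ref{lem/ delta lim}, takes local uniform integrability from the bounds in Lemma \ref{lem/ delta dom}, and concludes with Vitali's theorem. You instead write out the fixed majorant $2\rho_f+\frac{2}{\log 2}F_\nu$ and apply dominated convergence directly, which is the same argument made explicit.
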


\begin{proof}
Lemma \ref{lem/ delta lim} implies
\[
\ov{\beta}_\delta\to f, \quad \frka_\delta\to1, \quad \frkb_\delta\to f \quad \text{a.e.}
\]
Hence
\[
\frkQ_\delta^+ \longrightarrow  \gamma'(f)\rho_fM[f], \quad \frkQ_\delta^- \longrightarrow  \gamma'(f)\rho_ff
\]
almost everywhere.

Lemma \ref{lem/ delta dom} gives local uniform integrability of both families. Vitali's theorem gives \eqref{eq/ Qplus delta}--\eqref{eq/ Qminus delta}. This completes the proof.
\end{proof}

Passing to the limit $\delta\to0$ in \eqref{eq/ beta twice}, using Lemma \ref{lem/ delta lim} and Proposition \ref{prop/ collision delta}, gives
\[
\p_t\gamma(f) +v_1\p_x\gamma(f) = \gamma'(f)\rho_f(M[f]-f) \quad\text{in }\calD'((0,T)\times\calO).
\]

%
%
%
%
%
%
%
%
%
%
%

\subsection{Boundary traces and conclusion}

We now return to the spatial boundary terms in \eqref{eq/ reg green}. We first extract the limiting outgoing traces.

\begin{lemma}
For every $T>0$, after passing to a further subsequence, there exist nonnegative functions $f_l$ and $f_r$ such that
\begin{align}
f_{\e l} &\weakto f_l \quad\text{weakly in } L^1((0,T)\times\R^3_-, |v_1|\,\dt\dv), \label{eq/ fl weak}\\
f_{\e r} &\weakto f_r \quad\text{weakly in } L^1((0,T)\times\R^3_+, v_1\,\dt\dv). \label{eq/ fr weak}
\end{align}

\end{lemma}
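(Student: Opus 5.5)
The plan is a Dunford--Pettis argument on the outgoing boundary sets, using the uniform outgoing estimates already established for the inflow case. We work on $X_-:=(0,T)\times\R^3_-$ with the finite measure-weighted space $L^1(X_-,|v_1|\,\dt\dv)$; the right endpoint with $v_1\,\dt\dv$ on $X_+:=(0,T)\times\R^3_+$ is handled in the same way. The goal is to show that $(f_{\e l})_\e$ is bounded and uniformly integrable with respect to the measure $\dd m:=|v_1|\,\dt\dv$, and that it is tight at infinity in velocity.

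\textbf{Step 1 (boundedness).} By \eqref{eq/ out bdd} in Lemma \ref{lem/ every}(iii),
\[
\sup_{\e>0}\int_{X_-} f_{\e l}\bigl(\la v\ra^2+|\log f_{\e l}|\bigr)\,\dd m\le C_T .
\]
This gives a uniform $L^1(\dd m)$ bound.

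\textbf{Step 2 (tightness).} On $\{|v|>R\}$ the mass is bounded by
\[
\int_{\{|v|>R\}} f_{\e l}\,\dd m\le \la R\ra^{-2}C_T .
\]
This is uniformly small as $R\to\infty$. Alternatively, one can invoke \eqref{eq/ bdy tail}.

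\textbf{Step 3 (equi-integrability).} The main point is that the entropy bound is stated against $\dd m$, whereas equi-integrability must hold with respect to $\dd m$ on sets where $\dd m$ may be small. Fix a measurable $A\subset X_-\cap\{|v|\le R\}$ and a level $L>1$. We split
\[
\int_A f_{\e l}\,\dd m\le L\,m(A)+\int_{\{f_{\e l}>L\}} f_{\e l}\,\dd m\le L\,m(A)+\frac{C_T}{\log L}.
\]
The weight $|v_1|$ enters both sides consistently, so no degeneracy near the grazing set $\{v_1=0\}$ arises. The function $|\log f_{\e l}|$ is independent of the weight, so the bound $f\one_{\{f>L\}}\le f|\log f|/\log L$ applies pointwise. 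Choosing $L$ large and then $m(A)$ small gives uniform integrability.

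\textbf{Step 4 (extraction and nonnegativity).} On bounded velocity sets the measure $m$ is finite, and Step 2 controls the complement. The Dunford--Pettis theorem on the $\sigma$-finite space $(X_-,m)$ then yields a subsequence converging weakly in $L^1(X_-,m)$ to some $f_l$. Nonnegativity of $f_l$ is preserved under weak limits. We extract a further subsequence for $f_{\e r}$ in the same way. A diagonal argument in $T$ makes the subsequence independent of $T$, and compatibility with the previously fixed subsequences is automatic since we only extract further.

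Among these steps, Step 3 is the only one requiring care, and it is routine once the entropy flux bound in \eqref{eq/ out bdd} is available. The genuinely harder task, identifying $f_l,f_r$ as the traces of the limit $f$ (and of $\gamma(f)$), is deferred to the subsequent argument.
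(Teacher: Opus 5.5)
Your proposal is correct and follows essentially the same route as the paper: the entropy part of \eqref{eq/ out bdd} gives equi-integrability on bounded velocity sets, the energy part gives tightness in velocity, and Dunford--Pettis then yields the weak limits. One wording slip: $|v_1|\,\dt\dv$ is only a $\sigma$-finite measure on $(0,T)\times\R^3_-$, not a finite one, but your Steps 2 and 4 already handle exactly this.
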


\begin{proof}
The entropy part of \eqref{eq/ out bdd} gives uniform integrability on sets of bounded velocity, while its energy part gives tightness in velocity. The Dunford--Pettis theorem thus yields \eqref{eq/ fl weak}--\eqref{eq/ fr weak}. This completes the proof.
\end{proof}

For every $\delta=\delta_j$, after a diagonal extraction from the same $\e$-subsequence, define
\bq\label{eq/ delta traces}
\beta_\delta(f_{\e l}) \weakstarto\frkl_\delta, \quad \beta_\delta(f_{\e r}) \weakstarto\frkr_\delta
\eq
in the corresponding weighted boundary spaces. Since $0\le\beta_\delta(z)\le z$, these convergences also hold weakly in $L^1$ with respect to the outgoing flux measures.

We first pass to the limit $\e\to0$ for fixed $\delta$. Using \eqref{eq/ delta traces}, Propositions \ref{prop/ qplus}--\ref{prop/ qminus}, and the strong convergence of $\beta_\delta(f_{\e0})$, the Green formula \eqref{eq/ reg green} yields
\bq\label{eq/ ovbeta bdy}
\p_t\ov{\beta}_\delta +v_1\p_x\ov{\beta}_\delta = \rho_f ( \frka_\delta M[f]-\frkb_\delta  )
\eq
up to the spatial boundary, with traces
\[
\begin{aligned}
\ov{\beta}_\delta|_{t=0} &= \beta_\delta(f_0),\\
\ov{\beta}_\delta|_{x=-\frac12,\ v_1<0} &= \frkl_\delta, \quad \ov{\beta}_\delta|_{x=-\frac12,\ v_1>0}  = \beta_\delta(g_l),\\
\ov{\beta}_\delta|_{x=\frac12,\ v_1>0} &= \frkr_\delta, \quad \ov{\beta}_\delta|_{x=\frac12,\ v_1<0}  = \beta_\delta(g_r).
\end{aligned}
\]
Equivalently, \eqref{eq/ ovbeta bdy} is understood through its Green formula for test functions in $\calD([0,T)\times\ov{\calO})$.

Applying the renormalization theorem to $\gamma(\ov{\beta}_\delta)$ gives
\bq\label{eq/ beta twice bdy}
\p_t\gamma(\ov{\beta}_\delta) +v_1\p_x\gamma(\ov{\beta}_\delta) = \frkQ_\delta
\eq
up to the spatial boundary, with traces
\[
\begin{aligned}
\gamma(\ov{\beta}_\delta)|_{t=0} &= \gamma(\beta_\delta(f_0)),\\
\gamma(\ov{\beta}_\delta)|_{x=-\frac12,\ v_1<0} &= \gamma(\frkl_\delta), \quad \gamma(\ov{\beta}_\delta)|_{x=-\frac12,\ v_1>0}  = \gamma(\beta_\delta(g_l)),\\
\gamma(\ov{\beta}_\delta)|_{x=\frac12,\ v_1>0} &= \gamma(\frkr_\delta), \quad \gamma(\ov{\beta}_\delta)|_{x=\frac12,\ v_1<0}  = \gamma(\beta_\delta(g_r)).
\end{aligned}
\]

It remains to identify the limits of the outgoing renormalized traces.

\begin{lemma}\label{lem/ in tr lim}
As $\delta\to0$ along $(\delta_j)$,
\begin{align}
\frkl_\delta &\nearrow f_l \quad \text{strongly in } L^1((0,T)\times\R^3_-, |v_1|\,\dt\dv), \label{eq/ ldelta str}\\
\frkr_\delta &\nearrow f_r \quad \text{strongly in } L^1((0,T)\times\R^3_+, v_1\,\dt\dv). \label{eq/ rdelta str}
\end{align}
\end{lemma}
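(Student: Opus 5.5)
This is the same mechanism as Lemma \ref{lem/ delta lim}, transferred to the outgoing boundary measures. We prove it in three steps: (a) a uniform-in-$\e$ smallness bound for $f_{\e l}-\beta_\delta(f_{\e l})$ in the weighted $L^1$ space, (b) an application of Lemma \ref{lem/ n delta} to get strong convergence, and (c) monotonicity. We treat $\frkl_\delta$ with $X=(0,T)\times\R^3_-$ and $\mu=|v_1|\,\dt\dv$; the right trace $\frkr_\delta$ is handled identically.

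For (a), we first note that $\beta_\delta(f_{\e l})\le f_{\e l}$ and \eqref{eq/ fl weak} make $(\beta_\delta(f_{\e l}))_\e$ weakly compact in $L^1(X,\mu)$. Hence the weak$\star$ limit $\frkl_\delta$ in \eqref{eq/ delta traces} is also its weak $L^1(X,\mu)$ limit, since weak-$L^1$ and weak$\star$ limits agree when tested against bounded functions of compact support. Next we use the pointwise bound \eqref{eq/ beta err} at the boundary. For every $L>1$,
\[
0\le f_{\e l}-\beta_\delta(f_{\e l})\le \delta L f_{\e l}+\frac{1}{\log L}f_{\e l}|\log f_{\e l}|.
\]
Integrating against $\mu$ and using the outgoing mass, energy and entropy bounds \eqref{eq/ out bdd} gives
\[
\sup_{\e>0}\|f_{\e l}-\beta_\delta(f_{\e l})\|_{L^1(X,\mu)}\le C_T\Big(\delta L+\frac1{\log L}\Big).
\]
Choosing $L$ large first and then $\delta$ small makes the supremum tend to zero as $\delta\to0$.

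For (b), Lemma \ref{lem/ n delta} applies with $g^n=f_{\e l}$, $\eta^n_\delta=\beta_\delta(f_{\e l})$ and $\ov\eta_\delta=\frkl_\delta$. It yields $\frkl_\delta\to f_l$ strongly in $L^1(X,\mu)$.

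For (c), if $\delta_1<\delta_2$ then $\beta_{\delta_1}(f_{\e l})\ge\beta_{\delta_2}(f_{\e l})$ pointwise. Testing against nonnegative functions and passing to the weak limit gives $\frkl_{\delta_1}\ge\frkl_{\delta_2}$ $\mu$-a.e. Together with (b), this gives $\frkl_\delta\nearrow f_l$.

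The only point requiring care is the consistency of the limits: the sequence $(\e)$ must be the common subsequence along which both \eqref{eq/ fl weak} and \eqref{eq/ delta traces} hold for every $\delta_j$, which the diagonal extraction already guarantees. Apart from that, the argument is routine once the boundary entropy bound \eqref{eq/ out bdd} is available. This is exactly where case \textbf{(In)} is simpler than \textbf{(Max)}, which lacks a direct outgoing entropy bound.
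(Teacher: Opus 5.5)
Your proposal is correct and takes the same route as the paper. Both arguments obtain weak compactness of $(\beta_\delta(f_{\e l}))_\e$ by domination and use the pointwise bound $0\le f_{\e l}-\beta_\delta(f_{\e l})\le \delta L f_{\e l}+\frac{1}{\log L}f_{\e l}|\log f_{\e l}|$, integrated against $|v_1|\,\dt\dv$ with \eqref{eq/ out bdd}; they then apply Lemma \ref{lem/ n delta} and get monotonicity by passing to weak limits. Your remark that the weak $L^1$ and weak$\star$ limits coincide along the common diagonal subsequence just spells out the paper's statement that the weak limit ``is necessarily $\frkl_\delta$''.
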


\begin{proof}
We give the argument at $x=-\frac12$. Since $0\le\beta_\delta(f_{\e l})\le f_{\e l}$ and $(f_{\e l})_\e$ is weakly compact in
\[
L^1((0,T)\times\R^3_-, |v_1|\,\dt\dv),
\]
the sequence $(\beta_\delta(f_{\e l}))_\e$ is weakly compact in the same space. Its weak limit is necessarily $\frkl_\delta$.

For $L>1$,
\[
0\le f_{\e l}-\beta_\delta(f_{\e l}) \le \delta Lf_{\e l} + \frac1{\log L} f_{\e l}|\log f_{\e l}|.
\]
Hence \eqref{eq/ out bdd} gives
\[
\lim_{\delta\to0} \sup_{\e>0} \int_0^T\int_{v_1<0} |f_{\e l}-\beta_\delta(f_{\e l})| |v_1|\,\dv\dt =0.
\]
Applying Lemma \ref{lem/ n delta} with the boundary measure $|v_1|\,\dt\dv$ proves \eqref{eq/ ldelta str}. The proof of \eqref{eq/ rdelta str} is identical.

The monotonicity follows from the pointwise monotonicity of $\beta_\delta$ with respect to $\delta$ and passage to the weak limit. This completes the proof.
\end{proof}

We can now let $\delta\to0$ in the Green formula associated with \eqref{eq/ beta twice bdy}. Since $\gamma$ is $1$-Lipschitz, Lemma \ref{lem/ delta lim} and Lemma \ref{lem/ in tr lim} give
\[
\gamma(\ov{\beta}_\delta) \longrightarrow  \gamma(f)
\]
strongly in the bulk and
\[
\gamma(\frkl_\delta) \longrightarrow  \gamma(f_l), \quad \gamma(\frkr_\delta) \longrightarrow  \gamma(f_r)
\]
strongly in the corresponding outgoing boundary spaces. Likewise,
\[
\gamma(\beta_\delta(f_0)) \longrightarrow  \gamma(f_0) \quad\text{in }L^1(\calO),
\]
and
\[
\gamma(\beta_\delta(g_l)) \longrightarrow  \gamma(g_l), \quad \gamma(\beta_\delta(g_r)) \longrightarrow  \gamma(g_r)
\]
in the corresponding incoming flux spaces. Proposition \ref{prop/ collision delta} handles the source term.

We conclude that
\bq\label{eq/ gamma green}
\p_t\gamma(f) +v_1\p_x\gamma(f) = \gamma'(f)\rho_f(M[f]-f)
\eq
in the renormalized Green formulation up to the spatial boundary, with traces
\begin{align*}
\gamma(f)|_{t=0} &= \gamma(f_0), \\
\gamma(f)|_{x=-\frac12,\ v_1<0} &= \gamma(f_l), \quad \gamma(f)|_{x=-\frac12,\ v_1>0} = \gamma(g_l), \\
\gamma(f)|_{x=\frac12,\ v_1>0} &= \gamma(f_r), \quad \gamma(f)|_{x=\frac12,\ v_1<0} = \gamma(g_r).
\end{align*}
The temporal trace at the terminal time is recovered from the classical trace theorem for transport equations; see \cite[Theorems 4.4--4.5]{Mis10}. Hence the Green formula holds on every finite time interval.

We next establish the strong time continuity of $f$. Set
\[
w:=\gamma(f)=\log(1+f).
\]
The trace theorem gives a representative such that
\[
w\in C([0,T];L^1_{\rm loc}(\calO)), \quad w(0)=\log(1+f_0).
\]
The energy and entropy bounds, together with Proposition \ref{prop/ super}, pass to the weak limit by velocity truncation and weak lower semicontinuity, applied with nonnegative test functions of time. In particular,
\[
\operatorname*{ess\,sup}_{0<t<T} \iint_{\calO} \lt( \la v\ra^2f(t)+\Phi(\la v\ra^2)f(t)+f(t)\log(1+f(t)) \rt)\,\dx\dv \le C_T.
\]

For every $t\in[0,T]$, define $f(t):=e^{w(t)}-1$. Approximating $t$ by times outside the exceptional null set and using the local continuity of $w$, followed by Fatou's lemma, extends the above bound to every $t\in[0,T]$.

Let $t_n\to t$. The continuity of $w$ implies that $f(t_n)\to f(t)$ locally in measure. By taking compact subintervals increasing to $I$, the same convergence holds in measure on $I\times B_R$ for every fixed $R>0$. The entropy bound gives uniform integrability of $(f(t_n))$ on this set; thus Vitali's theorem yields
\[
\iint_{I\times B_R}|f(t_n)-f(t)|\,\dx\dv \longrightarrow 0.
\]
On the other hand,
\[
\iint_{I\times\{|v|>R\}} |f(t_n)-f(t)|\,\dx\dv \le \frac{2C_T}{R^2}.
\]
Letting first $n\to\infty$ and then $R\to\infty$, we obtain
\[
f\in C([0,T];L^1(\calO)).
\]
An interpolation, now using the uniform bound on $\Phi(\la v\ra^2)f$ to control the energy tails, gives in a similar way
\[
f\in C([0,T];L^1(\calO,\la v\ra^2\,\dx\dv)).
\]
The initial trace of $w$ gives $f(0)=f_0$.

Finally, let $\beta\in\frkB$ be arbitrary and define
\[
\Gamma(s) := \beta(e^s-1), \quad s\ge0.
\]
Since
\[
\Gamma'(s) = e^s\beta'(e^s-1) = (1+z)\beta'(z) \big|_{z=e^s-1},
\]
the definition of $\frkB$ implies
\[
|\Gamma'(s)|\le C_\beta, \quad |\Gamma(s)|\le C_\beta s, \quad s\ge0.
\]
To justify the renormalization with $\Gamma$, let $(\Gamma_N)_{N\ge1}$ be a sequence of bounded-derivative truncations such that
\[
\Gamma_N\to\Gamma, \quad \Gamma_N'\to\Gamma' \quad\text{pointwise on }[0,\infty),
\]
and
\[
|\Gamma_N'(s)|\le C_\beta, \quad |\Gamma_N(s)|\le C_\beta s
\]
uniformly in $N$. Applying the renormalization theorem to $\gamma(f)=\log(1+f)$ and \eqref{eq/ gamma green} with $\Gamma_N$, we obtain
\[
\p_t\Gamma_N(\gamma(f)) +v_1\p_x\Gamma_N(\gamma(f)) = \Gamma_N'(\gamma(f)) \gamma'(f)\rho_f(M[f]-f).
\]
The source term is dominated by $C_\beta  | \gamma'(f)\rho_f(M[f]-f)|$, which belongs to $L^1_{\rm loc}$ by Proposition \ref{prop/ collision delta}. Moreover,
\[
|\Gamma_N(\gamma(f))| \le C_\beta\gamma(f) \le C_\beta f,
\]
and the same bound holds for the initial and boundary traces, since $\gamma(f_l)\le f_l$, $\gamma(f_r)\le f_r$, and similarly for the inflow data. Hence we may pass to the limit $N\to\infty$ in the Green formula.

Since $\Gamma(\gamma(f)) = \beta(f)$ and
\[
\Gamma'(\gamma(f)) \gamma'(f) \rho_f(M[f]-f)  = (1+f)\beta'(f) \frac{1}{1+f} \rho_f(M[f]-f) = \beta'(f)\rho_f(M[f]-f),
\]
we obtain, for every $\beta\in\frkB$,
\[
\p_t\beta(f) +v_1\p_x\beta(f) = \beta'(f)\rho_f(M[f]-f)
\]
with the corresponding initial and inflow boundary traces. Consequently, $f$ satisfies the Green formula \eqref{eq/ green} for every $\beta\in\frkB$, and hence is a renormalized solution of the BGK equation with the inflow boundary condition.

%
%
%
%
%
%
%
%
%
%
%

\section{Construction of a renormalized solution in the Maxwell case}\label{sec/ maxwell limit}

We now treat the Maxwell boundary condition \textbf{(Max)}. The interior compactness arguments and the passages to the limits $\e\to0$ and $\delta\to0$ developed in Sections \ref{subsec/ int lim} and \ref{subsec/ delta limit} are unchanged. The remaining issue is to identify the limiting boundary traces and to show that they satisfy the Maxwell boundary condition.

The boundary compactness mechanism is closely related to the general results of \cite[Theorems 2.4--2.5]{Mis10}, where the Darroz\`es--Guiraud information is combined with biting weak compactness. In the present slab setting the argument can be given directly, and we provide the details below.

Throughout this section, $T>0$ is fixed. All subsequences are extracted from the $\e$-subsequence already fixed in Section \ref{sec/ inflow limit}; in particular, all the interior limits obtained there remain valid.

%
%
%
%
%
%
%
%
%
%
%

\subsection{Compactness of the Maxwell boundary traces}

The arguments at $x=-\frac12$ and $x=\frac12$ are symmetric. We first consider the right boundary $x=\frac12$. Since $\alpha_r>0$ is fixed, \eqref{eq/ darr bdd}, Lemma \ref{lem/ j bounds}, and \eqref{eq/ bdy E reg} give
\bq\label{eq/ nice bounds}
\sup_{\e>0} \int_0^T ( \scrE_{\e r}(t)+J_{\e r}(t)  )\,\dt \le C_T,\quad \sup_{\e>0} \int_0^T\int_{v_1>0} \la v\ra^2f_{\e r}(t,v)v_1\,\dv\dt \le C_T.
\eq
Here and below $C_T$ may depend on $\alpha_r$.

Define the probability measure
\[
\dd\mu_+(v) := \frkM(v)v_1\one_{\{v_1>0\}}\,\dv,
\]
where the normalization follows from \eqref{eq/ M norm}, and set
\[
F_{\e r}(t,v) := \frac{f_{\e r}(t,v)}{\frkM(v)} \quad (v_1>0).
\]
Then
\bq\label{eq/ J mu}
J_{\e r}(t) = \int_{\R^3_+} F_{\e r}(t,v)\,\dd\mu_+(v).
\eq
Moreover,
\[
\scrE_{\e r}(t) = \int_{\R^3_+} h(F_{\e r})\,\dd\mu_+ - h(J_{\e r}(t)), \quad h(z):=z\log z.
\]

For $a,b\ge0$, define the relative entropy
\bq\label{eq/ rel ent}
h_{\rm rel}(b|a) :=
\begin{cases}
\displaystyle b\log\frac{b}{a}-(b-a), &a>0,\\[6pt]
0, &a=b=0,\\
+\infty, &a=0<b.
\end{cases}
\eq
If $J_{\e r}(t)>0$, then
\[
\scrE_{\e r}(t) = \int_{\R^3_+} \Big[ h(F_{\e r}) -h(J_{\e r}) -h'(J_{\e r}) (F_{\e r}-J_{\e r}) \Big] \,\dd\mu_+ = \int_{\R^3_+} h_{\rm rel} ( F_{\e r}|J_{\e r}  ) \,\dd\mu_+,
\]
where we used
\[
\int_{\R^3_+} (F_{\e r}-J_{\e r})\,\dd\mu_+ =0.
\]
If $J_{\e r}(t)=0$, then \eqref{eq/ J mu} and $F_{\e r}\ge0$ imply $F_{\e r}(t,\cdot)=0$ $\mu_+$-a.e.; thus the same identity holds with the convention in \eqref{eq/ rel ent}. Hence, for almost every $t$,
\bq\label{eq/ DG rel}
\scrE_{\e r}(t) = \int_{\R^3_+} h_{\rm rel} ( F_{\e r}|J_{\e r}  ) \,\dd\mu_+.
\eq

We first apply the biting lemma to the outgoing mass flux. By \eqref{eq/ nice bounds} and \cite[Theorem 2.4]{Mis10}, after passing to a further subsequence there exist $J_r^\ast\in L^1(0,T)$ and an increasing sequence of measurable sets
\[
\calG_k\subset(0,T), \quad |(0,T)\setminus\calG_k|<\frac1k,
\]
such that
\bq\label{eq/ J weak}
J_{\e r} \weakto J_r^\ast \quad\text{weakly in }L^1(\calG_k) \quad \text{for every }k\ge1.
\eq
Since $\mu_+$ is a probability measure, \eqref{eq/ J weak} also implies
\bq\label{eq/ J prod}
J_{\e r} \weakto J_r^\ast \quad\text{weakly in } L^1(\calG_k\times\R^3_+,\dt\,\dd\mu_+).
\eq
Indeed, for $\psi\in L^\infty(\calG_k\times\R^3_+)$,
\[
\iint_{\calG_k\times\R^3_+} J_{\e r}(t)\psi(t,v) \,\dt\,\dd\mu_+ = \int_{\calG_k} J_{\e r}(t) \lt( \int_{\R^3_+}\psi(t,v)\,\dd\mu_+ \rt)\dt.
\]
Consequently, $(J_{\e r})_\e$ is uniformly integrable in the product space appearing in \eqref{eq/ J prod}.

We next use the Darroz\`es--Guiraud information to obtain the corresponding uniform integrability of $F_{\e r}$.

\begin{lemma}
For every $L>0$,
\bq\label{eq/ young ineq}
F_{\e r} \le \frac1L h_{\rm rel} ( F_{\e r}|J_{\e r}  ) + \frac{e^L-1}{L} J_{\e r} \quad \text{a.e.}
\eq
\end{lemma}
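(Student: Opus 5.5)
This is a pointwise inequality between nonnegative numbers, so I will prove it in the form: for $a=J_{\e r}(t)\ge0$, $b=F_{\e r}(t,v)\ge0$ and $L>0$,
\[
b\le \frac1L h_{\rm rel}(b|a)+\frac{e^L-1}{L}a .
\]
I will first dispose of the degenerate cases allowed by the convention \eqref{eq/ rel ent}. If $a=0<b$, the right-hand side is $+\infty$. If $a=b=0$, both sides vanish. So only $a>0$ remains.

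For $a>0$, I use the Fenchel--Young inequality for the convex function $\phi_a(b):=b\log\frac ba-b+a=h_{\rm rel}(b|a)$. Its Legendre transform is
\[
\phi_a^*(s)=\sup_{b\ge0}\bigl(sb-\phi_a(b)\bigr)=a(e^s-1),
\]
since the supremum is attained at $b=ae^s$. Taking $s=L$ gives $Lb\le \phi_a(b)+a(e^L-1)$. Dividing by $L$ yields the claim.

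Equivalently, one can check directly that $g(b):=h_{\rm rel}(b|a)+(e^L-1)a-Lb$ is convex in $b$, with minimum at $b=ae^L$ and minimum value
\[
ae^L L-ae^L+a+(e^L-1)a-Lae^L=0 .
\]
Hence $g\ge0$. Applying this at a.e.\ $(t,v)$, with the representation \eqref{eq/ DG rel} in mind, gives \eqref{eq/ young ineq}.

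There is no real obstacle here. The only point requiring care is the vacuum convention $J_{\e r}(t)=0$, which is handled above, and the fact that $F_{\e r}=0$ $\mu_+$-a.e.\ on that set makes the inequality trivially true there in any case.
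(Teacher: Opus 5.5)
Your proof is correct and is essentially the paper's argument. The paper applies the Fenchel--Young inequality $Lz\le \Gamma(z)+e^L-1$ for $\Gamma(z)=z\log z-z+1$ with $z=F_{\e r}/J_{\e r}$ and multiplies by $J_{\e r}$, which is your conjugate computation $\phi_a^*(s)=a(e^s-1)$ for the perspective $\phi_a=a\,\Gamma(\cdot/a)$; the case $J_{\e r}=0$ is handled the same way in both (there $F_{\e r}=0$ a.e., and your $+\infty$ convention covers $a=0<b$ regardless).
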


\begin{proof}
Let $\Gamma(z):=z\log z-z+1$, $z\ge0$. Its Fenchel conjugate is $\Gamma^\ast(L)=e^L-1$, and thus
\[
Lz\le\Gamma(z)+e^L-1 \quad (L>0,\ z\ge0).
\]
When $J_{\e r}>0$, take $z=\frac{F_{\e r}}{J_{\e r}}$ and multiply by $J_{\e r}$. Since
\[
J_{\e r} \Gamma\lt( \frac{F_{\e r}}{J_{\e r}} \rt) = h_{\rm rel} ( F_{\e r}|J_{\e r}  ),
\]
we obtain \eqref{eq/ young ineq}. When $J_{\e r}=0$, we have $F_{\e r}=0$ $\mu_+$-a.e., and the inequality is trivial. This completes the proof.
\end{proof}

\begin{lemma}\label{lem/ weak for k}
After passing to a further subsequence, there exists
\[
F_r \in L^1((0,T)\times\R^3_+,\dt\,\dd\mu_+)
\]
such that, for every $k\ge1$,
\[
F_{\e r} \weakto F_r \quad\text{weakly in } L^1(\calG_k\times\R^3_+,\dt\,\dd\mu_+).
\]

\end{lemma}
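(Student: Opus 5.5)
The plan is to use the pointwise inequality \eqref{eq/ young ineq} to show that $(F_{\e r})_\e$ is uniformly integrable on each product set $\calG_k\times\R^3_+$ with respect to $\dt\,\dd\mu_+$. Dunford--Pettis then gives weak compactness on each $\calG_k$, and a diagonal extraction in $k$ produces a single subsequence and a single limit. Integrability of $F_r$ on all of $(0,T)\times\R^3_+$ will come from weak lower semicontinuity together with the uniform flux bound in \eqref{eq/ nice bounds}.

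\textbf{Uniform integrability on $\calG_k\times\R^3_+$.} Fix $k$ and a measurable set $A\subset\calG_k\times\R^3_+$. Integrating \eqref{eq/ young ineq} over $A$ and using \eqref{eq/ DG rel} together with $h_{\rm rel}\ge0$, we get
\[
\iint_A F_{\e r}\,\dt\,\dd\mu_+ \le \frac1L\int_0^T\scrE_{\e r}(t)\,\dt + \frac{e^L-1}{L}\iint_A J_{\e r}\,\dt\,\dd\mu_+ \le \frac{C_T}{L} + \frac{e^L-1}{L}\iint_A J_{\e r}\,\dt\,\dd\mu_+,
\]
where the last step uses \eqref{eq/ nice bounds}. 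By \eqref{eq/ J prod}, the family $(J_{\e r})_\e$ is uniformly integrable on $\calG_k\times\R^3_+$. So we first choose $L$ large and then take $\dt\,\dd\mu_+(A)$ small, which makes the left-hand side small uniformly in $\e$. Since $\mu_+$ is a probability measure and $\calG_k$ has finite measure, the product measure is finite. Taking $A$ to be the whole product set also gives a uniform $L^1$ bound, so no separate tightness argument is needed. The Dunford--Pettis theorem then gives weak relative compactness in $L^1(\calG_k\times\R^3_+,\dt\,\dd\mu_+)$.

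\textbf{Diagonal extraction and consistency.} For $k=1$ we extract a subsequence converging weakly to some $F_r^{(1)}$. For each $k\ge2$ we extract further from the previous subsequence, getting a limit $F_r^{(k)}$. Since the sets $\calG_k$ increase, the uniqueness of weak limits forces $F_r^{(k+1)}=F_r^{(k)}$ a.e. on $\calG_k\times\R^3_+$. Because $|(0,T)\setminus\bigcup_k\calG_k|=0$, these limits glue into a single measurable function $F_r\ge0$ defined a.e. on $(0,T)\times\R^3_+$. The diagonal subsequence converges weakly to $F_r$ on every $\calG_k$.

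\textbf{Global integrability of $F_r$.} By weak lower semicontinuity of the norm (test against $\psi\equiv1$), for each $k$,
\[
\iint_{\calG_k\times\R^3_+}F_r\,\dt\,\dd\mu_+\le\liminf_{\e\to0}\int_0^T J_{\e r}\,\dt\le C_T.
\]
Monotone convergence as $k\to\infty$ then gives $F_r\in L^1((0,T)\times\R^3_+,\dt\,\dd\mu_+)$.

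The only genuinely delicate point is that the uniform integrability of $F_{\e r}$ is inherited from that of $J_{\e r}$, and the latter holds only on the biting sets $\calG_k$. This is why the lemma is stated on each $\calG_k$ rather than on all of $(0,T)$.
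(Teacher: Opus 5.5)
Your proposal is correct and follows essentially the same route as the paper. You integrate the Fenchel--Young bound \eqref{eq/ young ineq} over $A\subset\calG_k\times\R^3_+$ and combine the Darroz\`es--Guiraud bound with the biting uniform integrability of $J_{\e r}$ to get uniform integrability of $F_{\e r}$. You then apply Dunford--Pettis with a diagonal extraction over the increasing sets $\calG_k$, and conclude $F_r\in L^1$ by monotone convergence from the flux bound. The only cosmetic difference is that the paper expresses the uniform integrability of $J_{\e r}$ through an explicit modulus $\omega_k$.
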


\begin{proof}
Fix $k$. By \eqref{eq/ J prod}, the family $(J_{\e r})_\e$ is uniformly integrable in $L^1(\calG_k\times\R^3_+,\dt\,\dd\mu_+)$. Hence there exists a modulus $\omega_k$ with $\omega_k(s)\to0$ as $s\to0$ such that
\[
\sup_{\e>0} \iint_A J_{\e r}(t) \,\dt\,\dd\mu_+ \le \omega_k(|A|)
\]
for every measurable $A\subset\calG_k\times\R^3_+$, where
\[
|A| := \iint_A \dt\,\dd\mu_+.
\]
Integrating \eqref{eq/ young ineq} over $A$ and using \eqref{eq/ DG rel} and \eqref{eq/ nice bounds}, we obtain
\[
\sup_{\e>0} \iint_A F_{\e r}\,\dt\,\dd\mu_+ \le \frac{C_T}{L} + \frac{e^L-1}{L} \omega_k(|A|).
\]
We first choose $L$ large and then $|A|$ small. This proves uniform integrability of $(F_{\e r})_\e$ on $\calG_k\times\R^3_+$.

The Dunford--Pettis theorem gives weak compactness for each $k$. Using that the sets $\calG_k$ are increasing, a diagonal extraction gives a single subsequence for which the weak limits on the different $\calG_k$ agree on their intersections. We denote the resulting measurable function by $F_r$.

Finally,
\[
\iint_{\calG_k\times\R^3_+} F_r\,\dt\,\dd\mu_+ = \lim_{\e\to0} \iint_{\calG_k\times\R^3_+} F_{\e r}\,\dt\,\dd\mu_+ = \lim_{\e\to0} \int_{\calG_k} J_{\e r}(t)\,\dt \le C_T.
\]
Letting $k\to\infty$ and using monotone convergence gives
\[
F_r \in L^1((0,T)\times\R^3_+,\dt\,\dd\mu_+).
\]
This completes the proof.
\end{proof}

Define the outgoing limiting trace at $x=\frac12$ by
\[
f_r(t,v) := \frkM(v)F_r(t,v), \quad v_1>0.
\]
Lemma \ref{lem/ weak for k} is then equivalent to
\bq\label{eq/ out wk}
f_{\e r} \weakto f_r \quad\text{weakly in } L^1(\calG_k\times\R^3_+, v_1\,\dt\dv)
\eq
for every $k$.

We next identify the limit of the outgoing flux. Define
\[
J_r(t) := \int_{v_1>0}f_r(t,v)v_1\,\dv.
\]

\begin{lemma}\label{lem/ identify j}
The limit in \eqref{eq/ J weak} satisfies
\[
J_r^\ast=J_r \quad\text{a.e. in }(0,T).
\]
\end{lemma}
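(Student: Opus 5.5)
Since $J_{\e r}(t)=\int_{\R^3_+}F_{\e r}(t,v)\,\dd\mu_+(v)$ by \eqref{eq/ J mu}, the plan is to test the weak convergence of Lemma \ref{lem/ weak for k} against functions depending only on time. Fix $k\ge1$ and $\psi\in L^\infty(\calG_k)$. Then $\psi(t)\one_{\R^3_+}(v)$ belongs to $L^\infty(\calG_k\times\R^3_+,\dt\,\dd\mu_+)$, and we may use it as a test function in Lemma \ref{lem/ weak for k}. Fubini's theorem gives
\[
\int_{\calG_k}\psi(t)J_{\e r}(t)\,\dt=\iint_{\calG_k\times\R^3_+}\psi(t)F_{\e r}(t,v)\,\dt\,\dd\mu_+\longrightarrow\iint_{\calG_k\times\R^3_+}\psi(t)F_r(t,v)\,\dt\,\dd\mu_+=\int_{\calG_k}\psi(t)J_r(t)\,\dt .
\]
The last identity uses $F_r\,\dd\mu_+=f_r v_1\,\dv$ and Fubini. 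This is legitimate because $F_r\in L^1((0,T)\times\R^3_+,\dt\,\dd\mu_+)$, so $J_r\in L^1(0,T)$ and $J_r$ is finite almost everywhere.

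The left-hand side also converges to $\int_{\calG_k}\psi J_r^\ast\,\dt$ by \eqref{eq/ J weak}. Both $J_r$ and $J_r^\ast$ lie in $L^1(\calG_k)$, and the identity $\int_{\calG_k}\psi(J_r-J_r^\ast)\,\dt=0$ holds for all bounded $\psi$. Choosing $\psi=\operatorname{sgn}(J_r-J_r^\ast)$ gives $J_r=J_r^\ast$ a.e. on $\calG_k$.

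Finally, the sets $\calG_k$ increase and $|(0,T)\setminus\calG_k|<\frac1k$. Hence $\bigcup_k\calG_k$ has full measure in $(0,T)$, and the equality holds a.e. on $(0,T)$.

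There is no real obstacle here. The only point to check is that constant-in-$v$ test functions are admissible. This holds because $\mu_+$ is a finite (probability) measure, so no velocity tightness is needed beyond what Lemma \ref{lem/ weak for k} already gives. The potential subtlety of mass escaping to large velocities, which could make $J_r^\ast>J_r$, is therefore excluded by the uniform integrability in the product space $\dt\,\dd\mu_+$ that was used to prove Lemma \ref{lem/ weak for k}.
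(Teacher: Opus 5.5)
Your proposal is correct and follows essentially the same route as the paper: test against time-only functions $\psi\in L^\infty(\calG_k)$, use the weak convergence of the outgoing traces (you use the equivalent form in Lemma \ref{lem/ weak for k} with $\dd\mu_+$ instead of \eqref{eq/ out wk}) together with \eqref{eq/ J weak}, conclude $J_r^\ast=J_r$ a.e. on each $\calG_k$, and then exhaust $(0,T)$ up to a null set.
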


\begin{proof}
Let $\phi\in L^\infty(\calG_k)$. Then
\[
\int_{\calG_k} J_{\e r}(t)\phi(t)\,\dt = \iint_{\calG_k\times\R^3_+} f_{\e r}(t,v)\phi(t)v_1\,\dt\dv.
\]
Passing to the limit using \eqref{eq/ J weak} and \eqref{eq/ out wk}, we obtain
\[
\int_{\calG_k} J_r^\ast(t)\phi(t)\,\dt = \int_{\calG_k} J_r(t)\phi(t)\,\dt.
\]
Thus $J_r^\ast=J_r$ almost everywhere on $\calG_k$. Since $\calG_k\uparrow(0,T)$ up to a null set, the identity holds almost everywhere on $(0,T)$. This completes the proof.
\end{proof}

We now pass to the limit in the Maxwell boundary condition. For $v_1<0$,
\[
f_{\e r}(t,v) = (1-\alpha_r) f_{\e r}(t,\scrV v) + \alpha_r\frkM(v)J_{\e r}(t).
\]
By \eqref{eq/ out wk},
\[
f_{\e r}(t,\scrV v) \weakto f_r(t,\scrV v) \quad \text{weakly in } L^1(\calG_k\times\R^3_-, |v_1|\,\dt\dv).
\]
Moreover, \eqref{eq/ J weak} and Lemma \ref{lem/ identify j} give
\[
\frkM(v)J_{\e r}(t) \weakto \frkM(v)J_r(t)
\]
in the same space. Hence
\[
f_{\e r} \weakto (1-\alpha_r)f_r(t,\scrV v) + \alpha_r\frkM(v)J_r(t)
\]
weakly in $L^1(\calG_k\times\R^3_-, |v_1|\,\dt\dv)$.

We thus extend $f_r$ to the incoming velocities by
\bq\label{eq/ max r}
f_r(t,v) := (1-\alpha_r)f_r(t,\scrV v) + \alpha_r\frkM(v)J_r(t), \quad v_1<0.
\eq
With this definition,
\bq\label{eq/ r weak}
f_{\e r} \weakto f_r \quad\text{weakly in } L^1(\calG_k\times\R^3, |v_1|\,\dt\dv)
\eq
for every $k$, and
\[
f_r\in L^1((0,T)\times\R^3, |v_1|\,\dt\dv).
\]

Repeating the same argument at $x=-\frac12$, and passing to a further subsequence if necessary, we obtain a trace $f_l\in L^1((0,T)\times\R^3, |v_1|\,\dt\dv)$ satisfying
\bq\label{eq/ max l}
f_l(t,v) = (1-\alpha_l)f_l(t,\scrV v) + \alpha_l\frkM(v)J_l(t), \quad v_1>0,
\eq
where
\[
J_l(t) := \int_{v_1<0}f_l(t,v)|v_1|\,\dv.
\]
By replacing the good sets obtained at the two endpoints by their intersections and relabeling them, we may assume that the same increasing sequence $(\calG_k)$ satisfies
\[
\begin{aligned}
f_{\e l} &\weakto f_l &&\text{weakly in } L^1(\calG_k\times\R^3, |v_1|\,\dt\dv),\\
f_{\e r} &\weakto f_r &&\text{weakly in } L^1(\calG_k\times\R^3, |v_1|\,\dt\dv)
\end{aligned}
\]
for every $k$, while
\[
|(0,T)\setminus\calG_k|\longrightarrow 0.
\]

%
%
%
%
%
%
%
%
%
%
%

\subsection{Passage to the limit at the boundary}

We now return to the renormalized equations considered in Section \ref{sec/ inflow limit}. Recall that a decreasing sequence $(\delta_j)$ with $\delta_j\downarrow0$ and a common $\e$-subsequence were fixed there. All quantities indexed by $\delta$ below are understood for $\delta\in\{\delta_j:j\ge1\}$.

After a further diagonal extraction in $\e$, preserving all the interior limits from Sections \ref{subsec/ int lim} and \ref{subsec/ delta limit}, we may assume that, for every such $\delta$,
\bq\label{eq/ max tr}
\beta_\delta(f_{\e l}) \weakstarto \frkl_\delta,\quad  \beta_\delta(f_{\e r}) \weakstarto \frkr_\delta \quad \text{in }L^\infty((0,T)\times\R^3).
\eq
This is possible since $0\le\beta_\delta(f_{\e l}), \beta_\delta(f_{\e r}) \le\frac1\delta$. 

Let $\varphi\in \calD([0,T)\times\ov{\calO})$. Since $v_1\varphi(\cdot,\pm\frac12,\cdot)$ belongs to $L^1((0,T)\times\R^3)$, \eqref{eq/ max tr} gives
\begin{align*}
&\int_0^T\int_{\R^3} \beta_\delta(f_{\e r}) \varphi(t,\frac12,v)v_1\,\dv\dt \longrightarrow  \int_0^T\int_{\R^3} \frkr_\delta \varphi(t,\frac12,v)v_1\,\dv\dt, \\
&\int_0^T\int_{\R^3} \beta_\delta(f_{\e l}) \varphi(t,-\frac12,v)v_1\,\dv\dt \longrightarrow  \int_0^T\int_{\R^3} \frkl_\delta \varphi(t,-\frac12,v)v_1\,\dv\dt.
\end{align*}

Together with the interior convergence established in Section \ref{subsec/ int lim}, these limits show that $\ov{\beta}_\delta$ satisfies
\[
\p_t\ov{\beta}_\delta +v_1\p_x\ov{\beta}_\delta = \rho_f ( \frka_\delta M[f]-\frkb_\delta  )
\]
in the Green formulation up to the spatial boundary, with traces
\[
\ov{\beta}_\delta|_{x=-\frac12} = \frkl_\delta, \quad \ov{\beta}_\delta|_{x=\frac12} = \frkr_\delta.
\]

Set, as in Section \ref{subsec/ delta limit},
\[
\gamma(z):=\log(1+z).
\]
The trace-renormalization property for transport equations \cite[Theorem 4.5]{Mis10} gives the Green formulation
\bq\label{eq/ max beta twice bdy}
\p_t\gamma(\ov{\beta}_\delta) +v_1\p_x\gamma(\ov{\beta}_\delta) = \frkQ_\delta
\eq
up to the spatial boundary, with traces
\[
\gamma(\ov{\beta}_\delta)|_{x=-\frac12} = \gamma(\frkl_\delta), \quad \gamma(\ov{\beta}_\delta)|_{x=\frac12} = \gamma(\frkr_\delta).
\]

We next identify the limits of these boundary traces as $\delta\to0$.

\begin{lemma}\label{lem/ gk str}
For every $k\ge1$,
\begin{align}
\frkr_\delta &\nearrow f_r \quad \text{strongly in } L^1(\calG_k\times\R^3, |v_1|\,\dt\dv), \label{eq/ rdelta max str}\\
\frkl_\delta &\nearrow f_l \quad \text{strongly in } L^1(\calG_k\times\R^3, |v_1|\,\dt\dv) \nonumber
\end{align}
as $\delta\to0$ along $(\delta_j)$.
\end{lemma}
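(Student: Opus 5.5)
The plan mirrors the proof of Lemma \ref{lem/ in tr lim}: reduce to Lemma \ref{lem/ n delta}, this time on the space $X_k:=\calG_k\times\R^3$ with measure $|v_1|\,\dt\dv$. I treat $x=\frac12$; the left endpoint is identical. The two inputs needed are a weak convergence for each fixed $\delta$ and an error bound that is uniform in $\e$.

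First I identify $\frkr_\delta$ as a weak $L^1(X_k)$ limit. Since $0\le\beta_\delta(f_{\e r})\le f_{\e r}$, the family $(f_{\e r})_\e$ is uniformly integrable on $X_k$, being weakly convergent by \eqref{eq/ r weak}. Hence $(\beta_\delta(f_{\e r}))_\e$ is uniformly integrable and weakly relatively compact in $L^1(X_k)$. Any weak limit coincides with $\frkr_\delta$, by testing against bounded functions supported where $|v_1|$ is bounded away from $0$ and $\infty$ and using \eqref{eq/ max tr}. So the whole subsequence converges weakly in $L^1(X_k)$ to $\frkr_\delta$.

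The second input is the uniform estimate
\[
\lim_{\delta\to0}\sup_{\e>0}\iint_{X_k}(f_{\e r}-\beta_\delta(f_{\e r}))|v_1|\,\dt\dv=0 .
\]
Unlike the inflow case, the Maxwell setting gives no entropy bound on the traces, so I would use uniform integrability of $(f_{\e r})$ on $X_k$ in its place. For $L>1$, the inflow computation gives
\[
0\le f_{\e r}-\beta_\delta(f_{\e r})\le \delta L f_{\e r}+f_{\e r}\one_{\{f_{\e r}>L\}} .
\]
The first term integrates to at most $\delta L\sup_\e\|f_{\e r}\|_{L^1(X_k)}$. For the second, the sets $\{f_{\e r}>L\}$ have $|v_1|\,\dt\dv$-measure at most $C/L$ by Chebyshev. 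By uniform integrability, $\sup_\e\iint_{\{f_{\e r}>L\}}f_{\e r}|v_1|\to0$ as $L\to\infty$, because the measure space $X_k$ is $\sigma$-finite and tightness also holds. Tightness comes from the energy tail bound \eqref{eq/ bdy tail} on the outgoing part, and on the incoming part from the boundary law together with \eqref{eq/ J weak} on $\calG_k$. Choosing $L$ large and then $\delta$ small gives the uniform estimate, and Lemma \ref{lem/ n delta} with $g^\e=f_{\e r}$ and $\eta_\delta^\e=\beta_\delta(f_{\e r})$ yields $\frkr_\delta\to f_r$ strongly in $L^1(X_k)$.

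Monotonicity follows as before: $\beta_{\delta_1}\ge\beta_{\delta_2}$ for $\delta_1<\delta_2$, and weak limits preserve order. The main obstacle is that the uniform integrability in the second step only holds on the good sets $\calG_k$ from the biting lemma, which is why the statement is localized to $\calG_k$. On the incoming half one must avoid invoking weak compactness of $J_{\e r}$ on all of $(0,T)$. I would handle this by writing $f_{\e r}$ for $v_1<0$ via the Maxwell law. This bounds its contribution by the reflected outgoing part plus $\frkM J_{\e r}$, and $J_{\e r}$ is uniformly integrable on $\calG_k$ by \eqref{eq/ J weak}.
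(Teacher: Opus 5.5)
Your proposal is correct and follows the paper's proof essentially step for step. You identify $\frkr_\delta$ as the weak $L^1(\calG_k\times\R^3,|v_1|\,\dt\dv)$ limit of $\beta_\delta(f_{\e r})$ via \eqref{eq/ r weak}, use $f_{\e r}-\beta_\delta(f_{\e r})\le\delta Lf_{\e r}+f_{\e r}\one_{\{f_{\e r}>L\}}$ with uniform integrability replacing the missing entropy bound, and conclude by Lemma \ref{lem/ n delta} plus monotonicity. Your extra tightness and Maxwell-law discussion is harmless but unnecessary, since uniform integrability on $\calG_k\times\R^3$ is already contained in \eqref{eq/ r weak}, and Chebyshev together with equi-integrability controls the level sets $\{f_{\e r}>L\}$.
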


\begin{proof}
We give the proof at the right endpoint. By \eqref{eq/ r weak}, $(f_{\e r})_\e$ is weakly compact in $L^1(\calG_k\times\R^3, |v_1|\,\dt\dv)$. Since $0\le\beta_\delta(f_{\e r})\le f_{\e r}$, the family $(\beta_\delta(f_{\e r}))_\e$ is weakly compact in the same space. Its weak limit is $\frkr_\delta$, by uniqueness of distributional limits and \eqref{eq/ max tr}.

For $L>1$,
\bq\label{eq/ max tr err}
0 \le f_{\e r}-\beta_\delta(f_{\e r}) = \frac{\delta f_{\e r}^2} {1+\delta f_{\e r}} \le \delta Lf_{\e r} + f_{\e r} \one_{\{f_{\e r}>L\}}. 
\eq
The weak compactness of $(f_{\e r})_\e$ in this $L^1$ space implies uniform integrability. Consequently,
\[
\lim_{L\to\infty} \sup_{\e>0} \iint_{\calG_k\times\R^3} f_{\e r} \one_{\{f_{\e r}>L\}} |v_1|\,\dt\dv =0.
\]
Using also the uniform $L^1$ bound, we obtain from \eqref{eq/ max tr err}
\[
\lim_{\delta\to0} \sup_{\e>0} \| \beta_\delta(f_{\e r})-f_{\e r} \|_{L^1(\calG_k\times\R^3, |v_1|\,\dt\dv)} =0.
\]
Lemma \ref{lem/ n delta} then gives the convergence stated in \eqref{eq/ rdelta max str}.

If $0<\delta_1<\delta_2$, then $\beta_{\delta_1}(f_{\e r}) \ge \beta_{\delta_2}(f_{\e r})$. Passing to the weak limit gives $\frkr_{\delta_1} \ge \frkr_{\delta_2}$ almost everywhere. Hence the convergence is monotone. The proof at the left endpoint is identical. This completes the proof.
\end{proof}

Since the convergence in Lemma \ref{lem/ gk str} is monotone and the sets $\calG_k$ cover $(0,T)$ up to a null set, we have
\[
\frkr_\delta\nearrow f_r, \quad \frkl_\delta\nearrow f_l \quad\text{a.e. on }(0,T)\times\R^3.
\]
In particular,
\[
0\le\frkr_\delta\le f_r, \quad 0\le\frkl_\delta\le f_l.
\]
Since $\gamma$ is increasing and $0\le\gamma(z)\le z$, the dominated convergence theorem gives
\begin{align}
\gamma(\frkr_\delta) &\longrightarrow  \gamma(f_r) \quad \text{strongly in } L^1((0,T)\times\R^3, |v_1|\,\dt\dv), \label{eq/ log r str}\\
\gamma(\frkl_\delta) &\longrightarrow  \gamma(f_l) \quad \text{strongly in } L^1((0,T)\times\R^3, |v_1|\,\dt\dv). \label{eq/ log l str}
\end{align}

We may now pass to the limit $\delta\to0$ in the Green formula associated with \eqref{eq/ max beta twice bdy}. The bulk convergence and the convergence of the collision terms are given by Lemma \ref{lem/ delta lim} and Proposition \ref{prop/ collision delta}, while \eqref{eq/ log r str}--\eqref{eq/ log l str} handle the spatial boundary terms. We obtain
\[
\p_t\gamma(f) +v_1\p_x\gamma(f) = \gamma'(f)\rho_f(M[f]-f)
\]
in the Green formulation, with spatial traces
\[
\gamma(f)|_{x=-\frac12} = \gamma(f_l), \quad \gamma(f)|_{x=\frac12} = \gamma(f_r).
\]
The limiting traces satisfy the Maxwell boundary conditions \eqref{eq/ max l} and \eqref{eq/ max r} almost everywhere. Thus the Maxwell boundary condition is realized by the limiting solution.

Finally, the last renormalization argument of Section \ref{sec/ inflow limit} applies without change. Indeed, for $\beta\in\frkB$ and $\Gamma(s):=\beta(e^s-1)$, we have
\[
|\Gamma'(s)|\le C_\beta, \quad |\Gamma(s)|\le C_\beta s.
\]
Applying the renormalization theorem first to bounded-derivative truncations of $\Gamma$ and then passing to the truncation limit yields
\[
\p_t\beta(f) +v_1\p_x\beta(f) = \beta'(f)\rho_f(M[f]-f)
\]
with traces $\beta(f_l)$ and $\beta(f_r)$. Consequently, $f$ satisfies the Green formula \eqref{eq/ green} for every $\beta\in\frkB$ and is a renormalized solution of the BGK equation with the Maxwell boundary condition.

The recovery of the terminal temporal trace and the strong continuity $f\in  C([0,T];L^1(\calO,\la v \ra^2 \dx\dv))$ proceeds exactly as in Section \ref{sec/ inflow limit}, since that argument depends only on the interior renormalized equation and the uniform bulk estimates. A diagonal argument in $T$ completes the construction on $[0,\infty)$.

%
%
%
%
%
%
%
%
%
%
%

\subsection{Entropy inequalities and global balances}

The compactness arguments also preserve the entropy inequalities. We record these properties separately from the renormalized equation, since the functions $z$ and $z\log z$ do not belong to $\frkB$.

\begin{proposition}\label{prop/ bal}
Let $f$ be the limit obtained in Sections \ref{sec/ inflow limit} and \ref{sec/ maxwell limit} from a family of entropy admissible approximate solutions as in Section \ref{sec/ approximate}, and set
\begin{equation}\label{eq/ diss def}
\scrD[f](t):=\iint_{\calO}\rho_f(f-M[f])(\log f-\log M[f])\,\dx\dv.
\end{equation}
The integrand is defined to be zero on $\{\rho_f=0\}$, with the usual lower semicontinuous extension elsewhere. Then $\scrD[f]\in L^1(0,T)$ for every $T>0$.

In case \textbf{(In)}, for every $t\ge0$,
\bq\label{eq/ ent in}
\begin{aligned}
&\scrH[f(t)]+\int_0^t\scrD[f](s)\,\ds+\int_0^t\int_{v_1<0}f_l\log f_l\,|v_1|\,\dv\ds+\int_0^t\int_{v_1>0}f_r\log f_r\,v_1\,\dv\ds \\
&\quad\le\scrH[f_0]+\int_0^t\int_{v_1>0}g_l\log g_l\,v_1\,\dv\ds+\int_0^t\int_{v_1<0}g_r\log g_r\,|v_1|\,\dv\ds.
\end{aligned}
\eq
Moreover, for each $\psi\in\{1,v_1,v_2,v_3,|v|^2\}$,
\bq\label{eq/ bal in}
\iint_{\calO}\psi(v)f(t)\,\dx\dv+\int_0^t\int_{\R^3}\psi(v)v_1(f_r-f_l)\,\dv\ds=\iint_{\calO}\psi(v)f_0\,\dx\dv.
\eq
Here the incoming parts of $f_l,f_r$ are $g_l,g_r$, respectively.

In case \textbf{(Max)}, the total mass is conserved and
\bq\label{eq/ ent max}
\scrH_{\frkM}[f(t)]+\int_0^t\scrD[f](s)\,\ds+\int_0^t(\alpha_l\scrE_l(s)+\alpha_r\scrE_r(s) )\,\ds\le\scrH_{\frkM}[f_0],
\eq
where $\scrH_{\frkM}[f]=\iint_{\calO}f\log\frac{f}{\frkM}\,\dx\dv$ and $\scrE_l,\scrE_r$ are the Darroz\`es--Guiraud terms in Lemma \ref{lem/ every}, evaluated at the limiting traces. In particular, the mass and kinetic energy are bounded uniformly for $t\ge0$ in both cases. The thermal estimate also passes to the limit:
\[
\int_0^T\int_I\rho_f^2\theta_f\,\dx\dt\le C_T.
\]
\end{proposition}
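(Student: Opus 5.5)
The plan is to pass to the limit $\e\to0$ in the $\e$-level inequalities \eqref{eq/ H in} and \eqref{eq/ H max}, and in the mass/momentum/energy identities, using lower semicontinuity of the convex functionals. The work is to identify each limiting quantity.

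\textbf{Entropy production.} First I would show
\[
\int_0^T\scrD[f]\,\dt\le\liminf_{\e\to0}\int_0^T\scrD_\e[f_\e]\,\dt .
\]
The integrand $(a,b,c)\mapsto c\,(a-b)(\log a-\log b)$ is jointly convex in $(a,b)$ and nonnegative. I would write $D_\e[f_\e]$ through the lower semicontinuous convex function $j(a,b)=(a-b)(\log a-\log b)$ multiplied by $\lambda_\e(\rho_{f_\e})$. Since $\lambda_\e(\rho_{f_\e})\to\rho_f$ and $M[f_\e]\to M[f]$ strongly (a.e., Lemma \ref{lem/ max L1}) while $f_\e\weakto f$ weakly, Ioffe's lower semicontinuity theorem gives the claim. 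This theorem requires strong convergence in the variables entering nonconvexly and weak convergence in the convex ones; here $c\ge0$ multiplies a convex function, so it applies after truncating $c$ by $\min\{c,N\}$ and letting $N\to\infty$. This also yields $\scrD[f]\in L^1(0,T)$.

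\textbf{Bulk terms.} For $\scrH[f(t)]$ I would use weak lower semicontinuity of $f\mapsto\iint f\log f$, after splitting off the negative part. That part converges thanks to the energy tail \eqref{eq/ E tail} and \eqref{eq/ H neg}. To reach every $t$ rather than a.e.\ $t$, I would test with nonnegative time weights and then invoke $f\in C([0,T];L^1(\calO,\la v\ra^2))$ together with Fatou.

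\textbf{Boundary terms.} In case \textbf{(In)}, the outgoing traces converge weakly in $L^1$ by \eqref{eq/ fl weak}--\eqref{eq/ fr weak}, so $\int f_l\log f_l|v_1|$ is lower semicontinuous. The negative part is again controlled by the energy tails \eqref{eq/ bdy tail}, and the incoming data terms are fixed. In case \textbf{(Max)}, I would write $\scrE_{\e r}=\int h_{\rm rel}(F_{\e r}|J_{\e r})\,\dd\mu_+$, which is jointly convex in $(F,J)$. On each good set $\calG_k$ we have $F_{\e r}\weakto F_r$ and $J_{\e r}\weakto J_r$ (Lemmas \ref{lem/ weak for k} and \ref{lem/ identify j}), so
\[
\int_{\calG_k}\scrE_r\le\liminf\int_{\calG_k}\scrE_{\e r}.
\]
Letting $k\to\infty$ by monotone convergence then gives the Darroz\`es--Guiraud term. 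The relative entropy $\scrH_{\frkM}$ is handled like $\scrH$, since $\log\frkM$ is quadratic and the energy converges.

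\textbf{Balances and thermal bound.} For \eqref{eq/ bal in}, I would take $\beta$ approaching the identity, or directly pass to the limit in \eqref{eq/ E in} and its analogues for $1$ and $v$. The bulk moments at time $t$ converge by the $C([0,T];L^1(\la v\ra^2))$ property and the uniform tails. The boundary moments converge because of the weak trace convergence and the uniform integrability of $\la v\ra^2 f_{\e l}|v_1|$ from \eqref{eq/ bdy tail}. This is where the de la Vall\'ee Poussin bound is essential: it rules out energy concentrating at the boundary. Mass conservation for \textbf{(Max)} follows the same way, using the limit boundary law, which makes the net flux vanish. For the thermal bound, Fatou's lemma together with the a.e.\ convergences $\lambda_\e(\rho_{f_\e})\rho_{f_\e}\theta_{f_\e}\to\rho_f^2\theta_f$ (valid on $\{\rho_f>0\}$, with the integrand zero elsewhere) and Proposition \ref{prop/ rho2} give the stated estimate.

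The main obstacle is the lower semicontinuity of the entropy production, because $f_\e$ converges only weakly while the integrand is not globally jointly convex with the factor $\lambda_\e$. I would first try the truncated-$c$ Ioffe argument; if that fails, I would fall back on the measure $\nu$ from \eqref{eq/ mu} and prove $F_\nu\ge D[f]$ via a duality representation of $j$ as a supremum of affine functions.
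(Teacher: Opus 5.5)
Your overall architecture matches the paper's. You integrate \eqref{eq/ H in} or \eqref{eq/ H max} against a nonnegative time weight, pass to the limit by convexity, and recover every $t$ from the weighted time continuity of $f$. You treat the Darroz\`es--Guiraud terms by joint convexity of $h_{\rm rel}$ on the biting sets $\calG_k$, followed by monotone convergence in $k$. The thermal bound comes from Fatou's lemma and Lemma \ref{lem/ max L1}.

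The one genuinely different step is the entropy production. You apply Ioffe's theorem with $(\lambda_\e(\rho_{f_\e}),M[f_\e])$ converging a.e.\ and $f_\e\weakto f$. This is legitimate. With the convention $0\cdot\infty=0$, the map $(c,a,b)\mapsto c\,(a-b)(\log a-\log b)$ is nonnegative, jointly lower semicontinuous, and convex in $a$, so the truncation in $c$ is not even needed. The paper avoids this black box. It uses the $1$-homogeneity $d(ca,cb)=c\,d(a,b)$ to move the truncated rate $\min\{\lambda_\e(\rho_{f_\e}),R\}$ inside both arguments, and obtains weak convergence of both arguments from the product convergence theorem. It then needs only joint convexity of $d$ (via Mazur and Fatou) and monotone convergence in $R$. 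Given Ioffe's theorem your route is shorter, while the paper's is self-contained. Your fallback via $F_\nu$ is unnecessary.

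Several steps need tightening.

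\emph{Balance laws.} The option ``take $\beta$ approaching the identity'' does not work. In the limiting equation the source $\psi(v)\rho_f(M[f]-f)$ is not known to be locally integrable: its velocity integrals involve $\rho_f^2$, and only $\rho_f^2\theta_f$ is controlled. That is exactly why $f$ is only a renormalized solution. You must use your second option, the $\e$-level identity, in which the collision term drops out by \eqref{eq/ M mom} and \eqref{eq/ Q wt}.

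\emph{Fixed-time convergence.} Continuity of the limit $f$ does not give $\iint\psi f_\e(t)\to\iint\psi f(t)$ at a fixed $t$. As you already do for the entropy, test with $\zeta(t)$, pass to the limit using \eqref{eq/ avg} and the uniform energy tails, and only afterwards invoke the continuity of $f$.

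\emph{Entropy lower semicontinuity.} The negative part $\iint f_\e\log^-f_\e$ need not converge under merely weak convergence. The correct device is the nonnegative convex integrand $z\log z+\la v\ra^2z+e^{-1-\la v\ra^2}$. You then subtract the linear part, which converges by the weighted weak convergence. The same applies to the outgoing boundary entropies in case \textbf{(In)}.

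\emph{Mass conservation for \textbf{(Max)}.} Use the $\e$-level identity $\iint f_\e(t)=\iint f_{\e0}$ with time weights; there the net flux vanishes identically. Do not route this through the limiting traces, which converge only in the biting sense on $\calG_k$.

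\emph{Uniform-in-time bounds.} You still need to state the uniform-in-time mass and energy bound. In case \textbf{(In)} it follows from \eqref{eq/ bal in} with $\psi=1,|v|^2$, after discarding the outgoing flux. In case \textbf{(Max)} it follows from \eqref{eq/ H coerc}, \eqref{eq/ ent max}, and mass conservation.
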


\begin{proof}
We first pass to the limit in the entropy production. The function
\[
d(a,b):= 
\begin{cases}
(a-b)(\log a-\log b) & a,b>0, \\
+\infty &\text{exactly one of $a,b$ is zero,}\\
0 &a=b=0
\end{cases}
\]
is nonnegative, jointly convex, and lower semicontinuous on $[0,\infty) \times [0,\infty)$. We know from Lemma \ref{lem/ max L1} the almost everywhere convergence of the collision frequencies and Maxwellians, while $f_\e\weakto f$ in $L^1$.

The homogeneity relation $d(ca,cb)=c\,d(a,b)$ for $c \ge 0$ (with the convention $0\cdot\infty = 0$) allows us to rewrite the integrand of \eqref{eq/ diss def} as $d(\rho_f f, \rho_f M[f])$. We then consider the truncations $d(\min\{\rho_f,R\}f, \min\{\rho_f, R\}M[f])$, where $R>0$. From the product convergence theorem, for each $R>0$ we obtain that 
\begin{equation*}
    \Big(\min\{\lambda_\ve(\rho_{f_\ve}) , R\} f_\ve , \min\{\lambda_\ve(\rho_{f_\ve}), R \} M[f_\ve] \Big) \weakto \Big(\min\{\rho_f , R\} f,  \min\{\rho_f, R\} M[f]\Big)  
\end{equation*}
weakly in $[L^1_{\rm loc}([0,T]\times \ov{\calO})]^2$. It is then a standard exercise, using the properties of $d(\cdot,\cdot)$, Fatou and Mazur's lemmas, along with a localization procedure (followed by the monotone convergence theorem), that
\begin{align*}
    &\int_0^t \iint_{\calO} \min\{\rho_f, R\} d(f,M[f]) \,\dx\dv\ds \\
    &=\int_0^t \iint_{\calO} d(\min\{\rho_f , R\} f,  \min\{\rho_f, R\} M[f]) \,\dx\dv\ds  \\
    &\le \liminf_{\ve\to 0} \int_0^t \iint_{\calO} d(\min\{\lambda_\ve(\rho_{f_\ve}) , R\} f_\ve , \min\{\lambda_\ve(\rho_{f_\ve}), R \} M[f_\ve] ) \,\dx\dv\ds \\
    &= \liminf_{\ve\to 0} \int_0^t \iint_{\calO} \min\{\lambda_\ve(\rho_{f_\ve}),R\} d(f_\ve, M[f_\ve]) \,\dx\dv\ds \\
    &\le \liminf_{\ve\to 0} \int_0^t \iint_{\calO} \lambda_\ve(\rho_{f_\ve})  d(f_\ve, M[f_\ve]) \,\dx\dv\ds ,
\end{align*}
where the equalities above are due to homogeneity. The monotone convergence theorem with $R\to\infty$ then yields
\[
\int_0^t\scrD[f](s)\,\ds\le\liminf_{\e\to0}\int_0^t\scrD_\e[f_\e](s)\,\ds.
\]
The same argument applies with nonnegative bounded weights in time.
 
 Fix $T>0$. For \textbf{(In)}, the weak convergence in \eqref{eq/ fl weak}--\eqref{eq/ fr weak}, together with the uniform energy tails, gives
\[
\begin{aligned}
f_{\e l}&\weakto f_l &&\text{in }L^1((0,T)\times\R^3_-,\la v\ra^2|v_1|\,\dt\dv),\\
f_{\e r}&\weakto f_r &&\text{in }L^1((0,T)\times\R^3_+,\la v\ra^2v_1\,\dt\dv).
\end{aligned}
\]
The same truncation argument gives weak convergence of $f_\e$ in $L^1((0,T)\times\calO,\la v\ra^2\,\dt\dx\dv)$. The function $\eta_v(z):=z\log z+\la v\ra^2z+e^{-1-\la v\ra^2}$ is nonnegative and convex in $z$. Applying weak lower semicontinuity to $\eta_v$ and subtracting the convergent linear terms gives the required lower semicontinuity for the bulk and boundary entropies.

For brevity, write
\[
\begin{aligned}
\scrB_{\rm out}(s)&:=\int_{v_1<0}f_l\log f_l\,|v_1|\,\dv+\int_{v_1>0}f_r\log f_r\,v_1\,\dv,\\
\scrB_{\rm in}(s)&:=\int_{v_1>0}g_l\log g_l\,v_1\,\dv+\int_{v_1<0}g_r\log g_r\,|v_1|\,\dv.
\end{aligned}
\]
Let $\chi\in C_c^\infty(0,T)$ be nonnegative and set $\widehat\chi(s):=\int_s^T\chi(t)\,\dt$. Multiply \eqref{eq/ H in} by $\chi(t)$ and integrate over the terminal time $t$. Fubini's theorem, the lower semicontinuity just proved, and the convergence of the initial entropy yield
\[
\int_0^T\chi(t)\scrH[f(t)]\,\dt+\int_0^T\widehat\chi(s)\bigl(\scrD[f](s)+\scrB_{\rm out}(s)\bigr)\,\ds \le\scrH[f_0]\int_0^T\chi(t)\,\dt+\int_0^T\widehat\chi(s)\scrB_{\rm in}(s)\,\ds.
\]
Since $\chi$ is arbitrary, an application of Fubini's theorem shows that \eqref{eq/ ent in} holds for almost every $t\in(0,T)$. For an arbitrary $t\in[0,T)$, choose such times $t_n\to t$. The weighted time continuity established in Section \ref{sec/ inflow limit} gives
\[
\iint_{\calO}\la v\ra^2|f(t_n)-f(t)|\,\dx\dv \longrightarrow0,\quad \scrH[f(t)]\le\liminf_{n\to\infty}\scrH[f(t_n)].
\]
All cumulative time integrals in \eqref{eq/ ent in} are continuous. Hence this inequality holds for every $t\ge0$, since $T$ is arbitrary.

For the balance laws, fix $\psi\in\{1,v_1,v_2,v_3,|v|^2\}$. The Maxwellian moment identities give
\[
\int_{\R^3}\psi(v)\lambda_\e(\rho_{f_\e})(M[f_\e]-f_\e)\,\dv=0.
\]
Use $\zeta(t)\psi(v)\chi_R(v)$ in the approximate Green formula, where $\zeta\in C_c^\infty([0,T))$ and $\chi_R$ is a velocity cutoff. Letting $R\to\infty$ at fixed $\e$ eliminates the collision term. By \eqref{eq/ avg} and the uniform energy tails,
\[
\iint_{\calO}\psi(v)f_\e(t)\,\dx\dv\longrightarrow\iint_{\calO}\psi(v)f(t)\,\dx\dv\quad\text{in }L^1(0,T).
\]
The weighted weak convergence of the traces and the initial convergence then allow us to pass to the limit $\e\to0$, giving
\[
\int_0^T\zeta'(t)\iint_{\calO}\psi(v)f(t)\,\dx\dv\dt+\zeta(0)\iint_{\calO}\psi(v)f_0\,\dx\dv =\int_0^T\zeta(t)\int_{\R^3}\psi(v)v_1(f_r-f_l)\,\dv\dt.
\]
The boundary flux belongs to $L^1(0,T)$. Integrating this distributional identity and using the weighted time continuity of $f$ proves \eqref{eq/ bal in} for every $t\ge0$.

For \textbf{(Max)}, pass to the limit in the approximate mass identity after integration against $\chi$. This gives
\[
\int_0^T\chi(t)\iint_{\calO}f(t)\,\dx\dv\dt=\left(\iint_{\calO}f_0\,\dx\dv\right)\int_0^T\chi(t)\,\dt.
\]
The continuity of $f$ in $L^1(\calO)$ yields mass conservation for every $t\ge0$.

The bulk relative entropy is weakly lower semicontinuous, since
\[
z\log\frac{z}{\frkM(v)}+\frac{\frkM(v)}e\ge0
\]
is convex in $z$. At the right boundary, put $F_r:=\frac{f_r}{\frkM}$ and recall that $\dd\mu_+=\frkM v_1\one_{\{v_1>0\}}\,\dv$. The weak convergence of $F_{\e r}$ and $J_{\e r}$ on $\calG_k$, together with the joint convexity of $h_{\rm rel}$ in \eqref{eq/ DG rel}, gives
\[
\int_{\calG_k}\widehat\chi(s)\scrE_r(s)\,\ds=\int_{\calG_k}\int_{\R^3_+}\widehat\chi(s)h_{\rm rel}(F_r|J_r)\,\dd\mu_+\,\ds \le\liminf_{\e\to0}\int_{\calG_k}\widehat\chi(s)\scrE_{\e r}(s)\,\ds.
\]
The same argument applies at the left boundary. Since the information terms are nonnegative and $\calG_k$ increases to $(0,T)$ up to a null set, monotone convergence yields
\[
\int_0^T\widehat\chi(s)\bigl(\alpha_l\scrE_l(s)+\alpha_r\scrE_r(s)\bigr)\,\ds \le\liminf_{\e\to0}\int_0^T\widehat\chi(s)\bigl(\alpha_l\scrE_{\e l}(s)+\alpha_r\scrE_{\e r}(s)\bigr)\,\ds.
\]
We may now integrate \eqref{eq/ H max} against $\chi(t)$ and pass to the limit. Combining the bulk, collision, and boundary lower semicontinuity estimates gives
\[
\int_0^T\chi(t)\scrH_{\frkM}[f(t)]\,\dt +\int_0^T\widehat\chi(s)\bigl(\scrD[f](s)+\alpha_l\scrE_l(s)+\alpha_r\scrE_r(s)\bigr)\,\ds\le\scrH_{\frkM}[f_0]\int_0^T\chi(t)\,\dt.
\]
The terminal time argument used above proves \eqref{eq/ ent max} for every $t\ge0$.

In case \textbf{(In)}, summing \eqref{eq/ bal in} for $\psi=1$ and $\psi=|v|^2$, and discarding the nonnegative outgoing flux, gives a bound on the total mass and energy that is uniform in time, by \eqref{eq/ in data}. In case \textbf{(Max)}, \eqref{eq/ H coerc} and \eqref{eq/ ent max} give
\[
a\iint_{\calO}|v|^2f(t)\,\dx\dv\le\scrH_{\frkM}[f(t)]+C_{\frkM,a}\le\scrH_{\frkM}[f_0]+C_{\frkM,a},
\]
with $a>0$ chosen as in \eqref{eq/ H coerc}. Together with mass conservation, this proves the asserted global bound.

Finally, Lemma \ref{lem/ max L1} and Fatou's lemma applied to \eqref{eq/ therm reg} give
\[
\int_0^T\int_I\rho_f^2\theta_f\,\dx\dt\le\liminf_{\e\to0}\int_0^T\int_I\lambda_\e(\rho_{f_\e})\rho_{f_\e}\theta_{f_\e}\,\dx\dt\le C_T,
\]
where $\rho_f^2\theta_f$ is defined to be zero on $\{\rho_f=0\}$. This completes the proof.
\end{proof}

\begin{remark}
The Maxwell boundary condition describes exchange of momentum and energy with the boundary; thus these quantities are not generally conserved. The convergence proved here directly yields mass conservation and the relative entropy inequality \eqref{eq/ ent max}. An identity for the full momentum or energy flux at the Maxwell boundary would additionally require ruling out concentration in time of the unrenormalized boundary fluxes; biting weak convergence alone does not justify such an identity. Similarly, a local energy balance would require control of the third velocity moment in the transport flux, which is not assumed here.
\end{remark}

%
%
%
%
%
%
%
%
%
%
%

\appendix
%
%
%
%
%
%
%
%
%
%
%
\section{Existence for the truncated collision frequency}\label{app/ approx}

We supply the approximate solutions used in Section \ref{sec/ tails}. The construction follows the compactness method for BGK equations in \cite{Per89,PD91}, with the transport traces understood as in \cite{Mis00T,Mis10}. The parameter $\e$ is fixed throughout this appendix.

\begin{proposition}\label{prop/ approx}
Let $\e\in(0,1)$ and let $f_{\e0}\ge0$ satisfy
\[
\iint_{\calO}f_{\e0}(\la v\ra^2+|\log f_{\e0}| )\,\dx\dv<\infty.
\]
Under either \textbf{(In)} with \eqref{eq/ in data}, or \textbf{(Max)} with $\alpha_l,\alpha_r>0$, problem \eqref{eq/ regbgk} has a nonnegative global weak solution $f_\e$. For every $T>0$, it belongs to $C([0,T];L^1(\calO,\la v\ra^2\,\dx\dv))$, has the weighted traces in \eqref{eq/ tr wt}, and satisfies the entropy inequality \eqref{eq/ H in} or \eqref{eq/ H max}, respectively. In particular, it has all the properties required in Section \ref{sec/ approximate}.
\end{proposition}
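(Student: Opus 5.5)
We prove Proposition \ref{prop/ approx} by an iteration/compactness scheme at fixed $\e$, in the spirit of \cite{Per89,PD91}. Since $\|\lambda_\e\|_{L^\infty}=\frac1\e$, the relaxation term is a bounded-rate linear operator once the Maxwellian is frozen.

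\emph{Step 1: regularized problem.} We regularize the Maxwellian by setting
\[
M_k[f]:=\one_{\{\rho_f\le k,\ |u_f|\le k,\ k^{-1}\le\theta_f\le k\}}M[f] \quad\text{or a smooth cutoff thereof.}
\]
We also truncate the data: the initial datum and the inflow data are taken bounded, compactly supported in $v$, and small at large velocities. In case \textbf{(Max)} we replace $\alpha$ by a contraction factor $(1-1/k)$ on the reflection operator, which makes the boundary operator strictly contracting in $L^1(|v_1|\dv)$.

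\emph{Step 2: fixed-point construction.} Given $g$ in a ball of $L^\infty(0,T;L^1(\calO,\la v\ra^2))$, we solve the linear problem
\[
\p_t f+v_1\p_x f+\lambda_\e(\rho_g)f=\lambda_\e(\rho_g)M_k[g]
\]
with the regularized boundary condition. Its solution is given explicitly by the mild formulation along characteristics (Appendix \ref{app/ mild}), and the Maxwell case is handled by iterating the boundary operator, which converges because it is strictly contracting.

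\emph{Step 3: existence for the regularized problem.} The map $g\mapsto f$ is well defined. We then argue in one of two ways. One option is that it is a contraction for short time in $L^1$, using the local Lipschitz continuity of $M_k$ on the set where its parameters are controlled together with the bound $\lambda_\e\le\frac1\e$. The other option is that velocity averaging makes it compact, so that Schauder's theorem applies. Global existence for fixed $k$ then follows from the a priori mass and energy bound, which is independent of time steps.

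\emph{Step 4: uniform-in-$k$ estimates.} For the regularized solutions $f_k$ we derive, uniformly in $k$:
\begin{itemize}
\item the mass/energy balance as in Lemma \ref{lem/ j bounds};
\item the entropy inequality \eqref{eq/ H in} or \eqref{eq/ H max}, where the Darroz\`es--Guiraud inequality holds for the penalized boundary by the same convexity computation;
\item the weighted moment estimates.
\end{itemize}

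\emph{Step 5: passage to the limit $k\to\infty$.} This is the expected main obstacle, namely identifying the limit of $M_k[f_k]$; the cutoff errors in $M_k$ must be controlled by these bounds. The key simplification compared with the main theorem is that $\lambda_\e$ is bounded. Hence the collision term is bounded in $L^1$ with weight $\la v\ra^2$ by \eqref{eq/ Q wt}, and the third-moment or tail argument of \cite{Per89} is not needed. Indeed, the de la Vall\'ee Poussin propagation of Section \ref{sec/ tails} applies verbatim with $\lambda_\e\le\rho$. Velocity averaging then gives strong compactness of $\rho,\ j,\ \int|v|^2f$, exactly as in Lemma \ref{lem/ max L1}. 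This yields a.e. convergence of the macroscopic fields, hence of $M_k[f_k]\to M[f]$ strongly in $L^1(\la v\ra^2)$ by Scheff\'e's lemma. Since $\lambda_\e(\rho_{f_k})$ is bounded and converges a.e., the loss term $\lambda_\e(\rho_{f_k})f_k$ passes to the limit weakly, being a product of a bounded a.e.-convergent factor and a weakly convergent one. No renormalization is therefore required, and $f$ is a genuine weak solution.

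\emph{Step 6: traces, boundary condition, entropy inequality, continuity.} The traces are handled as in Sections \ref{sec/ inflow limit}--\ref{sec/ maxwell limit}: directly from \eqref{eq/ out bdd} for \textbf{(In)}, and via the biting argument for \textbf{(Max)}, which also recovers the true Maxwell law as the penalization is removed. The entropy inequality follows by lower semicontinuity as in Proposition \ref{prop/ bal}. Time continuity in $L^1(\la v\ra^2)$ follows from the weak formulation with an $L^1$ source together with the tail bound.
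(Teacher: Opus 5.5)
There is a genuine gap, and it comes from your choice of regularization in Step~1. You cut off only the gain term, so your operator is $\lambda_\e(\rho_f)(\chi_kM[f]-f)$ with $\chi_k\in[0,1]$. On $\{\chi_k<1\}$ this operator no longer annihilates $1,v,|v|^2$. Its entropy contribution is
\[
\int_{\R^3}(1+\log f)\,\lambda_\e(\rho_f)(\chi_kM[f]-f)\,\dv
=-\chi_k\lambda_\e(\rho_f)\int_{\R^3}(f-M[f])(\log f-\log M[f])\,\dv-(1-\chi_k)\lambda_\e(\rho_f)\int_{\R^3}f(1+\log f)\,\dv ,
\]
and the last term has no sign: its integrand is positive wherever $f<e^{-1}$.

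Consequently the claims of your Step~4 fail for $f_k$. Neither \eqref{eq/ H in}/\eqref{eq/ H max} nor the exact moment identities used in Lemma~\ref{lem/ j bounds} hold. Section~\ref{sec/ tails} also cannot be invoked ``verbatim with $\lambda_\e\le\rho$''. It rests on Proposition~\ref{prop/ rho2}, whose proof uses the exact balance laws (through Proposition~\ref{prop/ bony}) and the collision-invariant identity \eqref{eq/ tr Q}. At fixed $\e$ you would instead close the weighted estimate crudely from $\lambda_\e\rho\theta\le\frac1{3\e}\int|v|^2f\,\dv$, which is what the paper does.

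This matters because entropy admissibility with the exact production $D_\e$ is precisely what Lemma~\ref{lem/ every} uses downstream. On your route you would have to carry the error $\int_0^t\iint_{\calO}(1-\chi_k)\lambda_\e(\rho_{f_k})f_k\log^-f_k$, and its analogues in the $\scrH_{\frkM}$ and moment identities, and prove that it vanishes as $k\to\infty$. One way is to combine $\int f\log^-f\,\dv\lesssim(\int\la v\ra^2f\,\dv)^{1-\eta}$, the strong convergence of the energy density, and $\chi_k\to1$ a.e. on $\{\rho_f>0\}$.

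The paper avoids all of this by placing the cutoff in front of the whole operator: $Q_n(F)=a_n[F](M[F]-F)$ with $a_n[F]=\lambda_\e(\rho_F)\chi_n(\rho_F,j_F,E_F)$ and $\chi_n\in C_c^\infty(\calU)$. The collision invariants are then preserved and the production is exactly $a_n(f_n-M[f_n])(\log f_n-\log M[f_n])\ge0$. Because the local weighted mass is bounded on $\operatorname{supp}\chi_n$, the loss term is also globally Lipschitz on $X$. Your Step~3 glosses over this point: $(\lambda_\e(\rho_{g_1})-\lambda_\e(\rho_{g_2}))f$ is not controlled in $L^1(\la v\ra^2)$ by $\|g_1-g_2\|$ without a bound on $\int\la v\ra^2f\,\dv$ uniform in $x$.

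A second, smaller gap is the boundary limit in Step~6. The biting argument of Section~\ref{sec/ maxwell limit} identifies traces only on good sets, and only through the renormalized formulation with bounded $\beta_\delta$. It does not by itself let you pass to the limit in the unrenormalized boundary terms. Your claim that ``no renormalization is required'' and that the limit is a genuine weak solution satisfying the exact Maxwell law therefore needs an additional argument.

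The paper avoids weak trace compactness entirely at fixed $\e$. It lets $F$ solve the linear problem $\p_tF+v_1\p_xF+aF=G$, with $a=\lambda_\e(\rho_f)$, $G=aM[f]$ and the original data. The difference $f_n-F$ then solves the same linear problem with source $H_n-a_n(f_n-F)$, where $H_n\to0$ in $L^1(0,T;X)$. From \eqref{eq/ lin tr} and Gr\"onwall one gets $f_n\to f$ in $C([0,T];X)$, with traces converging strongly in $L^1(\la v\ra^2|v_1|\,\dt\dv)$. This gives the exact boundary condition, time continuity and the entropy inequality directly, and it makes your penalization of the reflection operator unnecessary.
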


\begin{proof}
Write $X=L^1(\calO,\la v\ra^2\,\dx\dv)$. We first recall a linear transport estimate. If $h$ solves $\p_t h+v_1\p_xh=q$ with zero inflow or with \textbf{(Max)}, then
\bq\label{eq/ lin tr}
\sup_{0\le t\le T}\|h(t)\|_X+\sum_{b\in\{l,r\}}\int_0^T\int_{\R^3}\la v\ra^2|h_b|\,|v_1|\,\dv\dt\le C_T\lt(\|h(0)\|_X+\int_0^T\|q(t)\|_X\,\dt\rt).
\eq
For zero inflow this follows from the characteristic formula. For \textbf{(Max)}, the positive linear solution is constructed by iterating the incoming trace, starting with zero incoming trace. The required bound follows by testing with
\[
W(x,v)=\la v\ra^2+c_{\frkM}\lt(\frac12+x\,\operatorname{sgn}v_1\rt),\quad c_{\frkM}=\int_{v_1>0}|v|^2\frkM(v)v_1\,\dv.
\]
Indeed, $W$ is equivalent to $\la v\ra^2$, $|v_1\p_xW|\le c_{\frkM}W$, and the outgoing weight minus the reflected average of the incoming weight is $\alpha_b|v|^2+(1-\alpha_b)c_{\frkM}\ge0$ at boundary $b$. This gives the bulk bound by Gr\"onwall's inequality. The tests $xv_1$ and $\la v\ra^2$, as in Lemma \ref{lem/ j bounds}, control the boundary fluxes; the additional source terms are bounded by $C\int_0^T\|q\|_X\,\dt$. These bounds justify the monotone construction. Positivity gives \eqref{eq/ lin tr} for signed data by comparison. The characteristic formula and approximation of the data give continuity in $X$. Prescribed inflow is included through its free transport solution.

For $F\ge0$, write $j_F=\int_{\R^3}vF\,\dv$ and $E_F=\int_{\R^3}|v|^2F\,\dv$, and set
\[
\calU=\lt\{(r,j,E)\in(0,\infty)\times\R^3\times\R:E>\frac{|j|^2}{r}\rt\}.
\]
Choose $\chi_n\in C_c^\infty(\calU)$ with $0\le\chi_n\le1$, equal to one on each compact subset of $\calU$ for all sufficiently large $n$. Extend it by zero outside $\calU$, and define
\[
a_n[F]=\lambda_\e(\rho_F)\chi_n(\rho_F,j_F,E_F),\quad Q_n(F)=a_n[F](M[F]-F).
\]
The moment vector belongs to $\calU$ wherever $\rho_F>0$, since a nonzero integrable velocity density has positive temperature. On the support of $\chi_n$, the density and temperature are bounded away from zero and all macroscopic parameters are bounded. Smooth dependence of the Gaussian on these parameters shows that $Q_n$ is globally Lipschitz on the nonnegative cone of $X$; for the loss term, use also the bound on the local weighted mass $\rho_F+E_F$ on this support. Moreover, $Q_n(F)+\frac1\e F\ge0$. The Duhamel fixed point argument, using \eqref{eq/ lin tr}, thus gives a global nonnegative solution $f_n\in C([0,T];X)$ of
\[
\p_t f_n+v_1\p_xf_n=Q_n(f_n),\quad f_n(0)=f_{\e0},
\]
with the prescribed boundary condition. We write $a_n=a_n[f_n]$.

The cutoff is independent of $v$ and multiplies the entire collision operator. Consequently, its moments against $1,v,|v|^2$ vanish, and its entropy production is $a_n(f_n-M[f_n])(\log f_n-\log M[f_n])\ge0$. Approximation of the initial and incoming data, followed by convex regularizations of $z\log z$ in the Green formula, justifies the entropy inequalities computed in Lemma \ref{lem/ every}. That proof and the multiplier argument of Lemma \ref{lem/ j bounds} give the bulk energy, absolute entropy, and boundary bounds uniformly in $n$.

Choose $\Phi$ by Lemma \ref{lem/ dlVP} and Corollary \ref{cor/ in Phi}, applied to the fixed initial datum and, in case \textbf{(In)}, the incoming data. The algebraic estimate \eqref{eq/ Phi Q} gives
\[
\int_0^T\iint_{\calO}\Phi(\la v\ra^2)Q_n(f_n)\,\dx\dv\dt\le C\int_0^T\int_I a_n\rho_{f_n}\theta_{f_n}\,\dx\dt\le\frac{C}{3\e}\int_0^T\iint_{\calO}|v|^2f_n\,\dx\dv\dt.
\]
The boundary calculation and truncation in Proposition \ref{prop/ super} yield its weighted energy estimate uniformly in $n$, with a constant $C_{\e,T}$. Thus this construction uses only the bounded rate to propagate the energy weight.

Extract $f_n\weakto f$ in $L^1((0,T)\times\calO)$. Since
\[
\int_0^T\|Q_n(f_n)\|_X\,\dt\le\frac2\e\int_0^T\|f_n\|_X\,\dt,
\]
velocity averaging applied to $\beta_\delta(f_n)$, followed by the entropy truncation estimate \eqref{eq/ beta err}, gives compactness of the bounded velocity moments; see \cite[Section III, Theorem 6]{DLM91}. The energy tails supplied by $\Phi$ then give strong convergence of $(\rho_{f_n},j_{f_n},E_{f_n})$ in $L^1((0,T)\times I)$, exactly as in Lemma \ref{lem/ mom2}. The argument of Lemma \ref{lem/ max L1} gives $M[f_n]\to M[f]$ in $L^1(0,T;X)$. After extraction,
\[
a_n\to a:=\lambda_\e(\rho_f)\quad\text{a.e.},\quad G_n:=a_nM[f_n]\to G:=aM[f]\quad\text{in }L^1(0,T;X).
\]
On $\{\rho_f>0\}$, the cutoff is eventually one along the converging moment vectors; on $\{\rho_f=0\}$, use $0\le a_n\le\rho_{f_n}$. The gain convergence follows from $a_n\le\frac1\e$.

To identify the full solution and its boundary traces, let $F$ solve the linear problem $\p_tF+v_1\p_xF+aF=G$ with the same initial and boundary data. Its existence follows from the linear construction above and the boundedness of $a$. Set $H_n=G_n-G-(a_n-a)F$. Dominated convergence gives $H_n\to0$ in $L^1(0,T;X)$, while
\[
\p_t(f_n-F)+v_1\p_x(f_n-F)=H_n-a_n(f_n-F),\quad (f_n-F)(0)=0.
\]
Estimate \eqref{eq/ lin tr} and Gr\"onwall's inequality imply $f_n\to F$ in $C([0,T];X)$ and convergence of both traces in $L^1((0,T)\times\R^3,\la v\ra^2|v_1|\,\dt\dv)$. Hence $F=f$, and the limit satisfies \eqref{eq/ regbgk} and the exact boundary condition. The collision term converges in $L^1(0,T;X)$, which also justifies the moment identities and the transport renormalizations.

Finally, lower semicontinuity gives \eqref{eq/ H in} or \eqref{eq/ H max}. The weighted strong convergence controls the negative entropy parts through \eqref{eq/ H neg}; Fatou's lemma applies to the nonnegative bulk production and to the Darroz\`es--Guiraud integrand \eqref{eq/ DG rel}. The outgoing fluxes converge strongly as a consequence of the trace convergence. Continuity in $X$ and lower semicontinuity of the entropy extend the inequalities to every terminal time. A diagonal extraction in $T$ gives the global solution $f_\e$. This completes the proof.
\end{proof}

%
%
%
%
%
%
%
%
%
%
%
\section{The mild formula}\label{app/ mild}

In this appendix, we derive the mild formulation along each free flight segment inside the slab. The argument applies to both \textbf{(In)} and \textbf{(Max)}. The difference lies only in the value prescribed at the backward endpoint of a characteristic.

Recall that $\gamma(z):=\log(1+z)$. In the inflow case, Section \ref{sec/ inflow limit} gives
\[
\p_t\gamma(f)+v_1\p_x\gamma(f) = \gamma'(f)\rho_f(M[f]-f)
\]
in the Green formulation, with incoming traces
\[
\gamma(g_l) \quad\text{at }x=-\frac12,\ v_1>0, \quad \gamma(g_r) \quad\text{at }x=\frac12,\ v_1<0.
\]
In the Maxwell case, Section \ref{sec/ maxwell limit} gives the same equation, with the incoming traces determined by the limiting Maxwell boundary condition \eqref{eq/ max l} and \eqref{eq/ max r}. The initial trace is $\gamma(f_0)$ in either case.

For $(x,v)\in I\times\R^3$, define the backward exit time
\[
\tau_b(x,v) :=
\begin{cases}
\displaystyle\frac{x+\frac12}{v_1}, &v_1>0,\\[8pt]
\displaystyle\frac{\frac12-x}{|v_1|}, &v_1<0,\\[8pt]
+\infty, &v_1=0,
\end{cases}
\]
and set
\[
s_b=s_b(t,x,v) := \max\{0,t-\tau_b(x,v)\}, \quad X(s):=x-v_1(t-s).
\]
Then $X(t)=x$, and, whenever $t>\tau_b(x,v)$,
\[
X(s_b) =
\begin{cases}
-\frac12,&v_1>0,\\
\frac12,&v_1<0.
\end{cases}
\]
The set $\{t=\tau_b(x,v)\}$ is negligible and will be disregarded.

For the inflow boundary condition, define
\bq\label{eq/ in datum}
F_b^{\rm in}(t,x,v) :=
\begin{cases}
f_0(X(0),v), &t<\tau_b(x,v),\\
g_l(s_b,v), &t>\tau_b(x,v),\ v_1>0,\\
g_r(s_b,v), &t>\tau_b(x,v),\ v_1<0.
\end{cases}
\eq
For the Maxwell boundary condition, define
\bq\label{eq/ max datum}
F_b^{\rm max}(t,x,v) :=
\begin{cases}
f_0(X(0),v), &t<\tau_b(x,v),\\
f_l(s_b,v), &t>\tau_b(x,v),\ v_1>0,\\
f_r(s_b,v), &t>\tau_b(x,v),\ v_1<0,
\end{cases}
\eq
where $f_l$ and $f_r$ are the limiting traces constructed in Section \ref{sec/ maxwell limit}. In particular, their incoming parts satisfy
\begin{align*}
f_l(t,v) &= (1-\alpha_l)f_l(t,\scrV v) +\alpha_l\frkM(v)J_l(t), \quad v_1>0,\\
f_r(t,v) &= (1-\alpha_r)f_r(t,\scrV v) +\alpha_r\frkM(v)J_r(t), \quad v_1<0.
\end{align*}

We write $F_b$ for either \eqref{eq/ in datum} or \eqref{eq/ max datum}, according to the boundary condition. Then, for almost every $(t,x,v)$, we claim
\bq\label{eq/ slab mild}
\begin{aligned}
f(t,x,v) ={}& F_b(t,x,v) \exp\lt( -\int_{s_b}^t \rho_f(\sigma,X(\sigma)) \,\tnd\sigma \rt)\\
&+ \int_{s_b}^t \exp\lt( -\int_s^t \rho_f(\sigma,X(\sigma)) \,\tnd\sigma \rt) \rho_f(s,X(s)) M[f](s,X(s),v) \,\ds.
\end{aligned}
\eq
The formula is justified as follows. Set
\[
w:=\gamma(f)=\log(1+f), \quad G:= \gamma'(f)\rho_f(M[f]-f) = \frac{\rho_f(M[f]-f)}{1+f}.
\]
By Proposition \ref{prop/ collision delta}, $G\in L^1((0,T)\times I\times B_R)$ for every $T,R<\infty$.

We first verify the integrability of $G$ and $\rho_f$ along almost every free flight segment. For fixed $T,R>0$, the change of variables $y=X(s)=x-v_1(t-s)$ has unit Jacobian in the $x$-variable. Since $s_b(t,x,v)\le s\le t$ implies $X(s)\in I$, we obtain
\[
\int_0^T \iint_{I\times B_R} \int_{s_b}^t |G(s,X(s),v)| \,\ds\dx\dv\dt  \le T \int_0^T \iint_{I\times B_R} |G(s,y,v)| \,\dy\dv\ds <\infty,
\]
and similarly
\[
\int_0^T \iint_{I\times B_R} \int_{s_b}^t \rho_f(s,X(s)) \,\ds\dx\dv\dt  \le T|B_R| \int_0^T\int_I \rho_f(s,y)\,\dy\ds <\infty.
\]
Equivalently, the integrands may be extended by zero outside the slab before performing the change of variables. We emphasize that the estimates above use only that the translated integration domain is contained in $I$.

It follows from Fubini's theorem that, for almost every $(t,x,v)$,
\bq\label{eq/ char int}
\int_{s_b}^t \lt( |G(s,X(s),v)| + \rho_f(s,X(s)) \rt) \ds <\infty.
\eq

The trace and renormalization theory for transport equations \cite[Theorems 4.4--4.5]{Mis10} now implies that, for almost every such free flight segment, the map $s\mapsto w(s,X(s),v)$ has an absolutely continuous representative and satisfies
\[
\frac{\dd}{\ds} w(s,X(s),v) = G(s,X(s),v)
\]
for almost every $s\in(s_b,t)$.

The value at the backward endpoint is
\[
w(s_b,X(s_b),v) = \log(1+F_b(t,x,v)).
\]
Indeed, if $s_b=0$, this is the initial trace. If $s_b>0$, it is the incoming boundary trace identified in Section \ref{sec/ inflow limit} for \textbf{(In)} and in Section \ref{sec/ maxwell limit} for \textbf{(Max)}. Hence
\bq\label{eq/ log char}
w(s,X(s),v) = \log(1+F_b(t,x,v)) + \int_{s_b}^s G(\sigma,X(\sigma),v) \,\tnd\sigma
\eq
for $s_b\le s\le t$.

By \eqref{eq/ char int}, the right-hand side of \eqref{eq/ log char} is bounded as a function of $s$ on $[s_b,t]$. Consequently,
\[
f(s,X(s),v) = e^{w(s,X(s),v)}-1
\]
is also bounded on the same segment. The ordinary chain rule for absolutely continuous functions gives
\bq\label{eq/ f char}
\frac{\dd}{\ds} f(s,X(s),v) = (1+f(s,X(s),v) ) G(s,X(s),v)  = \rho_f(s,X(s)) ( M[f](s,X(s),v) - f(s,X(s),v)  ) 
\eq
for almost every $s\in(s_b,t)$.

We also note that every term in \eqref{eq/ f char} is integrable on the free flight segment. Indeed, if
\[
C_{t,x,v} := \sup_{s_b\le s\le t} (1+f(s,X(s),v) ) <\infty,
\]
then
\[
\int_{s_b}^t \lt| \rho_f(M[f]-f) \rt|(s,X(s),v) \,\ds \le C_{t,x,v} \int_{s_b}^t |G(s,X(s),v)| \,\ds <\infty,
\]
while
\[
\int_{s_b}^t \rho_f(s,X(s)) f(s,X(s),v) \,\ds \le \lt( \sup_{s_b\le s\le t} f(s,X(s),v) \rt) \int_{s_b}^t \rho_f(s,X(s)) \,\ds <\infty.
\]
Thus
\[
\rho_f(s,X(s)) M[f](s,X(s),v) \in L^1(s_b,t).
\]

Equation \eqref{eq/ f char} may now be written as
\[
\frac{\dd}{\ds} f(s,X(s),v) + \rho_f(s,X(s)) f(s,X(s),v) = \rho_f(s,X(s)) M[f](s,X(s),v).
\]
Multiplying by the integrating factor
\[
\exp\lt( \int_{s_b}^s \rho_f(\sigma,X(\sigma)) \,\tnd\sigma \rt)
\]
and integrating from $s_b$ to $t$ gives exactly \eqref{eq/ slab mild}.

Thus the mild formula is valid on the last free flight segment for almost every $(t,x,v)$. In the inflow case, its backward datum is completely prescribed by $f_0$, $g_l$, and $g_r$. In the Maxwell case, the backward datum at a wall is the incoming trace $f_l$ or $f_r$, which has already been identified in Section \ref{sec/ maxwell limit} and satisfies the Maxwell boundary condition. Hence the mild formula is a consequence of the trace identification; it is not used to establish the Maxwell boundary law.

%
%
%
%
%
%
%
%
%
%
%

\section*{Acknowledgments}
The work was supported by NRF grant no. 2022R1A2C1002820 and RS-2024-00406821.

%
%
%
%
%
%
%
%
%
%
%

\bibliographystyle{abbrv} 
\bibliography{bgkslab_CS}

\end{document}